\theoremstyle{plain}\newtheorem{proposition}{Proposition}[section]
\theoremstyle{plain}\newtheorem{theorem}{Theorem}[section]
\theoremstyle{plain}\newtheorem{lemma}{Lemma}[section]
\theoremstyle{plain}\newtheorem{corollary}{Corollary}[section]
\theoremstyle{definition}\newtheorem{definition}{Definition}[section]
\newcommand{\mc}[1]{\mathcal{#1}} 		        % \mathcal{} 		(caligraphic)
\newcommand{\mbb}[1]{\mathbb{#1}} 		        % \mathbb{} 		(blackboard)
\newcommand{\wh}[1]{\widehat{#1}}               % \widehat{}
\NewDocumentCommand{\vc}{g}{%                   % vector/matrix symbols
    \IfNoValueTF{#1}
    {%
        {\boldsymbol c}
    }
    {%
        {\boldsymbol #1}
    }%
}
\newcommand{\bmtx}[1]{\begin{bmatrix} #1 \end{bmatrix}}     % alias for bracketed matrices
\newcommand{\iprod}[2]{\langle #1,\, #2 \rangle}            % real inner product
\newcommand{\bprod}[2]{( #1,\, #2 )}                        % complex inner product
\newcommand{\nrm}[1]{\|#1\|}                                % norm
\newcommand{\set}[1]{\{#1\}}                                % curly braces
\newcommand{\Lset}[1]{\left\{#1\right\}}                    % curly braces (large)
\newcommand{\bset}[1]{[#1]}                                 % braces
\def\sinc{\operatorname{sinc}}
\def\ind{\mbb{I}}
\def\proj{\operatorname{proj}}
\def\real{\operatorname{Re}}
\def\vspan{\operatorname{span}}
\def\dom{\operatorname{dom}}
\def\supp{\operatorname{supp}}
\def\dist{\operatorname{dist}}
\def\ev{\operatorname{E}}
\def\trace{\operatorname{trace}}
\def\diag{\operatorname{diag}}
\def\cvxhull{\operatorname{conv}}
\def\tint{\operatorname{int}}                       % interior
\def\cl{\operatorname{cl}}                          % closure
\def\wscl{\operatorname{w{\tiny\ast}-cl}}           % closure / weak-*
\def\sums{\sum\nolimits}
\def\prods{\prod\nolimits}
\def\sups{\sup\nolimits}
\def\infs{\inf\nolimits}
\def\lims{\lim\nolimits}
\def\limsups{\lim\sups}
\def\bigcaps{\bigcap\nolimits}
\def\bigcups{\bigcup\nolimits}
\def\opt{^{\star}}                                  % optimal
\def\adj{^{*}}                                      % adjoint
\def\mN{\mc{N}}                                     % normal distribution
\def\mH{\mc{H}}                                     % hilbert space
\def\alp{\alpha}                                    % \alpha
\def\th{\theta}                                     % \theta
\def\eps{\varepsilon}                               % \vaerpsilon
\def\vphi{\varphi}                                  % \varphi
\def\lam{\lambda}                                   % \lambda
\def\Sig{\Sigma}                                    % \Sigma
\def\Lam{\Lambda}                                   % \Lambda
\def\xio{\xi_0}                                     % specific frequency
\def\xo{x_0}                                        % specific point
\def\uo{u_{\Omo}}                                   % low-frequency approximation
\def\ue{u_{\Ome}}                                   % extrapolated
\def\uxo{u_{\xo}}
\def\vo{v_0}                                        % specific function
\def\mopt{m\opt}                                    % multiplier / optimal
\def\mf{m_f}
\def\etaf{\eta_f}
\def\fxo{f_{\xo}}
\def\uxoo{u_{\xo,\Omo}}
\def\uxoe{u_{\xo,\Ome}}
\def\cmu{c_{\mu}}                                   % lower constant / \mu
\def\Cmu{C_{\mu}}                                   % upper constant / \mu
\def\tS{\tau_{\Sig}}                                % FPI / constant
\def\tG{\tau_G}                                     % 
\def\Kd{K_{\Delta}}                                 % FPI / set
\def\Md{M_{\delta}}                                 % FPI / map
\def\D{D_{2}}                                       % scaling operator x2
\def\Da{D_{\alpha}}                                 % scaling operator x (\alpha)
\def\Dia{D_{\alpha^{-1}}}                           % scaling operator x (1/\alpha)
\def\vA{\vc{A}}                                     % data matrix
\def\vG{\vc{G}}                                     % gram matrix
\def\vGopt{\vG\opt}                                 % gram matrix / optimal
\def\vI{\vc{I}}                                     % identity matrix
\def\vC{\vc{C}}                                     % matrix variables
\def\vU{\vc{U}}
\def\vV{\vc{V}}                                     
\def\vW{\vc{W}}
\def\vX{\vc{X}}                                     
\def\vY{\vc{Y}}
\def\vZ{\vc{Z}}
\def\vLam{\vc{\Lam}}
\def\vSig{\vc{\Sigma}}
\def\vd{\vc{d}}                                     % matrix diagonal
\def\ve{\vc{e}}                                     % vector of exponentials
\def\vf{\vc{f}}                                     % vector of Fourier transforms
\def\vp{\vc{p}}                                     % projection
\def\vs{\vc{s}}                                     % diagonal values
\def\vv{\vc{v}}                                     % vector variables
\def\vx{\vc{x}}                                     
\def\vy{\vc{y}}
\def\vz{\vc{z}}
\def\lamX{\vc{\lambda}_X}                           % vector of eigenvalues
\def\whh{\wh{h}}
\def\whu{\wh{u}}
\def\whv{\wh{v}}
\def\whphi{\wh{\phi}}
\def\whpsi{\wh{\psi}}
\def\R{\mbb{R}}                                     % reals
\def\N{\mbb{N}}                                     % positive integers
\def\No{\mbb{N}_0}                                  % non-negative integers
\def\Z{\mbb{Z}}                                     % integers
\def\C{\mbb{C}}                                     % complex
\def\Rd{\R^d}                                       % R^d
\def\Rn{\R^n}                                       % R^n
\def\Rnp{\Rn_+}                                     % R^n / positive orthant
\def\Zd{\Z^d}                                       % Z^d
\def\Cd{\C^d}                                       % C^d
\def\Cn{\C^n}                                       % C^n
\def\Cnn{\C^{n\times n}}                            % square matrices
\def\Hnn{\mbb{H}^{n\times n}}                       % Hermitian matrices
\def\Unn{\mbb{U}(n)}                                % unitary matrices
\def\Sd{\mbb{S}^{d-1}}                              % d-sphere
\def\mX{\mc{X}}
\def\mQ{\mc{Q}}                                     % hypercube
\def\mQd{\mQ_d}                                     % hypercube / in Rd
\def\Omo{\Omega_0}                                  % low-frequency set
\def\Omor{\Omo^{\rho}}                              % low-frequancy set / erosion
\def\Ome{\Omega_{e}}                                % extended frequencies
\def\Omlo{\Omega_{\textrm{lo}}}                     % low-frequency set
\def\Omhi{\Omega_{\textrm{hi}}}                     % high-frequency set
\def\mU{\mc{U}}
\def\mF{\mc{F}}                                     % fourier data
\def\ZF{Z_{\mF}}                                    % fourier data / common zeros
\def\ZFr{\ZF^{\rho}}                                % fourier data / common zeros / neighborhood
\def\VF{V}                                          % fourier data / generated space
\def\SVF{S}                                         % fourier data / worst-case set
\def\SVFopt{\SVF\opt}                               % fourier data / worst-case set / subdiff
\def\CVF{C}                                         % fourier data / worst-case set / coords
\def\So{\SVF_0}                                     % fourier data / worst-case set / countable 
\def\BorSVF{\mc{B}(\SVF)}                           % fourier data / worst-case set / borel sigma-algebra
\def\PS{\mc{P}(\SVF)}                               % fourier data / worst-case set / probability measures on S
\def\Ewc{E_S}                                       % worst-case error
\def\Ewcopt{\Ewc^{\star}}                           % worst-case error / infimum
\def\LE{L^2}                                        % square-integrable
\def\LInf{L^\infty}                                 % essentially bounded
\def\LER{\LE(\R)}
\def\LERd{\LE(\Rd)}
\def\LEQR{\LE_{\mQ}(\R)}
\def\LEQRd{\LE_{\mQd}(\Rd)}
\def\LEOmo{\LE(\Omo)}
\def\LEOmor{\LE(\Omor)}
\def\LInfRd{\LInf(\Rd)}
\def\LInfOmo{\LInf(\Omo)}
\def\lE{\ell_2}
\def\lInf{\ell_\infty}
\def\lEn{\lE^n}
\def\lEp{\lE^p}
\def\BE{B_2}
\def\cBE{\bar{B}_2}
\def\BInf{B_{\infty}}
\def\cBInf{\bar{B}_{\infty}}
\def\cBEd{\cBE^d}
\def\cBInfd{\cBInf^d}
\newcommand{\absE}[1]{|#1|}
\newcommand{\LabsE}[1]{\left|#1\right|}
\newcommand{\eprod}[2]{\iprod{#1}{#2}}
\newcommand{\enrm}[1]{|#1|}
\newcommand{\bprodLE}[2]{\bprod{#1}{#2}_{\LE}}
\newcommand{\iprodLE}[2]{\iprod{#1}{#2}_{\LE}}      
\newcommand{\nrmLE}[1]{\nrm{#1}_{\LE}}
\newcommand{\nrmLInf}[1]{\nrm{#1}_{\LInf}}
\newcommand{\nrmVF}[1]{\nrm{#1}_{\VF}}
\newcommand{\bprodlEn}[2]{\bprod{#1}{#2}_{\lEn}}
\newcommand{\iprodlEn}[2]{\iprod{#1}{#2}_{\lEn}}
\newcommand{\nrmlEn}[1]{\nrm{#1}_{\lEn}}
\newcommand{\bprodlEp}[2]{\bprod{#1}{#2}_{\lEp}}
\newcommand{\nrmlEd}[1]{\nrm{#1}_{\lE^d}}
\newcommand{\bprodlEm}[2]{\bprod{#1}{#2}_{\lE^m}}
\newcommand{\nrmlEm}[1]{\nrm{#1}_{\lE^m}}
\newcommand{\nrmop}[1]{\nrm{#1}_{\mathrm{op}}}
\newcommand{\bprodF}[2]{\bprod{#1}{#2}_{F}}
\newcommand{\iprodF}[2]{\iprod{#1}{#2}_F}
\newcommand{\nrmF}[1]{\nrm{#1}_F}
\newcommand{\bprodS}[2]{\bprod{#1}{#2}_{\Sig}}
\newcommand{\iprodS}[2]{\iprod{#1}{#2}_{\Sig}}
\newcommand{\nrmS}[1]{\nrm{#1}_{\Sig}}
\title{Adaptive multipliers for extrapolation in frequency}
\date{\today}	    % Here you can change the date presented in the paper title
\author{
    \href{https://orcid.org/0009-0004-1247-2598}{\includegraphics[scale=0.06]{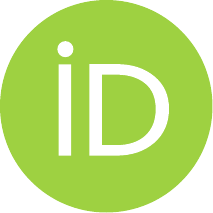}\hspace{1mm}Diego~Castelli~Lacunza} \\
	Institute for Mathematical and Computational Engineering\\
	Pontificia Universidad Cat\'olica de Chile\\
	Santiago, Chile \\
	\texttt{decastelli@uc.cl} \\
	\And
    \href{https://orcid.org/0000-0002-2533-2509}{\includegraphics[scale=0.06]{orcid.pdf}\hspace{1mm}Carlos~A.~Sing~Long} \\
        Institute for Mathematical and Computational Engineering and \\
        Institute for Biological and Medical Engineering \\
	Pontificia Universidad Cat\'olica de Chile\\
	Santiago, Chile \\
	\texttt{casinglo@uc.cl}
}
\begin{document}
\maketitle

\begin{abstract}
    Resolving the details of an object from coarse-scale measurements is a classical problem in applied mathematics. This problem is usually formulated as extrapolating the Fourier transform of the object from a bounded region to the entire space, that is, in terms of performing {\em extrapolation in frequency}. This problem is ill-posed unless one assumes that the object has some additional structure. When the object is compactly supported, then it is well-known that its Fourier transform can be extended to the entire space. However, it is also well-known that this problem is severely ill-conditioned.

    In this work, we assume that the object is known to belong to a collection of compactly supported functions and, instead performing extrapolation in frequency to the entire space, we study the problem of extrapolating to a larger bounded set using dilations in frequency and a single Fourier multiplier. This is reminiscent of the refinement equation in multiresolution analysis. Under suitable conditions, we prove the existence of a worst-case optimal multiplier over the entire collection, and we show that all such multipliers share the same canonical structure. When the collection is finite, we show that any worst-case optimal multiplier can be represented in terms of an Hermitian matrix. This allows us to introduce a fixed-point iteration to find the optimal multiplier. This leads us to introduce a family of multipliers, which we call \(\Sigma\)-multipliers, that can be used to perform extrapolation in frequency. We establish connections between \(\Sigma\)-multipliers and multiresolution analysis. We conclude with some numerical experiments illustrating the practical consequences of our results.
\end{abstract}

% keywords can be removed
\keywords{Fourier extrapolation \and Refinement equation \and Fourier multipliers \and Multiresolution analysis \and Adaptive filters}

%% --- FORMATTING
\newlength{\pskip}              % paragraph skip
\setlength{\pskip}{4pt}

\linespread{1.1}

%% ============================================================================
\parskip = 1pt
% \tableofcontents
%% ============================================================================

\parskip = \pskip

%% ============================================================================
\section{Introduction}
\label{sec:intro}
%% ============================================================================

Resolving the details of an object from coarse-scale measurements is a classical problem in applied mathematics, arising is problems as diverse as statistical downscaling for weather prediction~\cite{wan_debias_2023,wan_statistical_2024}, \(k\)-space extrapolation for Nuclear Magnetic Resonance (NMR) spectroscopy~\cite{haupt_removal_1996}, low-frequency extrapolation for seismic data in geophysics~\cite{sun_low-frequency_2018,chiu_low-frequency_2020,jia_learning_2024}, and Fourier extrapolation for the super-resolution of Synthetic Aperture Radar (SAR) images~\cite{brito_sar_2003,zhang_superresolution_2013}. Perhaps the most well-known application is super-resolution in optical imaging~\cite{toraldo_di_francia_resolving_1955,barnes_object_1966} particularly the super-resolution of point-sources~\cite{candes_super-resolution_2013,candes_towards_2014,fernandez-granda_super-resolution_2016}.

This problem is typically modeled using the Fourier transform of a function \(u\) on \(\Rd\) representing the underlying object. The coarse-scale measurements are often modeled as the restriction of the Fourier transform to a bounded {\em low-frequency set}, representing the {\em low-frequency content} about \(u\), whereas the details correspond to the values of its Fourier transform outside this set. In other words, it is a problem of extrapolation in the frequency domain. We informally call this {\em extrapolation in frequency} (see also~\cite{benedetto_super-resolution_2020}).

It is clear that this problem is ill-posed. In fact, we may impute the missing values for the Fourier transform \(\whu\) of \(u\) using {\em any} method while remaining consistent with its low-frequency content. However, and perhaps surprisingly, if we assume that \(u\) belongs to a specific class of functions, then \(u\) is uniquely characterized in this class by its low-frequency content.

The most important results along these lines are based on {\em analytic continuation}. If \(u\) is compactly supported, then the celebrated Paley-Wiener-Schwartz theorem implies that its Fourier transform is the restriction of an entire function of exponential type to the reals (cf.~\cite[Theorem~10]{paley_fourier_1934} and~\cite[Theorem~7.3.1.]{hormander_analysis_2003}). If \(d = 1\) then the {\em uniqueness principle} for analytic functions~\cite[Chapter~V]{gamelin_complex_2001} implies that \(u\)
is uniquely characterized in this class by its low-frequency content if the low-frequency set contains an accumulation point. If \(d > 1\) then the Fourier transform of \(u\) is real-analytic on \(\Rd\). In this case, if the low-frequency set is an open set, then \(u\) is uniquely characterized in this class by its low-frequency content~\cite[cf. Section~4.1]{krantz_primer_2002}.

Although impressive, extrapolating the low-frequency content is an ill-conditioned problem. At first glance, this seems to be a consequence of the fact that we need the derivatives of all orders at a point to evaluate a power series. However, the series of publications by D.~Slepian, H.~O.~Pollak and H.~E.~Landau shows that this is not the case, while yielding insight into the ill-conditioning of this problem~\cite{slepian_prolate_1961,landau_prolate_1961,landau_prolate_1962,slepian_prolate_1964,slepian_prolate_1978}. For brevity, we present their main results for \(d = 1\). Let \(\Delta x, \Delta \xi > 0\) and let \(\mQ\) and \(\Omo\) be centered closed intervals of length \(\Delta x\) and \(\Delta \xi\) respectively. Denote as \(\LEQR\) the closed subspace of \(\LER\) consisting of functions that vanish on \(\mQ^c\). As argued before, the Fourier transform \(\whu\) of any \(u\in \LEQR\) is uniquely characterized by \(\wh{u}|_{\Omo}\). In particular, there exists a strictly decreasing sequence \(\set{\lam_n}_{n\in\No}\) of positive numbers strictly bounded above by \(1\) and an orthonormal collection \(\set{\psi_n}_{n\in \No}\) in \(\LEQR\) such that
\begin{equation}\label{eq:intro:slepianExpansion}
    \whu = \sums_{n\in\No} \frac{1}{\lam_n} \left(\int_{-\Delta\xi/2}^{\Delta\xi/2} \psi_n(\xi)^* \whu(\xi)\, d\xi \right) \psi_n.
\end{equation}
This expansion yields a closed-form representation for \(\whu\) in terms of \(\whu|_{\Omo}\). However, as the sequence \(\set{\lam_n}_{n\in\No}\) tends to zero, it is not continuous with respect to \(\whu|_{\Omo}\). In fact, for any \(n\in\No\) the pair \((\lam_n,\psi_n)\) is an eigenpair of the compact integral operator \(K : \LER \to \LER\)
\[
    K\psi_n(\xi) = \Delta x \int_{-\Delta\xi/2}^{\Delta\xi/2} \sinc(\Delta x (\xi - \xi')) \psi_n(\xi')\, d\xi'.
\]
It can be shown that the sequence \(\set{\lam_n}_{n\in\No}\) has a sharp transition near \(n \approx \Delta x \Delta\xi\) after which the values \(\lam_n\) are exponentially small. This quantitative bound shows that the representation~\eqref{eq:intro:slepianExpansion} is not stable and that, even if we assume that \(u\in \LEQR\), the problem of extrapolation in frequency remains ill-posed. We note that, although we have presented these results in \(d=1\), these arguments were extended to the case \(d > 1\) in~\cite{slepian_prolate_1964} with a focus on restrictions to disks for \(d = 2\). Although it is not the focus of our work, this construction can also be extended to discrete sequences~\cite{slepian_prolate_1978}.

The lack of stability of the representation in~\eqref{eq:intro:slepianExpansion} often precludes its use in practical applications unless the number of terms in the expansion is truncated up to a point where the extrapolation in frequency is modest~\cite{rushforth_restoration_1968,gerchberg_super-resolution_1974}. For this reason, R.~W.~Gerchberg and A.~Papoulis proposed independently an algorithm that could perform extrapolation in frequency while remaining stable to perturbations in the data~\cite{gerchberg_super-resolution_1974,papoulis_new_1975}. This stability should be interpreted in the sense that the algorithm produces an approximation that is be accurate up to a frequency inversely proportional to the magnitude of the perturbation. Their algorithm is an instance of the {\em Projection Onto Convex Sets (POCS) algorithm}~\cite{sezan_image_1983}. If we let \(C\subset \LER\) be the closed affine set
\[
    C = \set{v\in \LER:\,\, \whv|_{\Omo} = \whu|_{\Omo}}
\]
and we let \(\proj_{\LEQR} : \LER \to\LER\) and \(\proj_{C} : \LER\to \LER\) denote their orthogonal projectors, then the {\em Gerchberg-Papoulis (GP) algorithm} yields the sequence \(\set{v^{(k)}}_{k\in\No}\) given by
\[
    v^{(k+1)} = \proj_{\LEQR}(\proj_C(v^{(k)}))
\]
from a known \(v^{(0)}\in \LER\). The sequence \(\set{v^{(k)}}_{k\in\No}\) converges weakly to a limit point in \(C\cap \LEQR\)~\cite[Corollary~5.23]{bauschke_convex_2011}. Since \(u\) is the only point in this intersection, we conclude that \(v^{(k)} \to u\) weakly as \(k\to\infty\) (see also~\cite{papoulis_new_1975}).
The iteration can be written equivalently as
\[
    \whv^{(k+1)} = h_{\mQ} \ast (\chi_{\Omo^c}\whv^{(k)} + \chi_{\Omo} \whu)
\]
where \(h_{\mQ}\) is the Fourier transform of \(\chi_{\mQ}\). As critical property of this iteration is that \(\whv^{(k)} = h_{\mQ} \ast \whv^{(k)}\) for \(k\in\N\). It follows from this that the GP algorithm is equivalent to using gradient descent to solve (see~\cite{jain_extrapolation_1981} for the discrete case)
\[
    \begin{aligned}
        & \underset{}{\text{minimize}}
        & & \frac{1}{2}\nrmLE{\chi_{\Omo}(h_{\mQ}\ast \whv - \whu)}^2
        & \text{subject to}
        & &\whv\in \LER.
    \end{aligned}
\]
Hence, the stability of the GS algorithm can be explained in terms of the underlying variational problem that it solves, that is, the algorithm will find the best trade-off between a function that is concentrated on \(\mQ\) and one that matches the low-frequency content \(\whu|_{\Omo}\).

From these results, we infer that extrapolation in frequency {\em can} be stable if we are willing to restrict ourselves to a specific class of functions and to extrapolate up to a finite frequency. This leads us to consider the following formulation. Let \(\alpha > 1\). Given a collection of functions \(\mU\) in \(\LEQR\) we seek a map \(T\) that extrapolates in frequency from \(\Omo\) to \(\alpha\Omo\), that is, for which
\begin{equation}\label{eq:intro:extrapolationAsDilation}
    u\in\mU :\,\, T(\whu|_{\Omo}) = \whu|_{\alpha\Omo}.
\end{equation}
This condition can be written in terms of the {\em dilation operator}
\[
    \Da \whu(\xi) = \whu(\alpha\xi)
\]
as
\[
    u\in\mU :\,\, T(\whu|_{\Omo}) = (\Da\whu)|_{\Omo}.
\]
This is a commutation relation: the right-hand side is the composition between the map \(T\) and the restriction to \(\Omo\) whereas the left-hand side is the restriction to \(\Omo\) composed with the dilation \(\Da\). Our key observation is that for a {\em single} \(u\) there exists a {\em multiplier} that satisfies the above, namely,
\[
    T(\whu|_{\Omo}) = m \whu|_{\Omo} \quad\mbox{for}\quad m =\frac{\Da\whu|_{\Omo}}{\whu|_{\Omo}}.
\]
This multiplier is characterized by the equation
\[
    \Da\whu|_{\Omo} = m \whu|_{\Omo}
\]
which is reminiscent of the {\em refinement equation}~\cite{de_boor_construction_1993} in multiresolution analysis~\cite{mallat_multiresolution_1989,meyer_ondelettes_1997}. Although it is unlikely that a multiplier will work for all the elements in the collection, it is of interest to understand the limits of this approach. In fact, a multiplier that performs well must encode information about the structure of the low-frequency content for the entire collection \(\mU\). Motivated by this observation, our goal is to study the extent to which one can construct an {\em optimal multiplier} in a suitable sense so that~\eqref{eq:intro:extrapolationAsDilation} holds approximately.

%% ============================================================================
\subsection{Contributions}
%% ============================================================================

In this work, given a class of function \(\mU\) of functions in \(\LERd\) compactly supported in the unit cube \(\mQ\), a low-frequency set \(\Omo\), and a factor \(\alpha > 1\), we aim to find a suitable multiplier \(m\) defined on \(\Omo\) such that
\[
    u\in\mU:\,\, m \whu|_{\Omo} \approx \Da \whu|_{\Omo}
\]
holds in a suitable sense. Our contributions are as follows.
\begin{enumerate}[leftmargin=*,label=(\roman*)]
    \item{{\bf Optimal multipliers in the worst-case:} Under suitable definitions of a {\em worst-case approximation error} we can prove the existence of an {\em optimal multiplier}. Furthermore, we show that these optimal multipliers have a canonical structure similar to a pointwise projection.
    }
    \item{{\bf \(\Sigma\)-multipliers:} When \(\mU\) is finite we show that the structure optimal multiplier obtained for a variety of choice of worst-case error is always the same. This family of multipliers, which we call \(\Sigma\)-multipliers, can be parameterized in terms of an Hermitian matrix of a size proportional to the size of \(\mU\).
    }
    \item{{\bf Implementation:} We introduce a fixed-point iteration to find the optimal multiplier when \(\mU\) is finite. We prove that this iteration converges to the optimal multiplier for a wide range of definitions for the worst-case error. We provide numerical evidence that the iteration still converges when the integrals involved are replaced by a numerical approximation.
    }
    \item{{\bf Connection to multiresolution:} As our approach is inspired by multiresolution analysis, we show that a suitable choice of \(\Sigma\)-multiplier induces a multiresolution. In the case of an optimal multiplier, we can think about this as a form of multiresolution {\em adapted} to \(\mU\). This adaptation is distinct from that introduced in~\cite{han_adaptive_2011}.
    }
\end{enumerate}

%% ============================================================================
\subsection{Structure}
%% ============================================================================

The paper is organized as follows. In Section~\ref{sec:adaptiveMultipliers} we present our main theoretical results. We first introduce a suitable topology for the space of Fourier transforms of the elements of \(\mU\) to then define a suitable notion of the worst-case error. We prove the existence of optimal multipliers minimizing the worst-case error, and we determine their canonical structure. In Section~\ref{sec:adaptiveMultipliersFiniteCollection} we consider the case of a finite collection \(\mU\) and we show that the family of possible optimal multipliers can be represented in terms of a Hermitian matrix. We call these \(\Sigma\)-multipliers. This allows us to introduce a suitable fixed-point iteration to find the optimal multiplier. We prove the convergence of this iteration under mild assumptions. In Section~\ref{sec:connectionsToMultiresolution} we explore the connection between \(\Sigma\)-multipliers and multiresolutions. In particular, we provide conditions under which such a multiplier induces a multiresolution. In Section~\ref{sec:numericalExperiments} we present numerical experiments illustrating two applications. Finally, in Section~\ref{sec:conclusion} we conclude with a brief discussion about some directions for future research.

%% ============================================================================
\section{Preliminaries}
%% ============================================================================

The underlying spaces that we will use in this work are assumed to be real vector spaces. This will be the case even when they comprise complex-valued functions. If \(\mH\) is a complex Hilbert space endowed with the complex inner product \(\bprod{\cdot}{\cdot}_{\mH}\) then \(\mH\) can be reduced to a real Hilbert space with the real inner product \(\iprod{\cdot}{\cdot}_{\mH} := \real(\bprod{\cdot}{\cdot}_{\mH})\). For example, the canonical complex inner product on \(\LERd\) is
\[
    \bprodLE{u}{v} = \int u(x)^* v(x)\, dx
\]
whereas the corresponding real inner product is
\[
    \iprodLE{u}{v} = \real\left(\int u(x)^* v(x)\, dx\right).
\]
Given a collection \(\mF\) of complex-valued functions defined on a set \(S\) we denote as \(\vspan(\mF)\) the space generated by their linear combinations. If \(\mH\) is a Hilbert space and \(S\subset \mH\) then we denote its closure with respect to the norm (or uniform) topology as \(\cl(S)\). Similarly, we denote its closure with respect to the weak-* (or weak) topology as \(\wscl(S)\).

We denote as \(\bar{\R}\) the set of extended real numbers. Let \(X\) be any real vector space, let \(S\subset X\) and let \(f: X\to \bar{\R}\). The {\em convex hull} of \(S\) is the set of all finite convex combinations of elements of \(S\) and it is denoted as \(\cvxhull(X)\). We say that \(f\) is {\em proper} if it does not take the value \(-\infty\) and we say that \(f\) is {\em convex} if \(f(\th x + (1-\th) y) \leq \th f(x) + (1-\th) f(y)\) for any \(x,y\in X\) and \(\th\in [0, 1]\). Its {\em domain} \(\dom(f)\) is the subset of \(X\) on which it does not take the value \(+\infty\).

In \(\Cn\) we let \(\BE\) and \(\cBE\), and \(\BInf\) and \(\cBInf\) denote the open and closed unit balls in the \(\lE\)-norm and \(\lInf\)-norm respectively. Finally, we denote \(\mX_n = \set{1,\ldots, n}\).

%% ============================================================================
\section{Adaptive multipliers for extrapolation in frequency}
\label{sec:adaptiveMultipliers}
%% ============================================================================

Let \(\LEQRd\) be the closed subspace of \(\LERd\) consisting of functions with support on the unit cube \(\mQd\). We make the choice of the unit cube for simplicity as our results still hold if we replace \(\mQd\) for any compact set with non-empty interior. Let~\(\mU := \set{u_{\gamma}:\, \gamma\in \Gamma}\) be either a finite or countable collection of linearly independent functions in \(\LEQRd\) contained in a bounded set. These represent the class of objects of interest. To simplify the notation and the exposition, denote by \(\mF := \set{f_{\gamma}:\, \gamma \in \Gamma}\) the collection of Fourier transforms of functions in \(\mU\). Since these are Fourier transforms of compactly supported functions, they belong to \(\LERd\) and, by the Payley-Wiener-Schwartz theorem, they are the restrictions to \(\Rd\) of entire functions of exponential type in \(\Cd\)~\cite[Theorem~4.9]{stein_introduction_1975}. From the uniqueness principle it follows that non-trivial linear combinations of elements in \(\mF\) cannot vanish on an open set in~\(\Cd\)~\cite[Theorem~1.19]{range_holomorphic_1986}. Note that this does not directly imply that this result holds for open sets in \(\Rd\) as it has empty interior in \(\Cd\). However, it follows that the functions in \(\mF\) are real-analytic on \(\Rd\). From this, non-trivial linear combinations of functions in \(\mF\) cannot vanish on an open subset of \(\Rd\)~\cite[cf.~Section~4.1]{krantz_primer_2002}.

Let \(\Omo \subset \Rd\) be a compact set with non-empty interior. From now on this will be the set associated to {\em low-frequencies}. Typical choices are \(\Omo = \cBInfd\), \(\Omo = \cBEd\) and angular sectors of the form
\[
    \Omo := \set{\xi\in\Rd:\,\, \eprod{\omega}{\xi} \geq \theta\enrm{\xi},\,\, r_{\min} \leq \enrm{\xi} \leq r_{\max}}
\]
for some \(\theta\in (0, 1)\), \(\omega\in\Sd\) and \(0 < r_{\min} < r_{\max}\).

The {\em low-resolution content} of a function \(f\in\mF\) is represented by the restriction \(f|_{\Omo}\) of \(f\) to \(\Omo\). Let \(\alp > 1\). In this work, {\em extrapolation in frequency by a factor \(\alp\)} implies finding an approximation of \(f|_{\alp\Omo}\) that yields an approximation of \(f|_{\alp\Omo\cup\Omo}\). This motivates defining the set \(\Ome = \Omo\cup \alp\Omo\) of {\em extrapolated frequencies}. Although strictly speaking only the values at \(\alp\Omo \setminus\Omo\) are being extrapolated, this distinction will not play any role in our theoretical developments. However, they will play a role in our numerical experiments in Section~\ref{sec:numericalExperiments}.

Approximating \(f|_{\alp\Omo}\) from \(f|_{\Omo}\) can be done in many different ways. We focus specifically on approximating \(f|_{\alp\Omo}\) as best as possible for each \(f\in \mF\) using a single multiplier \(m\), i.e.,
\[
    \forall\, f\in\mF,\, \xi \in\Omo:\quad  m(\xi) f(\xi) = f(\alpha\xi).
\]
{\em If} such a multiplier exists, we can extrapolate \(f|_{\Omo}\) to \(\alp\Omo\) as
\[
    f\in\mF,\,\xi\in \alp\Omo:\quad f|_{\alp\Omo}(\xi) = m\left(\frac{1}{\alp}\xi\right) f|_{\Omo}\left(\frac{1}{\alp}\xi\right)
\]
which is reminiscent of the well-known {\em refinement equation}~\cite{daubechies_orthonormal_1988,christensen_introduction_2016}.

This can be interpreted in terms of a convolution in the space domain. Consider first the decomposition
\[
    \xi \in \Ome:\,\, f|_{\Ome}(\xi) = f|_{\Omo}(\xi) + \chi_{\alp\Omo\setminus\Omo}(\xi) m\left(\frac{\xi}{\alp}\right) f|_{\Omo}\left(\frac{\xi}{\alp}\right).
\]
It is now useful to define formally the {\em dilation by \(\alp\)}
\[
    \Da f(\xi) := f(\alp\xi).
\]
If we let \(\uo\) and \(\ue\) be the inverse Fourier transform of \(f\chi_{\Omo}\) and \(f\chi_{\Ome}\) respectively, and if we let \(\eta\) be the inverse Fourier transform of \(\chi_{\Omo\setminus \alp^{-1}\Omo} m\) then the above implies that
\[
    \ue = \uo + \alp^d \Da (\eta\ast \uo).
\]
Therefore, the multiplier allows us to determine the details \(\alp^d \Da (\eta\ast \uo)\) missing in \(\uo\) to obtain \(\ue\).

Our main goal is then to find a multiplier \(m: \Omo\to \C\) such that
\begin{equation}
\label{eq:multiplierConditionOnF}
    \forall\, f\in\mF,\, \xi \in\Omo:\quad  m(\xi) f(\xi) \approx \Da f(\xi)
\end{equation}
in a suitable sense. To ensure that the resulting convolution operator is continuous from \(\LERd\) to itself, we first restrict ourselves to multipliers in \(\LInfOmo\).

%% ============================================================================
\subsection{The induced space}
%% ============================================================================

We endow the space generated by the functions in \(\mF\) with a suitable topology. Let \(\nrmVF{\cdot}\) be a norm on \(\vspan(\mF)\) such that
\begin{equation}
\label{eq:conditionOnVNorm}
    \forall\, v\in \vspan(\mF):\,\, \nrmLE{v} + \sups_{\xi\in \Omo} |v(\xi)| \leq \nrmVF{v}
\end{equation}
and let \(\VF\) be the completion of \(\vspan(\mF)\) under this norm. It is apparent that elements of \(\VF\) belong to \(\LERd\) and that their restrictions to \(\Omo\) are continuous. By construction, the canonical inclusions
\[
    \VF \hookrightarrow \LInfOmo \quad\mbox{and}\quad \VF \hookrightarrow \LERd
\]
are continuous. The map \(\Da:\LERd\to \LERd\) given by
\[
    \Da v(\xi) = v(\alpha\xi)
\]
induces a continuous map \(\Da:\VF \to \LERd\) and any \(m\in\LInfOmo\) defines a continuous linear map \(m:\VF \to \LERd\). Therefore, our problem reduces to finding a multiplier \(m\in \LInfOmo\) for which
\[
    v\in \VF:\,\, \Da v \approx mv.
\]

%% ============================================================================
\subsection{The worst-case error}
%% ============================================================================

Let \(\mu\) be a measure on \(\Rd\) for which there exist constants \(\cmu, \Cmu > 0\) such that
\[
    v\in \VF:\,\, \cmu \nrmLE{v}^2 \leq \int |v(\xi)|^2\, d\mu(\xi) \leq \Cmu \nrmLE{v}^2.
\]

To quantify the discrepancy between \(mv\) and \(\Da v\) for a given \(v\in\VF\) and \(m\in\LInfOmo\) we define the {\em approximation error} as
\[
    E(v, m) = \frac{1}{2}\int_{\Omega_0} |\Da v(\xi) - m(\xi) v(\xi)|^2\, d\mu(\xi).
\]
\begin{proposition}\label{prop:approximationErrorDifferentiable}
    The approximation error
    \[
        E : \VF\times\LInfOmo \to \R
    \]
    is continuous, it is separately convex, and it is Fr\'echet differentiable with its derivative characterized by
    \begin{equation}\label{eq:approximationErrorDerivative}
        dE_{(v,m)}(\Delta v, \Delta m) = \real\left(\int_{\Omo}(\Da v(\xi) - m(\xi) v(\xi))^*  (\Da \Delta v(\xi) - m(\xi) \Delta v(\xi) -  v(\xi) \Delta m(\xi)) d\mu(\xi)\right)
    \end{equation}
    for \(\Delta v\in \VF\) and \(\Delta m \in \LInfOmo\).
\end{proposition}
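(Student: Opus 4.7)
The plan is to view $E$ as a quadratic functional obtained by composing the squared $L^{2}(\Omo,\mu)$-norm with a map that is affine in each of its two arguments; separate convexity, Fr\'echet differentiability, and continuity then all follow from a standard expansion. For separate convexity I fix $m\in\LInfOmo$ and note that $v\mapsto \Da v - mv$ is linear from $\VF$ into $L^{2}(\Omo,\mu)$, so $v\mapsto E(v,m)=\tfrac{1}{2}\|\Da v - mv\|_{L^{2}(\Omo,\mu)}^{2}$ is convex as a squared norm composed with a linear map. Symmetrically, for fixed $v\in\VF$ the map $m\mapsto \Da v - mv$ is affine, so $m\mapsto E(v,m)$ is convex for the same reason.

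For the differential I would expand
\[
    \Da(v+\Delta v) - (m+\Delta m)(v+\Delta v) = A + R_{1} - \Delta m\,\Delta v,
\]
where $A = \Da v - mv$, $R_{1} = \Da \Delta v - m\Delta v - v\Delta m$ is the part linear in $(\Delta v,\Delta m)$, and $-\Delta m\,\Delta v$ is the bilinear remainder. Squaring and integrating against $\mu$ on $\Omo$ yields
\[
    E(v+\Delta v,m+\Delta m) - E(v,m) = \real\!\int_{\Omo}\! A^{*}R_{1}\,d\mu + \mathcal{R},
\]
where $\mathcal{R}$ collects $\tfrac{1}{2}\int_{\Omo}|R_{1}|^{2}d\mu$, the two cross terms $-\real\int_{\Omo}A^{*}\Delta m\,\Delta v\,d\mu$ and $-\real\int_{\Omo}R_{1}^{*}\Delta m\,\Delta v\,d\mu$, and $\tfrac{1}{2}\int_{\Omo}|\Delta m\,\Delta v|^{2}d\mu$. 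The first integral is exactly formula~\eqref{eq:approximationErrorDerivative}.

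To conclude I would bound every piece of $\mathcal{R}$ by a constant (depending on $v$ and $m$) times $(\nrmVF{\Delta v}+\nrmLInf{\Delta m})^{2}$, and check that $(\Delta v,\Delta m)\mapsto \real\int_{\Omo} A^{*}R_{1}\,d\mu$ is linear and bounded on $\VF\times\LInfOmo$. Both rely on Cauchy--Schwarz in $L^{2}(\Omo,\mu)$ together with the continuous embeddings $\VF\hookrightarrow\LERd$ and $\VF\hookrightarrow\LInfOmo$ furnished by~\eqref{eq:conditionOnVNorm}, combined with the upper bound $\int|w|^{2}d\mu\leq \Cmu\nrmLE{w}^{2}$ on $\VF$. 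Typical factors that appear are $\|m\Delta v\|_{L^{2}(\Omo,\mu)}\leq \nrmLInf{m}\sqrt{\Cmu}\,\nrmLE{\Delta v}$ and $\|v\Delta m\|_{L^{2}(\Omo,\mu)}\leq \nrmLInf{\Delta m}\sqrt{\Cmu}\,\nrmLE{v}$, while the $\Da$-contributions are handled via the continuity of $\Da:\VF\to\LERd$ and the corresponding $L^{2}(\Omo,\mu)$-estimate. Fr\'echet differentiability at $(v,m)$ then follows, and continuity of $E$ is an immediate corollary.

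The main obstacle is purely bookkeeping: tracking which norm controls each factor so that every term in $\mathcal{R}$ carries at least two small quantities. The only slightly delicate term is $\int_{\Omo}|\Da\Delta v|^{2}d\mu$, since $\Da\Delta v$ is not itself in $\VF$ and the equivalence for $\mu$ was stated only on $\VF$; this is handled via the boundedness of $\Da$ on $\LERd$ together with the implicit compatibility between $\mu$ and Lebesgue measure needed for $E(v,m)$ to be finite in the first place.
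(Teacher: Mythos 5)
Your proposal is correct and follows essentially the same route as the paper: expand the quadratic functional around $(v,m)$, identify the linear part $\real\int_{\Omo}A^{*}R_{1}\,d\mu$ as the candidate derivative, bound the higher-order remainder by $O\bigl((\nrmVF{\Delta v}+\nrmLInf{\Delta m})^{2}\bigr)$ via Cauchy--Schwarz and the embeddings from~\eqref{eq:conditionOnVNorm}, and deduce continuity from differentiability. Your remark about applying the $\mu$-versus-Lebesgue comparison to $\Da\Delta v$ (which need not lie in $\VF$) is a subtlety the paper's proof also relies on implicitly, so flagging it is appropriate rather than a defect.
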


\begin{proof}[Proof of Proposition~\ref{prop:approximationErrorDifferentiable}]
    \parskip = \pskip

    It is apparent that \(E\) is separately convex. We first prove that~\eqref{eq:approximationErrorDerivative} defines a continuous linear application on \(\VF\times \LInfOmo\). Let \(C_\alpha > 0\) be an upper bound on the operator norm of \(\Da:\VF\to \LERd\). Let \((v,m), (\Delta v,\Delta m)\in \VF\times \LInfOmo\) and write
    \[
        r = \Da v - m v\quad\mbox{and}\quad \Delta r = \Da \Delta v - m \Delta v -  v \Delta m.
    \]
    We have the bounds
    \[
        \int_{\Omo} |r(\xi)|^2\, d\mu(\xi) \leq \Cmu \int_{\Omo} |r(\xi)|^2\, d\xi \leq 2\Cmu C_{\alpha}\nrmVF{v}^2 + 2\Cmu \nrmVF{m}^2 \nrmVF{v}^2
    \]
    and
    \[
        \int_{\Omo} |\Delta r(\xi)|^2\, d\mu(\xi) \leq \Cmu \int_{\Omo} |\Delta r(\xi)|^2\, d\xi \leq 2\Cmu C_{\alpha}\nrmVF{\Delta v} + 2 \Cmu\nrmLInf{m}^2\nrmVF{\Delta v}^2 + 2 \Cmu\nrmVF{v}^2\nrmLInf{\Delta m}^2.
    \]
    Therefore,
    \[
        |dE_{(v,m)}(\Delta v,\Delta m)| \leq 2\Cmu\left(\int_{\Omo} |r(\xi)|^2\, d\mu(\xi)\right)((C_{\alpha} + \nrmLInf{m})\nrmVF{\Delta v} + 2\nrmVF{v}^2\nrmLInf{\Delta m}^2).
    \]
    from where the claim follows. We now prove differentiability from where continuity will follow from~\cite[Theorem~7.1.4]{brown_elements_1970}. Let \((v_0,m_0), (v,m)\in \VF\times \LInfOmo\) and let \(\Delta v = v - v_0\) and \(\Delta m = m - m_0\). Let \(r\) and \(\Delta r\) be as defined before. Then
    \begin{multline*}
        E(v, m) - E(v_0, m_0) - dE_{(v_0,m_0)}(\Delta v, \Delta m) = \frac{1}{2}\int_{\Omo} |\Delta m(\xi)|^2 |\Delta v(\xi)|^2\, d\mu(\xi) \\
        +\:\real\left(\int_{\Omo} r(\xi)^* \Delta m(\xi)\Delta v(\xi)\, d\mu(\xi)\right) + \real\left(\int_{\Omo} \Delta r(\xi)^* \Delta m(\xi)\Delta v(\xi)\, d\mu(\xi)\right)\\
        \leq \frac{1}{2}\int_{\Omo} |\Delta m(\xi)|^2 |\Delta v(\xi)|^2\, d\mu(\xi) \\
        +\: \left(\left(\int_{\Omo} |r(\xi)|^2\, d\mu(\xi)\right)^{1/2} + \left(\int_{\Omo} |\Delta r(\xi)|^2\, d\mu(\xi)\right)^{1/2}\right)\left(\int_{\Omo} |\Delta m(\xi)|^2 |\Delta v(\xi)|^2\, d\mu(\xi)\right)^{1/2}
    \end{multline*}
    Furthermore, from
    \[
        \int_{\Omo} |\Delta m(\xi)|^2 |\Delta v(\xi)|^2\, d\mu(\xi) \leq \Cmu \nrmLInf{\Delta m}^2 \nrmVF{\Delta v}^2 \leq \frac{1}{2}\Cmu \nrmLInf{\Delta m}^4 + \frac{1}{2}\Cmu\nrmVF{\Delta v}^4.
    \]
    it is apparent that for \(\delta > 0\) there exists a constant \(C_\delta > 0\) such that
    \[
        \nrmLInf{m - m_0} + \nrmVF{v - v_0} < \delta
    \]
    then
    \[
        |E(v, m) - E(v_0, m_0) - dE_{(v_0,m_0)}(\Delta v, \Delta m)| = C_\delta \nrmLInf{m - m_0}^2 + C_\delta \nrmVF{v - v_0}^2.
    \]
    We conclude that \(E\) is Fréchet differentiable at \((v_0,m_0)\) with Fréchet derivative \(dE_{(v_0,m_0)}\). Since \((v_0,m_0)\) was arbitrary, we conclude that \(E\) is Fréchet differentiable.
\end{proof}

To evaluate the overall approximation error that \(m\) incurs, we fix a compact subset \(\SVF\) in \(\VF\) to evaluate the {\em worst-case error} over this set
\[
    \Ewc(m) = \sups_{v\in \SVF}\, E(v, m).
\]
The regularity of the approximation error endows the worst-case error of the following properties which will be critical when proving the existence of minimizers.

\begin{proposition}
\label{prop:wcErrorIsProperClosedConvex}
    The worst case error \(\Ewc\) is a continuous convex function with \(\dom(\Ewc) = \LInfOmo\).
\end{proposition}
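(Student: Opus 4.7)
The plan is to verify the three claims --- that $\dom(\Ewc) = \LInfOmo$, that $\Ewc$ is convex, and that $\Ewc$ is continuous --- in that order. All three follow from Proposition~\ref{prop:approximationErrorDifferentiable}, the compactness of $\SVF$ in $\VF$, and one standard fact from convex analysis.

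For the \textbf{domain}, I would fix an arbitrary $m \in \LInfOmo$ and note that by Proposition~\ref{prop:approximationErrorDifferentiable} the function $v \mapsto E(v, m)$ is continuous on $\VF$, so its supremum on the compact set $\SVF$ is attained and finite. Hence $\dom(\Ewc) = \LInfOmo$.

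For \textbf{convexity}, I would use the separate convexity of $E$ provided by Proposition~\ref{prop:approximationErrorDifferentiable}: for each fixed $v$, the map $m \mapsto E(v,m)$ is convex on $\LInfOmo$, since pointwise in $\xi$ the integrand is a nonnegative quadratic in $m(\xi)$. As a pointwise supremum of the convex family $\set{E(v,\cdot):\,v\in\SVF}$, the worst-case error $\Ewc$ is convex.

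For \textbf{continuity}, I would invoke the classical result that a convex function on a Banach space whose effective domain is the whole space and which is locally bounded above at every point is continuous (in fact locally Lipschitz) there. Re-using the estimate already derived in the proof of Proposition~\ref{prop:approximationErrorDifferentiable},
\begin{equation*}
    E(v, m) \leq \Cmu C_\alpha \nrmVF{v}^2 + \Cmu \nrmLInf{m}^2 \nrmVF{v}^2,
\end{equation*}
together with the uniform bound $\nrmVF{v} \leq R$ for $v \in \SVF$ coming from compactness, yields $\Ewc(m) \leq C_1 + C_2 \nrmLInf{m}^2$ for constants depending on $\Cmu, C_\alpha, R$. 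This is bounded on every ball of $\LInfOmo$, which provides the required local boundedness. I do not foresee any real obstacle: the statement is essentially a packaging of the regularity of $E$, the compactness of $\SVF$, and the classical continuity theorem for everywhere-finite convex functions on Banach spaces; the only minor point to watch is to use convexity in the $m$ slot only, rather than the joint convexity of $E$, which does not hold.
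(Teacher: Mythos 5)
Your proposal is correct and follows essentially the same route as the paper: convexity as a supremum of the convex family $\set{E(v,\cdot)}_{v\in\SVF}$, finiteness from continuity of $v\mapsto E(v,m)$ on the compact set $\SVF$, and continuity via local boundedness plus the classical theorem for everywhere-finite convex functions on a Banach space. Your explicit bound $\Ewc(m)\leq C_1 + C_2\nrmLInf{m}^2$ is in fact a cleaner way to establish the local boundedness than the paper's sequential argument, but it rests on the same estimates from Proposition~\ref{prop:approximationErrorDifferentiable}.
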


\begin{proof}[Proof of Proposition~\ref{prop:wcErrorIsProperClosedConvex}]
    \parskip = \pskip

    It is apparent that \(\Ewc\) is convex as it is the supremum of convex functions. Furthermore, \(\Ewc(m)\) is finite everywhere as for every fixed \(m\in\LInfOmo\) the map \(v\mapsto E(v,m)\) is continuous and \(\SVF\) is compact. We will now prove that \(\Ewc\) is bounded in the neighborhood of any \(m_0\in\LInfOmo\). The continuity will then follow from~\cite[Theorem~1, Section~3.2.3]{ioffe_theory_1979}.

    Let \(\set{m_n}_{n\in\N}\) be a sequence such that \(m_n\to m_0\) as \(n\to\infty\) and \(\Ewc(m_n) > n\). Without loss, we can assume that \(\nrmLInf{m_n - m_0} \leq 1\) for all \(n\in\N\). Then, for every \(n\) we may choose \(v_n\in \VF\) such that
    \[
        \Ewc(m_n) \leq E(v_n, m_n) + 2^{-n}.
    \]
    Since \(\SVF\) is compact, it is bounded, and there exist constants \(C_0,C_1 > 0\) such that \(\nrmVF{v_n} \leq C_0\) and \(\nrmVF{\Da v_n} \leq C_1\) for each \(n\in\N\). It follows that
    \[
        E(v_n, m_n) \leq \int_{\Omo}|\D v_k(\xi)|^2\,d\mu(\xi) + \int_{\Omo}(1 + |m_0(\xi)|^2)|v_k(\xi)|^2\,d\mu(\xi) \leq C_{E} C_0 + \Cmu C_1(1+\nrmLInf{m_0}^2) < \infty.
    \]
    This implies that \(\set{\Ewc(m_n)}\) is bounded, and thus \(\Ewc\) is bounded near any \(m_0\).
\end{proof}

%% ============================================================================
\subsection{Existence of worst-case optimal multipliers}
%% ============================================================================

A {\em worst-case optimal multiplier} minimizes the worst-case error, i.e., it is a minimizer for the convex problem
\begin{equation}
    \label{opt:wcErrorMinimizationLInf}
    \begin{aligned}
        & \underset{}{\text{minimize}}
        & & \Ewc(m)
        & \text{subject to}
        & & m\in \LInfOmo.
    \end{aligned}
\end{equation}
However, proving the existence of minimizers is non-trivial. The main obstruction is the existence of unbounded minimizing sequences. To prove the existence of minimizers, it is sufficient for \(\Ewc\) to be {\em coercive}~\cite[Theorem~2.11 and Remark~2.11]{barbu_convexity_2012}, that is, for \(\Ewc\) to satisfy
\[
    m\to\infty\quad\Rightarrow\quad \Ewc(m) \to \infty.
\]
Interestingly, the set
\begin{equation}
    \label{eq:def:nullSet}
    \ZF := \bigcaps_{f\in\mF} \set{\xi\in\Omega_0:\, f(\xi) = 0}
\end{equation}
of common zeros of elements of \(\mF\) hint at whether \(\Ewc\) is coercive. By construction, this is also the set of zeros common to {\em all} the elements of \(\VF\).

\begin{proposition}
\label{prop:model:eStarNotCoercive}
    If \(\ZF\neq \emptyset\) then \(\Ewc\) is not coercive and there are always unbounded minimizing sequences for \(\Ewc\).
\end{proposition}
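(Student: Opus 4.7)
The plan is to exploit the fact that every $v\in\VF$ vanishes pointwise on $\ZF$: for any $\xi_0\in\ZF$, the evaluation functional $v\mapsto v(\xi_0)$ is continuous on $\vspan(\mF)$ by~\eqref{eq:conditionOnVNorm} and vanishes on $\vspan(\mF)$ by~\eqref{eq:def:nullSet}, so it extends by density to the zero functional on all of $\VF$. Geometrically, this says that one can perturb any candidate multiplier $m$ by a bump of arbitrary height supported close to $\xi_0$: since $v$ is uniformly small there, $mv$ is barely affected, but $\nrmLInf{m}$ blows up.

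Concretely, fix $\xi_0\in\ZF$. The continuous inclusion $\VF\hookrightarrow C(\Omo)$ implied by~\eqref{eq:conditionOnVNorm} embeds the compact set $\SVF$ as an equicontinuous family (Arzel\`a--Ascoli), which together with $v(\xi_0)=0$ produces, for each $\epsilon>0$, a neighbourhood $U\subset\Omo$ of $\xi_0$ with $|v(\xi)|\leq\epsilon$ uniformly over $v\in\SVF$ and $\xi\in U$. Now pick a minimizing sequence $\set{\tilde m_k}\subset\LInfOmo$ with $\Ewc(\tilde m_k)\leq\infs\Ewc+1/k$, set $c_k:=\nrmLInf{\tilde m_k}+k$, and select $\epsilon_k>0$ (to be chosen below) together with a neighbourhood $U_k$ of $\xi_0$ on which $|v|\leq\epsilon_k$ for all $v\in\SVF$. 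Define $m_k:=\tilde m_k+c_k\chi_{U_k}$, so that $\nrmLInf{m_k}\geq k$.

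Writing $w_k:=\Da v-\tilde m_k v$, the difference
\[
E(v,m_k)-E(v,\tilde m_k)=\int_{U_k}\bigl(-c_k\real(\overline{w_k}v)+\tfrac{1}{2}c_k^2|v|^2\bigr)\,d\mu
\]
is supported on $U_k$. Cauchy--Schwarz together with $|v|\leq\epsilon_k$ on $U_k$, the uniform control $\int_{\Omo}|w_k|^2\,d\mu=2E(v,\tilde m_k)\leq 2(\infs\Ewc+1/k)$, and the finiteness of $\mu(U_k)$ bound this difference by a quantity of the form $A_k\epsilon_k+B_k\epsilon_k^2$ uniformly in $v\in\SVF$, with $A_k,B_k$ depending only on $k$ and the data. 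Choosing $\epsilon_k$ small enough makes the difference $\leq 1/k$, hence $\Ewc(m_k)\leq\infs\Ewc+2/k$. Thus $\set{m_k}$ is an unbounded minimizing sequence and, in particular, $\Ewc$ fails to be coercive.

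The main obstacle is upgrading the pointwise identity $v(\xi_0)=0$ to a \emph{uniform} smallness of $|v|$ on a fixed neighbourhood of $\xi_0$ across the whole family $\SVF$; this is where compactness of $\SVF$ combined with the embedding into $C(\Omo)$ (via Arzel\`a--Ascoli) does the heavy lifting. The remaining estimates are routine expansion plus Cauchy--Schwarz.
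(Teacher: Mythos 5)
Your proof is correct and follows essentially the same strategy as the paper's: fix a common zero \(\xio\in\ZF\), add a tall perturbation concentrated near \(\xio\), and use compactness of \(\SVF\) to show that all \(v\in\SVF\) are uniformly small there, so the worst-case error is barely affected while the \(\LInf\)-norm blows up. The only differences are organizational: you obtain the uniform smallness via equicontinuity (Arzel\`a--Ascoli) of \(\SVF\) in \(C(\Omo)\) rather than by extracting convergent subsequences of near-maximizers as the paper does, and you perturb a minimizing sequence directly so that non-coercivity and the unbounded minimizing sequence come out of a single construction.
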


\begin{proof}[Proof of Proposition~\ref{prop:model:eStarNotCoercive}]
    \parskip = \pskip

    Supoose that \(\ZF\neq\emptyset\) and let \(\xio\in \ZF\). Fix a continuous and compactly supported function \(h: \Rd \to \R\) such that \(0 \leq h \leq 1\), \(h(0) = 1\) and \(h(\xi) = 0\) for \(|\xi| > 1\). Define for \(\delta > 0\)
    \[
        m_{\delta}(\xi) = \sqrt{\frac{1}{\delta^{d}}h\left(\frac{\xi - \xio}{\delta}\right)}
    \]
    By construction \(\nrmLInf{m_{\delta}} = \delta^{-d/2}\) and the collection \(\set{m_\delta}_{\delta > 0}\) is unbounded in \(\LInfOmo\).

    To prove \(\Ewc\) is not coercive, consider the bound
    \[
        \Ewc(m_\delta) \leq \sups_{v\in\SVF}\, \int_{\Omo} |\Da v(\xi)|^2\,d\mu(\xi) + \sups_{v\in\SVF}\, \int_{\Omo} |m_\delta(\xi) v(\xi)|^2\, d\mu(\xi).
    \]
    The first term in the right-hand side is bounded as
    \[
        \sups_{v\in\SVF}\, \int_{\Omo} |\Da v(\xi)|^2\, d\mu(\xi) \leq \Cmu\sups_{v\in\SVF}\, \int |\Da v(\xi)|^2\, d\xi
    \]
    and \(\SVF\) is bounded and \(\Da : \VF \to \LERd\) is continuous. To control the second term in the right-hand side, first observe that
    \[
        \int_{\Omo} |m_\delta(\xi) v(\xi)|^2\, d\mu(\xi) \leq \Cmu \int_{\Omo} |m_\delta(\xi) v(\xi)|^2\, d\xi.
    \]
    We will prove that
    \begin{equation}
    \label{eq:pf:uniformControlSupOverS}
        \limsups_{\delta\to 0}\sups_{v\in\SVF}\, \int_{\Omo}|m_\delta(\xi) v(\xi)|^2 d\xi = 0.
    \end{equation}
    Let \(\set{\delta_n}_{n\in\N}\) and let \(\set{v_n}_{n\in\N}\) be such that
    \[
        \sups_{n\delta \leq 1}\sups_{v\in\SVF}\, \int_{\Omo} |v(\xi)|^2 |m_{\delta}(\xi)|^2 d\mu(\xi) \leq 2^{-n} + \int_{\Omo} |v_n (\xi)|^2 |m_{\delta_n}(\xi)|^2 d\xi.
    \]
    Hence, \(\delta_n\to 0\) as \(n\to \infty\). Since \(\SVF\) is compact, by possibly passing to a subsequence we may assume that there exists \(v_0 \in \SVF\) such that \(v_n \to v_{0}\) as \(n\to\infty\) in \(\VF\). Hence, we may bound
    \[
        \int_{\Omo} |v_n (\xi)|^2 |m_{\delta_n}(\xi)|^2 d\xi \leq 2\int_{\Omo} |(v_n - v_0)(\xi)|^2 |m_{\delta_n}(\xi)|^2 d\xi + 2\int_{\Omo} |v_0 (\xi)|^2 |m_{\delta_n}(\xi)|^2 d\xi.
    \]
    For the first term, observe that
    \[
        \int_{\Omo} |v_n(\xi) - v_0(\xi)|^2  |m_{\delta_n}(\xi)|^2 d\xi \leq\left(\int_{\BE} h(\xi) d\xi\right)\sups_{\xi\in\Omo}|v_n(\xi) - v_0(\xi)|^2 \leq
       \left(\int_{\BE} h(\xi) d\xi\right)\nrmVF{v_n - v_0}^2.
    \]
    For the second, as \(v_0(\xio) = 0\) and \(v_0\) is continuous on the compact set \(\Omo\) we conclude that
    \[
        \int_{\Omo} |v_0(\xi)|^2  |m_{\delta_n}(\xi)|^2 d\xi = \int_{\Omo\cap (\xio+\delta_n \BE)} |v_0(\xio + \delta_n\xi)|^2 d\xi \to 0
    \]
    as \(n\to\infty\). We conclude that~\eqref{eq:pf:uniformControlSupOverS} holds. Hence, by taking the sequence \(\set{m_{\delta_n}}_{n\in\N}\) we conclude that \(\nrmLInf{m_{\delta_n}}\to \infty\) as \(n\to\infty\) whereas
    \[
        \limsups_{n\to\infty}\, \Ewc(m_{\delta_n}) \leq \Cmu\sups_{v\in\SVF}\, \int |\Da v(\xi)|^2\, d\xi
    \]
    whence \(\Ewc\) is not coercive.

    To prove that there exists unbounded minimizing sequences, let \(m_0\in\LInfOmo\) and observe that from the bound
    \begin{align*}
        \Ewc(m_0 + m_\delta) &\leq \sups_{v\in\SVF}\, \int_{\Omo} |\D v(\xi) - m_0(\xi) v(\xi)|^2\, d\mu(\xi) + \sups_{v\in\SVF}\, \int_{\Omo} |m_\delta(\xi) v(\xi)|^2\, d\mu(\xi)\\
        &\leq \Ewc(m_0) + \Cmu \sups_{v\in\SVF}\, \int_{\Omo} |m_\delta(\xi) v(\xi)|^2\, d\xi
    \end{align*}
    and~\eqref{eq:pf:uniformControlSupOverS} it follows that
    \[
        \limsups_{\delta\to 0} \Ewc(m_0 + m_\delta) \leq \Ewc(m_0).
    \]
    If \(\set{m_n}_{n\in\N}\) is an unbounded minimizing sequence there is nothing to prove. If it is a bounded sequence, then we construct a sequence \(\set{\delta_n}_{n\in\N}\) such that \(\delta_n\to 0\) as \(n\to \infty\) and
    \[
        \Ewc(m_n + m_{\delta_n}) \leq \Ewc(m_n).
    \]
    This implies that \(\set{m_n + m_{\delta_n}}_{n\in\N}\) is also a minimizing sequence. Furthermore, it is unbounded, as
    \[
        \nrmLInf{m_n + m_{\delta_n}} \geq \delta_n^{-d/2} - \nrmLInf{m_{n}}.
    \]
    This proves the claim.
\end{proof}

This result may suggest to the reader that \(\ZF = \emptyset\) would imply that \(\Ewc\) is coercive or that it has no unbounded minimizing sequences. However, we have not been able to either prove or disprove this statement. Although \(\LInfOmo\) is the natural space for Fourier multipliers acting on \(\LERd\), proving the existence of minimizers to~\eqref{opt:wcErrorMinimizationLInf} directly seems fairly non-trivial.

Instead of making assumptions on \(\ZF\) our strategy will be to change the space to which the multiplier belongs. Since the elements of \(\VF\) are continuous on \(\Omo\) we will have that \(m v \in \LEOmo\) for any \(v\in \VF\) and \(m\in \LEOmo\). Therefore, we can replace \(\LInfOmo\) by \(\LEOmo\) and still obtain a satisfactory analysis by leveraging the Hilbert space structure of \(\LEOmo\) to determine the existence of minimizers. Remark that both Proposition~\ref{prop:approximationErrorDifferentiable} and Proposition~\ref{prop:wcErrorIsProperClosedConvex} hold if we define the approximation error as
\[
    E : \VF\times \LEOmo \to \R
\]
and the worst-case error as \(\Ewc : \LEOmo\to\R\). Our goal is then to solve
\begin{equation}
    \label{opt:wcErrorMinimizationLE}
    \begin{aligned}
        & \underset{}{\text{minimize}}
        & & \Ewc(m)
        & \text{subject to}
        & & m\in \LEOmo.
    \end{aligned}
\end{equation}
We can strengthen substantially the results in Proposition~\ref{prop:model:eStarNotCoercive}.

\begin{theorem}\label{thm:wcErrorCoerciveInL2}
    If \(\ZF\neq \emptyset\) then \(\Ewc:\LEOmo\to \R\) is not coercive. If there exists a finite or countable subset \(\So\subset S\) such that
    \begin{equation}
    \label{eq:coveringConditionH}
        \forall\, \xi\in \Omo,\,\exists\, h\in \So:\, h(\xi) \neq 0
    \end{equation}
    then \(\Ewc:\LEOmo\to \R\) is coercive.
\end{theorem}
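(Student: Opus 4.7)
The plan is to treat the two implications separately. The non-coercive direction is obtained by adapting the construction in the proof of Proposition~\ref{prop:model:eStarNotCoercive}: we keep the concentration near a common zero $\xio \in \ZF$ but rescale the bump so that its $\LEOmo$-norm blows up while the integrals $\int_{\Omo} |m_\delta v|^2\, d\mu$ still vanish uniformly over $v \in \SVF$. The coercive direction is obtained by combining a reverse triangle inequality --- which converts a bound on $\Ewc(m)$ into uniform $L^2(\mu)$-bounds on $m h_k$ for $h_k \in \So$ --- with a weighted-sum construction leveraging the covering condition~\eqref{eq:coveringConditionH} to produce a strictly positive continuous kernel on $\Omo$ against which $|m|^2$ can be bounded from below.

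For the first implication, fix $\xio \in \ZF$ and the bump $h$ of Proposition~\ref{prop:model:eStarNotCoercive}. Because point evaluation on $\Omo$ is continuous on $\VF$ by~\eqref{eq:conditionOnVNorm}, $\xio \in \ZF$ implies $v(\xio) = 0$ for every $v \in \VF$. A compactness argument in $\SVF \times \cBE$, mirroring the derivation of~\eqref{eq:pf:uniformControlSupOverS}, yields
$$\varepsilon_\delta \;:=\; \sup_{v \in \SVF}\, \sup_{|\eta| \leq 1}\, |v(\xio + \delta \eta)| \;\longrightarrow\; 0 \quad \text{as } \delta \to 0^+.$$
Set $m_\delta(\xi) := \sqrt{\varepsilon_\delta^{-1}\, \delta^{-d}\, h((\xi - \xio)/\delta)}$. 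A change of variables gives $\nrm{m_\delta}_{\LEOmo}^2 = \varepsilon_\delta^{-1} \int h \to \infty$, whereas for every $v \in \SVF$,
$$\int_{\Omo} |m_\delta(\xi)|^2\, |v(\xi)|^2\, d\mu(\xi) \;\leq\; \Cmu\, \varepsilon_\delta^{-1} \int_{\cBE} h(\eta)\, |v(\xio + \delta \eta)|^2\, d\eta \;\leq\; \Cmu\, \varepsilon_\delta \int h.$$
Combined with the $\delta$-independent bound $\sup_{v \in \SVF} \int_{\Omo} |\Da v|^2\, d\mu \leq \Cmu C_\alpha^2 R^2$ (with $R$ bounding $\SVF$ in $\VF$ and $C_\alpha$ the operator norm of $\Da:\VF\to\LERd$), this gives $\limsups_{\delta \to 0} \Ewc(m_\delta) < \infty$ while $\nrm{m_\delta}_{\LEOmo} \to \infty$, which precludes coercivity.

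For the converse, write $\So = \set{h_k}_{k \in I}$. Since $\So \subset \SVF$, $\Ewc(m) \geq E(h_k, m)$ for every $k$, and the elementary bound $\|m h_k\|_{L^2(\mu)}^2 \leq 2\|m h_k - \Da h_k\|_{L^2(\mu)}^2 + 2 \|\Da h_k\|_{L^2(\mu)}^2$, together with $\nrmVF{h_k} \leq R$, yields
$$\int_{\Omo} |m h_k|^2\, d\mu \;\leq\; 4\, \Ewc(m) \;+\; 2 \Cmu C_\alpha^2 R^2 \;=:\; D(m),$$
uniformly in $k$. The lower bound $\cmu \int_{\Omo} |\cdot|^2\, d\xi \leq \int_{\Omo} |\cdot|^2\, d\mu$ converts this into $\int_{\Omo} |m|^2\, |h_k|^2\, d\xi \leq D(m)/\cmu$ for every $k$. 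Choose positive weights $a_k$ with $\sum_k a_k = 1$. Because $\sup_{\Omo} |h_k| \leq \nrmVF{h_k} \leq R$, the series $g(\xi) := \sum_k a_k |h_k(\xi)|^2$ converges uniformly on $\Omo$ and defines a continuous function there; the covering condition makes $g$ strictly positive pointwise, and compactness of $\Omo$ produces $g \geq c$ for some $c > 0$. Monotone convergence then gives
$$c \int_{\Omo} |m|^2\, d\xi \;\leq\; \int_{\Omo} |m|^2\, g\, d\xi \;=\; \sum_k a_k \int_{\Omo} |m|^2\, |h_k|^2\, d\xi \;\leq\; D(m)/\cmu,$$
so that $\nrm{m}_{\LEOmo} \to \infty$ forces $\Ewc(m) \to \infty$, which is coercivity.

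The main obstacle is the countable case of the second direction: producing a strictly positive continuous $g$ from a merely countable family $\So$ requires both the uniform bound $\sup_{\Omo} |h_k| \leq R$ (so that the series defining $g$ converges uniformly and $g$ is continuous) and the continuity of each $h_k$ on $\Omo$; the former comes from compactness of $\SVF$ in $\VF$, the latter from the embedding $\VF \hookrightarrow C(\Omo)$ built into the norm $\nrmVF{\cdot}$. A secondary technicality is that applying the density bounds on $\mu$ to products $m h_k$ --- which lie in $L^2$ but not necessarily in $\VF$ --- requires reading those bounds as pointwise density bounds, matching their use in Propositions~\ref{prop:approximationErrorDifferentiable} and~\ref{prop:model:eStarNotCoercive}.
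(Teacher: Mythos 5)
Your proof is correct and follows the same strategy as the paper, so I will only flag where it diverges in detail. For the non-coercive direction the paper merely writes that the claim ``follows from Proposition~\ref{prop:model:eStarNotCoercive},'' but the bump \(m_\delta\) built there has \emph{constant} \(\LEOmo\)-norm \(\bigl(\nrm{m_\delta}_{\LEOmo}^2 = \int h\bigr)\), so it is unbounded in \(\LInfOmo\) but not in \(\LEOmo\); your extra factor \(\varepsilon_\delta^{-1}\) (with \(\varepsilon_\delta\to 0\) by the same compactness argument that underlies~\eqref{eq:pf:uniformControlSupOverS}) is exactly what is needed to make the sequence blow up in \(\LEOmo\) while \(\Ewc\) stays bounded, so your treatment actually closes a small gap the paper glosses over. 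For the coercive direction your splitting \(\|mh_k\|^2 \le 2\|mh_k-\Da h_k\|^2 + 2\|\Da h_k\|^2\) is algebraically the same device as the paper's inequality \((at-b)^2\ge \tfrac12 a^2t^2-b^2\), and your kernel \(g=\sum_k a_k|h_k|^2\) is the paper's \(\phi\); the two arguments are equivalent, and your remark about reading the \(\cmu,\Cmu\) bounds as pointwise density bounds matches how the paper itself applies them to functions like \(\Da v - mv\) that need not lie in \(\VF\). One minor point worth tightening: the supremum defining \(\varepsilon_\delta\) should be restricted to those \(\eta\) with \(\xio+\delta\eta\in\Omo\) (as the paper does by integrating over \(\Omo\cap(\xio+\delta\BE)\)), since the \(\VF\)-norm only controls sup-values on \(\Omo\); this changes nothing in the conclusion.
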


\begin{proof}[Proof of Theorem~\ref{thm:wcErrorCoerciveInL2}]
    \parskip = \pskip

    It is apparent that the first statement follows from Proposition~\ref{prop:model:eStarNotCoercive}.

    To prove the second statement we first make some preliminary remarks. The restriction of every \(h\in\So\) to \(\Omo\) is a continuous function. Since \(\SVF \subset \VF\) is compact, it is bounded in \(\VF\), and thus there exists \(B >0\) such that
    \[
        \sups_{h\in\So}\sups_{\xi\in\Omo}\, |h(\xi)| \leq B.
    \]
    We prove the statement for \(\So\) countable. Write \(\So = \set{h_n:\, n\in\N}\) and let \(p\in\ell^1(\N)\). By Weierstrass' \(M\)-test,
    \[
        \phi := \lims_{N\to\infty}\sums_{n=1}^N |p(n)| |h_n|^2
    \]
    is a non-negative continuous function on \(\Omo\). If it vanished at some \(\xio \in \Omo\) then \(h_n(\xio) = 0\) for any \(n\in\N\) and it would contradict the hypothesis on \(\So\). Consequently,
    \[
        \infs_{\xi\in \Omo} \phi(\xi) > 0.
    \]
    Now, observe that
    \[
        \int_{\Omo}|\Da v(\xi) - m(\xi) v(\xi)|^2\, d\mu(\xi) \geq \int_{\Omo}||m(\xi)| |v(\xi)| - |\Da v(\xi)||^2\, d\mu(\xi).
    \]
    For \(a,b \geq 0\) it can be verified that
    \[
        (at - b)^2 = a^2 t^2 - 2ab t + b^2 \geq \frac{1}{2}a^2 t^2 - b^2
    \]
    for any \(t\geq 0\). It follows that
    \[
        \int_{\Omo}||m(\xi)| |v(\xi)| - |\Da v(\xi)||^2\, d\mu(\xi) \geq \frac{1}{2}\int_{\Omo}|v(\xi)|^2 |m(\xi)|^2\, d\xi - \int_{\Omo} |\D v(\xi)|^2\,d\mu(\xi).
    \]
    Therefore,
    \begin{align*}
        \Ewc(m) &\geq \sups_{n\in\N} E(h_n, m) \\
            &\geq \sums_{n\in\N} p_n E(h_n, m) \\
            &= \frac{1}{2} \sums_{n\in\N} |p_n| \int_{\Omo}|\Da h_n(\xi) - m(\xi) \vo(\xi))|^2\, d\mu(\xi) \\
            &\geq \frac{1}{4}\sums_{n\in\N} |p_n| \int_{\Omo}|h_n(\xi)|^2 |m(\xi)|^2\, d\mu(\xi) - \frac{1}{2}\sums_{n\in\N} |p_n|\int_{\Omo} |\Da h_n(\xi)|^2\,d\mu(\xi)\\
            &\geq \frac{1}{4}\left(\min\nolimits_{\xi\in\Omo}\phi(\xi)\right)\int_{\Omo}|m(\xi)|^2\, d\mu(\xi) - \Cmu \sups_{v\in\SVF}\, \int_{\Omo} |\Da v(\xi)|^2\,d\xi\\
            &\geq \frac{1}{4}\cmu \left(\min\nolimits_{\xi\in\Omo}\phi(\xi)\right)\int_{\Omo}|m(\xi)|^2\, d\xi - \Cmu \sups_{v\in\SVF}\, \int_{\Omo} |\Da v(\xi)|^2\,d\xi.
    \end{align*}
    We conclude that \(\Ewc\) is coercive.
\end{proof}

It is of interest to understand under which conditions we can ensure that~\eqref{eq:coveringConditionH} holds. The following result shows that it follows directly from \(\ZF = \emptyset\) when \(\SVF\) is {\em large enough} to contain at least one scaled element of \(\mF\). We omit the proof for brevity.

\begin{proposition}
    Suppose that \(\ZF = \emptyset\) and that there exists a set of non-zero complex numbers \(\set{a_\gamma:\gamma\in\Gamma}\) such that
    \begin{equation}
    \label{eq:coveringConditionF}
        \forall\, \gamma\in\Gamma:\, a_{\gamma}f_{\gamma} \in \SVF.
    \end{equation}
    Then \(\SVF\) satisfies~\eqref{eq:coveringConditionH} for
    \[
        \So := \set{a_{\gamma} f_{\gamma}:\, \gamma\in\Gamma}.
    \]
\end{proposition}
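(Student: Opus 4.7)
The plan is to verify the covering condition~\eqref{eq:coveringConditionH} for the proposed $\So$ by unpacking $\ZF = \emptyset$ pointwise. The argument should be essentially immediate, which is presumably why the authors elected to omit it; the only points worth checking are that $\So$ has the right cardinality and that the witnessing element actually lies in $\So$ (rather than merely in $\VF$).

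First I would note that since $\mU$, and hence $\mF = \{f_\gamma : \gamma\in\Gamma\}$, was assumed at the start of Section~\ref{sec:adaptiveMultipliers} to be finite or countable, the index set $\Gamma$ is finite or countable, and hence so is $\So = \{a_\gamma f_\gamma : \gamma\in\Gamma\}$. This meets the cardinality hypothesis of Theorem~\ref{thm:wcErrorCoerciveInL2}.

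Next, I fix an arbitrary $\xi\in\Omo$. By the definition~\eqref{eq:def:nullSet} of $\ZF$, the statement $\xi\notin\ZF$ is exactly the assertion that $\xi$ fails to be a common zero of every element of $\mF$, so there exists some $\gamma_0\in\Gamma$ with $f_{\gamma_0}(\xi)\neq 0$. I then set $h := a_{\gamma_0} f_{\gamma_0}$. By hypothesis~\eqref{eq:coveringConditionF} we have $h\in\So$, and since $a_{\gamma_0}\neq 0$ and $f_{\gamma_0}(\xi)\neq 0$ we obtain $h(\xi) = a_{\gamma_0} f_{\gamma_0}(\xi) \neq 0$. Because $\xi\in\Omo$ was arbitrary, this establishes~\eqref{eq:coveringConditionH} for $\So$.

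There is no real obstacle here: the argument is a one-line quantifier swap. The only subtlety worth flagging is the role played by the specific scaling constants $a_\gamma$ — they do not affect the non-vanishing of $f_{\gamma_0}(\xi)$, but they are what allow $\So$ to sit inside the prescribed compact set $\SVF$, which is the reason~\eqref{eq:coveringConditionF} is phrased with the rescalings $a_\gamma f_\gamma$ rather than with the $f_\gamma$ themselves.
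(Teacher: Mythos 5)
Your proof is correct and is exactly the argument the paper intends (the paper omits its proof as routine): $\ZF=\emptyset$ gives, for each $\xi\in\Omo$, some $\gamma_0$ with $f_{\gamma_0}(\xi)\neq 0$, and the nonzero scaling $a_{\gamma_0}$ preserves nonvanishing while placing the witness $a_{\gamma_0}f_{\gamma_0}$ inside $\So\subset\SVF$, with $\So$ finite or countable since $\Gamma$ is. Nothing is missing.
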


This suggests that when \(\ZF = \emptyset\) there are natural choices for \(\SVF\) for which the worst-case error will be coercive, namely, embeddings of the unit ball of a weighted \(\ell^1\)-norm of the form
\[
    S := \Lset{\sums_{\gamma\in \Gamma} a(\gamma) f_\gamma:\, \sums_{\gamma\in\Gamma}\tau(\gamma)^{-1}|a(\gamma)| <\infty}
\]
where \(\tau:\Gamma\to\R_+\) is strictly positive and tends to zero at infinity, and embeddings of hyperrectangles of the form
\[
    S := \Lset{\sums_{\gamma\in \Gamma} a(\gamma) f_\gamma:\,\gamma\in\Gamma:\,|a(\gamma)| \leq \tau(\gamma)}
\]
where \(\tau:\Gamma\to\R_+\) is strictly positive and summable.

When~\eqref{eq:coveringConditionH} does not holds we modify \(\Omo\) so that this condition holds. Let \(\rho > 0\) and define
\[
    \ZFr := \set{\xi\in \Omo:\,\, \dist(\xi, \ZF) < \rho}.
\]
Then \(\Omo\setminus\ZFr\) is also compact. If it is also non-empty for \(\rho\) sufficiently small, we may replace \(\Omo\) by \(\Omo\setminus\ZFr\) in our analysis. Our previous results apply in this case.

To conclude, under the hypotheses of Theorem~\ref{thm:wcErrorCoerciveInL2} there exists a minimizer to~\eqref{opt:wcErrorMinimizationLE}. Any minimizer is a {\em worst-case optimal multiplier} and we denote it as \(\mopt\). The minimal worst-case error will be denoted as \(\Ewcopt\). Since \(\Omo\) is compact, we have that \(\LInfOmo\subset \LEOmo\). Hence, a worst-case optimal multiplier is also a minimizer for~\eqref{opt:wcErrorMinimizationLInf} when
\[
    \mopt \in \LInfOmo\cap \LEOmo.
\]
Although we do not know if there are general conditions under which this holds, we will determine mild conditions for which it does hold for a finite collection \(\mF\).

%% ============================================================================
\subsection{The structure of the optimal multipliers}
%% ============================================================================

The minimization problem~\eqref{opt:wcErrorMinimizationLInf} is unconstrained and it suffices to analyze its first-order condition to characterize its minimizers. The function \(\Ewc\) is a continuous convex function with full domain and thus its subdifferential is non-empty everywhere~\cite[Proposition~16.14]{bauschke_convex_2011}. From the first-order optimality condition we conclude that \(\mopt\) is an optimal multiplier if
\[
    \partial \Ewc(\mopt)\ni 0.
\]
We can represent the subdifferential in terms of the error \(E\).

\begin{lemma}\label{lem:wcErrorSubdifferentialL2}
    Let
    \[
        d_m E_{(v,m)}(\Delta m) = -\real\left(\int_{\Omo}(\Da v (\xi) - m(\xi) v(\xi))^*  v(\xi) \Delta m(\xi) d\mu(\xi)\right)
    \]
    denote the partial derivative of \(E\) with respect to \(m\). For every \(m\in\LEOmo\) we have that
    \[
        \partial \Ewc(m) := \wscl(\cvxhull(\set{d_m E_{(v,m)}:\,v\in\SVF:\, E(v,m) = \Ewc(m)})).
    \]
\end{lemma}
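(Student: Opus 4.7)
The plan is to combine the classical characterization of the subdifferential of a continuous convex function via its directional derivative with a Hahn--Banach separation argument. Let $A(m) := \set{v \in \SVF : E(v,m) = \Ewc(m)}$ and denote by $K$ the set appearing on the right-hand side of the claimed identity. Joint continuity of $E$ (Proposition~\ref{prop:approximationErrorDifferentiable}) and compactness of $\SVF$ in $\VF$ ensure that $A(m)$ is nonempty and compact, and $K$ is nonempty, convex and weak-\(*\) closed in $\LEOmo$ by construction. The easy inclusion $K \subseteq \partial\Ewc(m)$ is immediate: for any $v \in A(m)$, convexity of $E(v,\cdot)$ gives $E(v,m') \geq E(v,m) + d_m E_{(v,m)}(m' - m)$, and combining $E(v,m') \leq \Ewc(m')$ with $E(v,m) = \Ewc(m)$ yields $d_m E_{(v,m)} \in \partial\Ewc(m)$. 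Since $\partial\Ewc(m)$ is convex and weak-\(*\) closed (as the subdifferential of a continuous convex function), the entire $K$ lies inside it.

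For the reverse inclusion, I would establish the directional-derivative identity
\[
    \Ewc'(m;\Delta m) = \sup_{v \in A(m)}\, d_m E_{(v,m)}(\Delta m),\qquad \Delta m \in \LEOmo.
\]
The inequality $\geq$ is trivial: pick $v_0 \in A(m)$, use $\Ewc(m+t\Delta m) - \Ewc(m) \geq E(v_0, m+t\Delta m) - E(v_0, m)$, divide by $t > 0$ and let $t \to 0^+$, invoking Fr\'echet differentiability of $E(v_0,\cdot)$. For $\leq$, I would select, for each small $t > 0$, some $v_t \in \SVF$ with $E(v_t, m + t\Delta m) \geq \Ewc(m + t\Delta m) - t^2$; compactness of $\SVF$ yields a subsequence $v_{t_k} \to v_0$ in $\VF$, and joint continuity of $E$ together with continuity of $\Ewc$ (Proposition~\ref{prop:wcErrorIsProperClosedConvex}) forces $v_0 \in A(m)$. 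The subgradient inequality for $E(v_{t_k}, \cdot)$ applied at $m + t_k\Delta m$ gives
\[
    \frac{\Ewc(m + t_k\Delta m) - \Ewc(m)}{t_k} \leq d_m E_{(v_{t_k},\, m + t_k\Delta m)}(\Delta m) + t_k,
\]
and the explicit formula~\eqref{eq:approximationErrorDerivative}, combined with $v_{t_k} \to v_0$ in $\VF$ and boundedness of $\SVF$ in $\VF$, yields $d_m E_{(v_{t_k}, m + t_k\Delta m)}(\Delta m) \to d_m E_{(v_0, m)}(\Delta m)$, which is dominated by the supremum over $A(m)$.

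With the directional-derivative identity in hand, the support function of $K$ on $\LEOmo$ coincides with $\Delta m \mapsto \sup_{v \in A(m)} d_m E_{(v,m)}(\Delta m) = \Ewc'(m;\Delta m)$, since taking convex hull and weak-\(*\) closure of a bounded set does not alter support functions (boundedness follows from compactness of $A(m)$ and continuity of $v \mapsto d_m E_{(v,m)}$). The standard identity $g \in \partial\Ewc(m) \iff g(\Delta m) \leq \Ewc'(m;\Delta m)$ for all $\Delta m \in \LEOmo$, combined with the Hahn--Banach theorem in the form that a weak-\(*\) closed convex set $K$ equals the collection of linear functionals dominated by its support function, then forces $\partial\Ewc(m) \subseteq K$. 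The main obstacle is the upper bound in the directional-derivative formula; the delicate ingredient is propagating the selection $v_t$ as $t \to 0^+$ to a limiting maximizer in $A(m)$ while simultaneously controlling $d_m E_{(v_t, m + t\Delta m)}(\Delta m)$, for which the compactness of $\SVF$ in $\VF$ --- strictly stronger than mere boundedness --- together with the explicit expression~\eqref{eq:approximationErrorDerivative} is essential.
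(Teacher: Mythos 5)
Your proof is correct, and it takes a genuinely more self-contained route than the paper. The paper's proof consists of verifying the hypotheses of a known ``Valadier-like'' formula for the subdifferential of a pointwise supremum of convex functions over a compact index set (each \(E_v\) convex, Fr\'echet differentiable with full domain by Proposition~\ref{prop:approximationErrorDifferentiable}; \(\Ewc\) with full domain by Proposition~\ref{prop:wcErrorIsProperClosedConvex}; \(\SVF\) compact) and then citing that result. You instead reprove the formula from scratch in this setting: the easy inclusion via the subgradient inequality for each active \(E(v,\cdot)\) together with weak-\(*\) closedness and convexity of the subdifferential; the Danskin-type identity \(\Ewc'(m;\Delta m)=\sups_{v\in A(m)}d_mE_{(v,m)}(\Delta m)\) via a compactness selection of near-maximizers \(v_t\) at \(m+t\Delta m\) and passage to the limit in the explicit formula~\eqref{eq:approximationErrorDerivative}; and the reverse inclusion via the Hahn--Banach duality between weak-\(*\) closed convex sets and their support functions. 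All the ingredients you invoke are available in the paper (joint continuity of \(E\), continuity of \(\Ewc\), compactness of \(\SVF\) in \(\VF\) controlling both the \(\LE\)- and sup-norms on \(\Omo\), which is exactly what is needed to get \(d_mE_{(v_{t_k},m+t_k\Delta m)}(\Delta m)\to d_mE_{(v_0,m)}(\Delta m)\)), and each step checks out. What your approach buys is independence from the external reference and transparency about which structural facts are actually used; what the paper's citation buys is brevity and a statement that covers more general, non-differentiable integrands.
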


\begin{proof}[Proof of Lemma~\ref{lem:wcErrorSubdifferentialL2}]
    \parskip = \pskip

    Let \(\set{E_v:\, v\in\SVF}\) be the collection of functions \(E_v : \LEOmo \to \R\) given by \(E_v(m) = E(v, m)\). By Proposition~\ref{prop:approximationErrorDifferentiable} the functions in the collection \(\set{E_v:\, v\in \SVF}\) are convex, with full domain, and Fréchet differentiable. In particular, Proposition~\ref{eq:approximationErrorDerivative} shows that
    \[
        \partial E_v(m) = \set{d_m E_v}
    \]
    where we have identified the subdifferential with a subset of \(\LEOmo^*\). By Proposition~\ref{prop:wcErrorIsProperClosedConvex} the worst-case error \(\Ewc := \sups_{v\in\SVF} E_v\) has full domain. Since \(\SVF\) is compact, by applying the Valadier-like formula in~\cite{correa_subdifferential_2021,correa_valadier-like_2019} we reach the desired conclusion, proving the lemma.
\end{proof}

To motivate our next result, consider the set
\[
    \SVFopt := \set{v\in S:\, E(v, \mopt) = \Ewcopt}.
\]
Suppose that there are \(v_1,\ldots, v_n\in \SVFopt\) and \(\pi_1,\ldots,\pi_n \geq 0\) such that \(\pi_1 + \ldots + \pi_n = 1\) and
\[
    0 = \sums_{i=1}^n \pi_i d_m E_{(v_i, \mopt)}(\Delta m) = -\real\left(\int_{\Omo}\left(\sums_{i=1}^n\pi_i\Da v_i (\xi) - m(\xi) v_i(\xi))^*  v_i(\xi)\right) \Delta m(\xi) d\mu(\xi)\right)
\]
for any \(\Delta m \in \LEOmo\). It then follows that
\[
    \sums_{i=1}^n\pi_i(\Da v_i - m v_i )^*  v_i = 0
\]
almost everywhere, whence
\[
    \mopt = \frac{\sums_{i=1}^n \pi_i v_i^* \Da v_i}{\sums_{i=1}^n \pi_i |v_i|^2}.
\]
It is illustrative to interpret this as the quotient of two expected values. Let \(\Pi\) be a probability measure on \(2^{\mX_n}\) such that \(\Pi(\set{i}) = \pi_i\). If we let \(X : \mX_n \to S\) be the \(S\)-valued random variable \(X(i) = v_i\) then
\begin{equation}\label{eq:wcOptimalMultiplierAsQuotientOfExpectations}
    \mopt = \frac{\ev\bset{X^* \Da X}}{\ev\bset{|X|^2}}.
\end{equation}
This is the canonical structure for worst-case optimal multipliers.

\begin{theorem}\label{thm:optimalMultiplierQuotientOfAverages}
    There exists a probability measure \(\Pi\) supported on \(\SVF\) such that
    \[
        \mopt = \frac{\int_{\SVF} v^* \Da v\, d\Pi(v)}{\int_{\SVF} |v|^2\, d\Pi(v)}.
    \]
\end{theorem}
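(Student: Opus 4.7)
The plan is to combine the first-order optimality condition for \(\mopt\) with a classical barycenter representation. Since \(\Ewc:\LEOmo\to\R\) is a continuous convex function with full domain and \(\mopt\) is a minimizer, one has \(0\in\partial\Ewc(\mopt)\), and Lemma~\ref{lem:wcErrorSubdifferentialL2} then gives
\[
    0 \in \wscl\bigl(\cvxhull\set{d_m E_{(v,\mopt)}:\, v\in \SVFopt}\bigr),
\]
where \(\SVFopt := \set{v\in \SVF:\, E(v,\mopt) = \Ewcopt}\). As \(E(\cdot,\mopt)\) is continuous on the compact set \(\SVF\), the set \(\SVFopt\) is non-empty and compact. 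Via Riesz representation in the real Hilbert space \(\LEOmo\), each functional \(d_m E_{(v,\mopt)}\) corresponds to a function \(\phi_v\in \LEOmo\) built algebraically from \(\Da v - \mopt v\) and \(v\), and the map \(\Phi : \SVFopt\to \LEOmo\), \(v\mapsto \phi_v\), is continuous by Proposition~\ref{prop:approximationErrorDifferentiable}. Hence \(K:=\Phi(\SVFopt)\) is norm-compact in \(\LEOmo\); since \(\LEOmo\) is a Hilbert space, the weak-\(*\) closure of a convex set coincides with its norm closure, so \(\wscl(\cvxhull(K)) = \cl(\cvxhull(K))\), which by Mazur's theorem is itself compact.

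The key tool is the classical barycenter theorem: every point of the closed convex hull of a norm-compact set \(K\) in a Banach space is the barycenter \(\int_K \phi\, d\nu(\phi)\) of some Borel probability measure \(\nu\) supported on \(K\) (see, e.g., Phelps' \emph{Lectures on Choquet's Theorem}). Applied at the origin this produces a probability measure \(\nu\) on \(K\) with \(\int_K \phi\, d\nu(\phi) = 0\) in the sense of a Bochner integral (note that the closed linear span of \(K\) is separable, so the calculus applies). To transfer this to \(\SVFopt\) I would observe that the push-forward map \(\Pi\mapsto \int_{\SVFopt} \phi_v\, d\Pi(v)\), from the set of Borel probability measures on the compact metric space \(\SVFopt\) (equipped with the weak-\(*\) topology) into \(\LEOmo\), is continuous, affine, and has image equal to \(\cl(\cvxhull(K))\). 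Consequently its preimage of \(0\) is non-empty, yielding a probability measure \(\Pi\) supported on \(\SVFopt\subset \SVF\) such that \(\int_{\SVF} \phi_v\, d\Pi(v) = 0\) in \(\LEOmo\). Applying Fubini to this identity then gives, for \(\mu\)-a.e. \(\xi\in\Omo\),
\[
    \mopt(\xi)\int_{\SVF} |v(\xi)|^2\, d\Pi(v) = \int_{\SVF} v(\xi)^* \Da v(\xi)\, d\Pi(v),
\]
which rearranges to the asserted quotient formula wherever the denominator is positive.

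The main obstacle is executing the integral-representation step rigorously: identifying the weak-\(*\) closure of a convex set of functionals on \(\LEOmo\) with the image of the probability-measure simplex on \(\SVFopt\), and making the vector-valued integration coherent with the complex-valued integrands. At points \(\xi\) where the denominator \(\int_{\SVF}|v(\xi)|^2\,d\Pi(v)\) vanishes, \(v(\xi) = 0\) holds for \(\Pi\)-almost every \(v\), which forces the numerator to vanish too; the identity degenerates to \(0=0\) and leaves \(\mopt(\xi)\) unconstrained there, so the formula in the statement is read in the natural extended sense at such points.
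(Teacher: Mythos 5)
Your proposal is correct and follows the same overall strategy as the paper's proof: the first-order condition \(0\in\partial \Ewc(\mopt)\) combined with Lemma~\ref{lem:wcErrorSubdifferentialL2}, then a representation of every element of the weak-* closed convex hull of \(\set{d_m E_{(v,\mopt)}}\) as an integral against a probability measure on a compact subset of \(\SVF\), and finally a pointwise (Fubini) identification yielding the quotient formula. The one place where you diverge is the justification of the integral representation. You pass to Riesz representers in \(\LEOmo\), use that weak-* and norm closures of a convex set agree in a Hilbert space, and invoke the Choquet--Phelps barycenter theorem for the norm-compact set \(K=\Phi(\SVFopt)\); the paper instead works directly with the Bochner-integral map \(\pi\mapsto \int_{\SVF} d_m E_{(v,\mopt)}\,d\pi(v)\) on the simplex of probability measures and shows its range is weak-* closed by a tightness/Prokhorov sequential argument. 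The two devices are interchangeable here, and your route has two small advantages: it makes explicit that the whole argument can be run in the norm topology of \(\LEOmo\), and it addresses the points where \(\int_{\SVF}|v(\xi)|^2\,d\Pi(v)\) vanishes (where the identity degenerates to \(0=0\)), a detail the paper leaves implicit. Your restriction of the support of \(\Pi\) to \(\SVFopt\) is also the refinement the paper only mentions in passing after the theorem.
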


\begin{proof}[Proof of Theorem~\ref{thm:optimalMultiplierQuotientOfAverages}]
    \parskip = \pskip

    Let \(S\) be endowed with its metric topology and let \(\BorSVF\) be the \(\sigma\)-algebra generated by its open sets. We denote as \(\PS\) the set of probability measures on \(S\) endowed with the topology of weak convergence of measures. Since \(S\) is compact, it is separable and, by Prokhorov's theorem, this topology is metrizable~\cite[Remark~13.14]{klenke_probability_2020}. Henceforth, we assume that \(\mopt\in \LEOmo\) is fixed.

    The map \(v\mapsto d_m E_{(v,\mopt)}\) is a map of the form \(S\mapsto \LERd^*\). If \(v_1,v_2\in S\) then, from the inequality
    \begin{multline*}
        d_m E_{(v_2,\mopt)}(\Delta m) - d_m E_{(v_1,\mopt)}(\Delta m)
        = -\real\left(\int_{\Omo}(\Da \Delta v (\xi) - \mopt(\xi) \Delta v(\xi))^*  v(\xi) \Delta m(\xi) d\mu(\xi)\right) \\
        -\real\left(\int_{\Omo}(\Da v_2 (\xi) - \mopt(\xi) v_2(\xi))^* \Delta v(\xi) \Delta m(\xi) d\mu(\xi)\right)
    \end{multline*}
    where \(\Delta v = v_2 - v_1\), we conclude that it is continuous when \(\LERd^*\) is endowed with its dual norm. This, together with the fact that  \(\LERd^* \cong \LERd\) is separable, implies that the map \(v\mapsto d_m E_{(v,\mopt)}\) is strongly measurable~\cite[Theorem~2, Section~2.1]{diestel_vector_1998}. Since \(S\) is compact and \(\Pi\) is finite, a straightforward argument also shows that it is Bochner integrable~\cite[Theorem~2, Section~2.2]{diestel_vector_1998}. Therefore, the map \(I_{E,\mopt}:\PS \to \LERd^*\) given by
    \[
        I_{E,\mopt} \pi := \int_S d_m E_{(v,m)}\, d\pi_i(v)
    \]
    is well-defined. Hence, using suitable Dirac masses, we deduce that
    \[
        \cvxhull(\set{d_m E_{(v,m)}:\,v\in\SVF:\, E(v,m) = \Ewc(m)}) \subset \set{I_{E,\mopt}\pi:\, \pi\in\PS}.
    \]
    It suffices to show that the right-hand side is closed under the weak-* topology in \(\LERd^*\). Since the topology of weak convergence in \(\PS\) is metrizable, it suffices to consider an arbitrary sequence \(\set{\pi_i}_{i\in\N}\) in \(\PS\) such that the sequence \(\set{\ell_i}_{i\in\N}\) in \(\LERd^*\) given by \(\ell_i := I_{E,\mopt}\pi_i\) converges to a limit \(\ell_0\in\LERd^*\). Since \(S\) is compact, the collection \(\set{\pi_i}_{i\in \N}\) is tight, whence, without loss, we may assume that it converges weakly to a limit \(\pi_0\in \PS\). Since for any \(\Delta m\in \LEOmo\) the function \(v\mapsto d_mE_{(v,\mopt)}(\Delta m)\) is continuous, we deduce that
    \[
        \lims_{i\to\infty} \int_S d_m E_{(v,m)}(\Delta m)\, d\pi_i(v) = \int_S d_m E_{(v,m)}(\Delta m)\, d\pi_0(v) = \ell_0(\Delta m).
    \]
    from where the claim follows. Hence, we have that
    \[
        \wscl(\cvxhull(\set{d_m E_{(v,m)}:\,v\in\SVF:\, E(v,m) = \Ewc(m)})) \subset \set{I_{E,\mopt}\pi:\, \pi\in\PS}.
    \]
    The remainder of the proof proceeds using the same arguments that yield~\eqref{eq:wcOptimalMultiplierAsQuotientOfExpectations}.
\end{proof}

The theorem could be refined by constraining the support of \(\Pi\) to \(\SVFopt\). However, this refinement is not necessary for our interpretation of the structure of the worst-case optimal multiplier: the expectation can be interpreted as a pointwise projection. Let \(\Pi\) be as in the theorem and recall that the evaluation on any \(\xi\in \Omo\) defines a continuous linear funcional on \(V\). Then we can define the random variables \(\Xi, \Xi_{\alpha} : \SVF\to \C\) given by \(\Xi(v) = v(\xi)\) and \(\Xi_{\alpha}(v) = v(\alpha\xi)\).  If \(\ev\bset{|\Xi(v)|^2} > 0\) then we can write
\[
    \mopt(\xi) = \frac{\ev\bset{\Xi^* \Xi_{\alpha}}}{\ev\bset{|\Xi|^2}}.
\]
Therefore, the worst-case optimal multiplier can be interpreted as finding the coefficient associated to the orthogonal projection of the random variable \(\Xi_{\alpha}\) onto the random variable \(\Xi\).

%% ============================================================================
\section{Adaptive multipliers for a finite collection of functions}
\label{sec:adaptiveMultipliersFiniteCollection}
%% ============================================================================

In practical applications the set \(\mF\) is often finite. Since its elements are linearly independent by assumption, if the collection contains \(n\) elements then \(\VF\) has dimension \(n\) and we may represent every \(v\in \VF\) as
\[
    v = \sums_{i=1}^n c_i(v) f_i
\]
where \(\Gamma = \set{1,\ldots, n}\) and \(\set{c_i}_{i=1}^n\) are the associated coordinate functionals. We interpret \(\Cn\) as the complex Hilbert space with inner product
\[
    \bprodlEn{\vx}{\vy} := \sums_{k=1}^n x_k^* y_k
\]
and we denote as \(\lEn\) the corresponding real Hilbert space. In this case, it is natural to endow \(\VF\) with the norm
\[
    \nrmVF{v} := \left(\sums_{i=1}^n |c_{i}(v)|^2\right)^{1/2} = \nrmlEn{\vc(v)},
\]
where \(\vc : \VF \to \Cn\) is coordinate map. It is apparent that this norm satisfies~\eqref{eq:conditionOnVNorm}. From now on, it will be useful to define the functions \(\vf,\Da\vf :\Rd\to\Cn\) as
\[
    \vf := \bmtx{f_1 \\ \vdots \\ f_n}\quad\mbox{and}\quad \Da\vf := \bmtx{\Da f_1 \\ \vdots \\ \Da f_n}.
\]
We now instantiate the results in Section~\ref{sec:adaptiveMultipliers} to this case. This will yield further insight into the structure of the worst-case optimal multipliers.

%% ============================================================================
\subsection{\(\Sigma\)-mutlipliers}
%% ============================================================================

The approximation error can be represented in terms of the coordinates as
\begin{align*}
    E(v,m) &:= \frac{1}{2}\int_{\Omo} \left|\Da \left(\sums_{i=1}^n c_i(v) f_i(\xi)\right) - m(\xi) \sums_{i=1}^{n} c_i(v)f_i(\xi)\right|^2\, d\mu(\xi) \\
        &= \frac{1}{2}\int_{\Omo} \left|\sums_{i=1}^n c_i(v) (\Da f_i(\xi) - m(\xi) f_i(\xi))\right|^2\, d\mu(\xi) \\
        &= \frac{1}{2}\int_{\Omo} |\iprodlEn{\vc(v)}{\Da\vf(\xi) - m(\xi)\vf(\xi)}|^2\, d\mu(\xi) \\
        &=: e(\vc(v), m).
\end{align*}
Furthermore, we can write
\[
    e(\vc, m) = \frac{1}{2}\int_{\Omo} |\iprodlEn{\vc(v)}{\Da\vf(\xi) - m(\xi)\vf(\xi)}|^2\, d\mu(\xi) = \iprodlEn{\vc}{\vG(m)\vc}
\]
where
\[
   \vG(m) = \frac{1}{2}\int_{\Omo} (\Da \vf(\xi) - m(\xi) \vf(\xi))(\Da \vf(\xi) - m(\xi) \vf(\xi))^*\, d\mu(\xi).
\]
Let \(\Hnn\) be the real Hilbert space of Hermitian matrices of size \(n\times n\) endowed with the inner product
\[
    \iprodF{\vX}{\vY} := \real(\bprodF{\vX}{\vY})\quad\mbox{where}\quad \bprodF{\vX}{\vY} := \trace(\vX^*\vY).
\]
It follows that \(\vG : \LEOmo \to \Hnn\). It will be useful to characterize the derivative of this function.

\begin{proposition}\label{prop:gramMatrixDifferentiable}
    The function \(\vG : \LEOmo \to \Hnn\) is Fréchet differentiable with
    \[
        d\vG_m(\Delta m) = -\frac{1}{2}\int_{\Omo} (\Delta m(\xi)^* (\Da \vf(\xi) - m(\xi) \vf(\xi)) \vf(\xi)^* + \Delta m(\xi) \vf(\xi) (\Da \vf(\xi) - m(\xi) \vf(\xi))^*)  d\mu(\xi)
    \]
\end{proposition}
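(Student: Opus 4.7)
The plan is to prove Fréchet differentiability by a direct expansion: write $\vG(m+\Delta m) - \vG(m)$ as a linear term in $\Delta m$ (which will be the claimed derivative) plus a quadratic remainder, and then verify that the linear term is continuous from $\LEOmo$ into $\Hnn$ and that the remainder is $o(\nrm{\Delta m}_{\LEOmo})$.

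First, I would set $r(\xi) := \Da \vf(\xi) - m(\xi)\vf(\xi)$ so that $\vG(m) = \tfrac{1}{2}\int_{\Omo} r r^*\, d\mu$, and then substitute $m + \Delta m$ for $m$, obtaining the pointwise identity
\[
    (\Da\vf - (m+\Delta m)\vf)(\Da\vf - (m+\Delta m)\vf)^* = rr^* - \Delta m\, \vf r^* - (\Delta m)^* r\vf^* + |\Delta m|^2 \vf\vf^*.
\]
Integrating against $d\mu$ and subtracting $\vG(m)$ yields
\[
    \vG(m + \Delta m) - \vG(m) = L(\Delta m) + R(\Delta m),
\]
where $L(\Delta m)$ coincides with the proposed expression for $d\vG_m(\Delta m)$ and the remainder is $R(\Delta m) = \tfrac{1}{2}\int_{\Omo} |\Delta m(\xi)|^2 \vf(\xi)\vf(\xi)^*\, d\mu(\xi)$.

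Next I would verify that $L : \LEOmo \to \Hnn$ is a bounded linear operator. Linearity is clear; for boundedness, using $\nrmF{\vx\vy^*} = \enrm{\vx}\enrm{\vy}$ and Cauchy--Schwarz,
\[
    \nrmF{L(\Delta m)} \leq \int_{\Omo} |\Delta m(\xi)|\, \enrm{r(\xi)}\, \enrm{\vf(\xi)}\, d\mu(\xi) \leq \Cmu\, \nrmLE{\Delta m} \nrmLE{\enrm{r}\enrm{\vf}},
\]
and the last factor is finite since both $r$ and $\vf$ are bounded on the compact set $\Omo$ (the entries of $\vf$ are continuous and so is $r$ after multiplication by $m\in\LEOmo$, which is handled by integrating against $\mu$ exactly as in the proof of Proposition~\ref{prop:approximationErrorDifferentiable}). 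For the remainder, the continuity of the components of $\vf$ on the compact set $\Omo$ gives an $M > 0$ with $\sups_{\xi\in\Omo}\enrm{\vf(\xi)} \leq M$, whence
\[
    \nrmF{R(\Delta m)} \leq \frac{1}{2}\int_{\Omo}|\Delta m(\xi)|^2\enrm{\vf(\xi)}^2\, d\mu(\xi) \leq \frac{M^2}{2} \Cmu \nrmLE{\Delta m}^2,
\]
which is $o(\nrmLE{\Delta m})$ as $\Delta m\to 0$ in $\LEOmo$. This establishes Fréchet differentiability with the stated derivative.

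The main delicate point is controlling the integrands with respect to $\mu$, since $|\Delta m|^2$ is not a priori an element of $\VF$. As in the proof of Proposition~\ref{prop:approximationErrorDifferentiable}, this is dealt with by factoring out uniform bounds on $\vf$ and $r$ over the compact set $\Omo$ so that the integrand of the remainder is dominated by a constant multiple of $|\Delta m|^2$, at which point the upper bound involving $\Cmu$ reduces the estimate to the Lebesgue $\LE$-norm. No coercivity assumption on $\mF$ is required here, only the definition of $\vG$ and the standing hypotheses on $\mu$.
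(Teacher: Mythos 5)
Your proof is correct and follows essentially the same route as the paper: the paper's proof consists precisely of the expansion of \(\vG(m+\Delta m)\) you derive, followed by ``similar arguments as those used in the proof of Proposition~\ref{prop:approximationErrorDifferentiable}'' (which you spell out: boundedness of the linear part and an \(O(\nrmLE{\Delta m}^2)\) remainder). The only caveat is that applying the \(\Cmu\) bound to \(|\Delta m|^2|\vf|^2\) and \(|r|^2|\vf|^2\) uses the domination \(d\mu \leq \Cmu\, d\xi\) on \(\Omo\) for functions outside \(\VF\), but this is exactly the convention the paper itself adopts in the proof of Proposition~\ref{prop:approximationErrorDifferentiable}, so your argument is faithful to the intended one.
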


\begin{proof}[Proof of Proposition~\ref{prop:gramMatrixDifferentiable}]
    \parskip = \pskip

    The proof follows from the expansion
    \begin{multline*}
        \vG(m + \Delta m) = \vG(m) + \int_{\Omo} |\Delta m(\xi)|^2 \vf(\xi)\vf(\xi)^* d\mu(\xi) \\
        - \int_{\Omo} \Delta m(\xi)^* (\Da \vf(\xi) - m(\xi) \vf(\xi)) \vf(\xi)^*d\mu(\xi) - \int_{\Omo} \Delta m(\xi) \vf(\xi) (\Da \vf(\xi) - m(\xi) \vf(\xi))^*  d\mu(\xi).
    \end{multline*}
    and an application of similar arguments as those used in the proof of Proposition~\ref{prop:approximationErrorDifferentiable}. We omit the details for brevity.
\end{proof}

To any choice of compact set \(\SVF\) we can associate the compact set \(\CVF := \vc^{-1}(\SVF)\) so that the worst-case error becomes
\[
    \Ewc(m) = \sups_{v\in S}\, E(v, m) = \sups_{v\in S}\, e(\vc(v), m) = \sups_{\vc\in C}\, e(\vc, m) = \sups_{\vc\in C}\, \iprodlEn{\vc}{\vG(m)\vc}.
\]
In this case, we have the following corollary.

\begin{corollary}\label{cor:wcOptimalMultipliersFiniteDimensional}
    If \(\mF\) is finite and
    \[
        \sums_{i=1}^n |f_i|^2 > 0
    \]
    on \(\Omo\) then there exists at least one worst-case optimal multiplier. Furthermore, for any optimal multiplier \(\mopt\) there exists \(\vSig\in \Hnn\) positive semidefinite such that
    \[
        \mopt = \frac{\bprodlEn{\vf}{\vSig \Da\vf}}{\bprodlEn{\vf}{\vSig\vf}}.
    \]
\end{corollary}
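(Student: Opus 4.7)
The plan is to derive both assertions from the general results of Section~\ref{sec:adaptiveMultipliers}. For the existence part, note that the hypothesis \(\sum_{i=1}^n |f_i|^2 > 0\) on \(\Omo\) is exactly the statement that \(\ZF = \emptyset\) in the finite case. Under the standing assumption that \(\SVF\) contains nonzero scalar multiples \(a_i f_i\) of each generator (the typical setting discussed just after Theorem~\ref{thm:wcErrorCoerciveInL2}), the proposition preceding the corollary furnishes the covering condition~\eqref{eq:coveringConditionH} with \(\So = \set{a_i f_i : i=1,\ldots,n}\). Theorem~\ref{thm:wcErrorCoerciveInL2} then gives coerciveness of \(\Ewc\) on \(\LEOmo\); combined with the continuity and convexity from Proposition~\ref{prop:wcErrorIsProperClosedConvex}, a minimizer \(\mopt\) exists via a standard coercive-plus-lower-semicontinuous argument on the reflexive space \(\LEOmo\).

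For the structural claim, I would apply Theorem~\ref{thm:optimalMultiplierQuotientOfAverages} to any such \(\mopt\) to obtain a probability measure \(\Pi\) supported on \(\SVF\) satisfying
\[
    \mopt(\xi) = \frac{\int_{\SVF} v(\xi)^* \Da v(\xi)\, d\Pi(v)}{\int_{\SVF} |v(\xi)|^2\, d\Pi(v)} \quad \text{a.e. on } \Omo.
\]
The key move is to push this integral representation through the coordinate map \(\vc : \VF \to \Cn\). Writing \(v = \sum_i c_i(v) f_i\), define
\[
    \vSig := \int_{\SVF} \vc(v)\vc(v)^*\, d\Pi(v) \in \Hnn.
\]
Since \(\vc\) is continuous and \(\SVF\) is compact, the integrand is bounded in \(\Hnn\) and Bochner integrable, so \(\vSig\) is well-defined. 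Hermiticity is immediate, and positive semidefiniteness follows from \(\vx^*\vSig\vx = \int_{\SVF}|\vc(v)^*\vx|^2\, d\Pi(v) \geq 0\).

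The remainder is a direct calculation. Expanding \(v(\xi)^*\Da v(\xi) = \sum_{i,j} c_i(v)^* c_j(v) f_i(\xi)^* \Da f_j(\xi)\) and interchanging the finite sum with the Bochner integral yields
\[
    \int_{\SVF} v(\xi)^* \Da v(\xi)\, d\Pi(v) = \sum_{i,j}\vSig_{ij}\, f_i(\xi)^* \Da f_j(\xi) = \bprodlEn{\vf(\xi)}{\vSig\Da\vf(\xi)},
\]
and analogously \(\int_{\SVF}|v(\xi)|^2\,d\Pi(v) = \bprodlEn{\vf(\xi)}{\vSig\vf(\xi)}\). Substituting both identities into the quotient from Theorem~\ref{thm:optimalMultiplierQuotientOfAverages} delivers the claimed representation.

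The two points I expect to require care rather than ingenuity are, first, fixing the index convention so that \(\vSig_{ij} = \int c_i(v)^* c_j(v)\,d\Pi(v)\) pairs correctly with the convention \(\bprodlEn{\vx}{\vy} = \sum_k x_k^* y_k\); and second, the behavior on the \(\mu\)-null set where the denominator \(\bprodlEn{\vf}{\vSig\vf}\) vanishes. On that set the numerator vanishes identically as well, since \(\vc(v)^*\vf(\xi) = v(\xi) = 0\) must hold \(\Pi\)-a.s., so \(\mopt\) can be set to zero there without affecting its equivalence class in \(\LEOmo\). The existence step also implicitly uses the inheritance of the covering condition from \(\ZF = \emptyset\), which I would make fully explicit in a polished write-up.
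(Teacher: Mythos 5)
Your proposal is correct, and the existence part follows the paper's route exactly (coercivity of \(\Ewc\) on \(\LEOmo\) via \(\ZF=\emptyset\) and Theorem~\ref{thm:wcErrorCoerciveInL2}; the reliance on \(\SVF\) containing scaled generators is implicit in the paper's one-line argument as well, so making it explicit is an improvement rather than a deviation). The structural part, however, takes a genuinely different path. You treat the corollary as a literal corollary of Theorem~\ref{thm:optimalMultiplierQuotientOfAverages}: push the probability measure \(\Pi\) through the coordinate map and set \(\vSig = \int_{\SVF} \vc(v)\vc(v)^*\, d\Pi(v)\) (up to the conjugation you flag, so that \(\vSig_{ij}=\int c_i^* c_j\, d\Pi\) pairs with \(\bprodlEn{\cdot}{\cdot}\)), after which the representation is a finite sum--integral interchange; your handling of the set where \(\bprodlEn{\vf}{\vSig\vf}\) vanishes via \(\vSig^{1/2}\vf(\xi)=0\) is also sound. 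The paper instead works entirely in the finite-dimensional parametrization: it writes \(\Ewc(m)=\vphi(\vG(m))\) for the support function \(\vphi\) of \(\set{\vc\vc^*:\vc\in C}\), invokes the first-order optimality condition and the subdifferential chain rule to produce \(\vSig\in\partial\vphi(\vG(\mopt))\) with \(d\vG_{\mopt}^*(\vSig)=0\), and reads off the formula from Proposition~\ref{prop:gramMatrixDifferentiable}. Your route is shorter and avoids re-deriving the optimality analysis, at the cost of inheriting whatever care Theorem~\ref{thm:optimalMultiplierQuotientOfAverages} requires (its proof in the paper is itself only sketched); the paper's route is self-contained and, more importantly, locates \(\vSig\) inside the specific subdifferential \(\partial\vphi(\vG(\mopt))\), which is precisely the characterization exploited by the fixed-point iteration~\eqref{eq:fixedPointIterationRegularized} in the following subsection. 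So your argument proves the stated corollary, but the paper's extra structure is not merely cosmetic.
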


\begin{proof}[Proof of Corollary~\ref{cor:wcOptimalMultipliersFiniteDimensional}]
    \parskip = \pskip

    The existence of worst-case optimal multipliers follows from the hypothesis and Theorem~\ref{thm:wcErrorCoerciveInL2}. It will be useful to define the auxiliary convex function \(\vphi: \Hnn \to \R\)
    \[
        \vphi(\vX) := \sups_{\vc\in C}\, \trace(\vc\vc^* \vX)
    \]
    and \(\vGopt = \vG(\mopt)\) where \(\mopt\) is an optimal multiplier. Since \(\vphi\) is the support function of a compact set, we have that~\cite[Theorem~2.4.18]{zalinescu_convex_2002}
    \[
        \partial \vphi(\vGopt) = \cl(\cvxhull(\set{\vc\vc^*:\,\, \iprodF{\vc\vc^*}{\vGopt} = \Ewc(\mopt)})).
    \]
    From Proposition~\ref{prop:wcErrorIsProperClosedConvex} it follows that \(\Ewc\) is convex with full domain. Therefore, it is Gâteaux (or directionally) differentiable everywhere~\cite[Proposition~17.2]{bauschke_convex_2011}. The optimality condition at \(\mopt\) implies that
    \[
        \Delta m \in\LERd:\,\, \Ewc'(\mopt;\Delta m) \geq 0.
    \]
    However, this implies that~\cite[Theorem~17.9]{bauschke_convex_2011}
    \[
        0 \leq \sup_{\vSig \in \partial\vphi(\vGopt)}\, \iprodF{\vSig}{d\vG_{\mopt}(\Delta m)} = \sup_{\vSig \in \partial\vphi(\vGopt)}\, \iprodF{d\vG_{\mopt}^*(\vSig)}{\Delta m}
    \]
    where \(d\vG_{\mopt}^*\) is the adjoint of the derivative \(d\vG_{\mopt}\). It follows that there exists \(\vSig\in\partial\vphi(\vGopt)\) such that \(d\vG_{\mopt}^*(\vSig) = 0\). From Proposition~\ref{prop:gramMatrixDifferentiable} we deduce that
    \begin{align*}
        0 &= \iprodF{\vSig}{d_m \vG_{\mopt}(\Delta m)} \\
        &= -\frac{1}{2}\int_{\Omo}\real(\Delta m(\xi)^* \bprodF{\vf(\xi)}{\vSig(\Da\vf(\xi) - \mopt(\xi)\vf(\xi))}) d\mu(\xi)  \\
        &\quad\:-\frac{1}{2}\int_{\Omo}\real(\Delta m(\xi) \bprodF{\vSig(\Da\vf(\xi) - \mopt(\xi)\vf(\xi))}{\vf(\xi)})d\mu(\xi)\\
        &= -\real\left(\int_{\Omo} \Delta m(\xi) \bprodF{\vSig(\Da\vf(\xi) - \mopt(\xi)\vf(\xi))}{\vf(\xi)}\, d\mu(\xi)\right)
    \end{align*}
    for any \(\Delta m\in \LERd\). We conclude that
    \[
        \bprodF{\vSig(\Da\vf- \mopt\vf)}{\vf} = 0
    \]
    from where the claim follows.
\end{proof}

To interpret the result it will be useful to define the positive semidefinite form
\[
    \bprodS{\vx}{\vy} := \bprod{\vx}{\vSig\vy}
\]
and to denote as \(\nrmS{\cdot}\) the induced seminorm. Then a worst-case optimal multiplier can be represented as
\[
    \mopt(\xi) = \frac{\bprodS{\vf(\xi)}{\Da\vf(\xi)}}{\nrmS{\vf(\xi)}^2}
\]
for almost every \(\xi\in\Omo\). When \(\vSig\) is positive-definite, this is precisely the coefficient for the orthogonal projection of \(\Da\vf(\xi)\) onto \(\vf(\xi)\) with respect to the complex inner product \(\bprodS{\cdot}{\cdot}\). Therefore, minimizing the worst-case error {\em implicitly} finds a complex inner product on \(\Cn\) for which the optimal multiplier yields the coefficients for the {\em pointwise} orthogonal projection of \(\Da \vf\) onto \(\vf\).

This motivates the following definition.

\begin{definition}[\(\Sigma\)-multiplier]
    We say that \(m:\Omo\to \C\) is a \(\Sigma\)-multiplier if there exists \(\vSig\in\Hnn\) positive semidefinite such that
    \[
        m = \frac{\bprodS{\vf}{\Da\vf}}{\nrmS{\vf}}.
    \]
    If \(\vSig\in\Hnn\) is positive semidefinite, then we denote as \(m_{\Sig}\) the \(\Sigma\)-multiplier defined by the above expression.
\end{definition}

Any \(\Sigma\)-multiplier can be interpreted as the quotient of expected values as in~\eqref{eq:wcOptimalMultiplierAsQuotientOfExpectations}. Let \(\pi_1 \geq \ldots \geq \pi_n \geq 0\) be the eigenvalues of \(\vSig\) normalized so that their sum equals one, and let \(\Pi\) be the induced probability measure on \(2^{\mX_n}\). The eigenvectors of \(\vSig\) induce functions \(v_1,\ldots, v_n \in \VF\) from which we can define the random variable \(X: \mX_n \to \VF\) with \(X(i) = v_i\) whence
\[
    m = \frac{\sums_{i=1}^n \pi_i v_i^* \Da v_i}{\sums_{i=1}^n \pi_i |v_i|^2} = \frac{\ev\bset{X^* \Da X}}{\ev\bset{|X|^2}}.
\]
From now on, for any \(\vSig\in \Hnn\) positive semidefinite we let \(m_{\Sigma}\) denote the \(\Sigma\)-multiplier associated to it.

A \(\Sigma\)-multiplier is the quotient of two real-analytic functions, and thus may have singularities at some points on its domain. Under rather mild conditions we can show that a \(\Sigma\)-multiplier is bounded on \(\Omo\).
\begin{proposition}\label{prop:sigmaMultiplierBoundedForPSDMatrix}
    Suppose that \(\ZF = \emptyset\). If \(\vSig\in\Hnn\) is positive definite then \(m_{\Sigma}\) is real-analytic on \(\Omo\).
\end{proposition}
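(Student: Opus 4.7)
The plan is to express $m_\Sigma$ explicitly as a quotient of two real-analytic $\C$-valued functions on $\Rd$ and then argue that the denominator is non-vanishing on $\Omo$ thanks to the hypothesis $\ZF = \emptyset$ and the positive-definiteness of $\vSig$.

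First I would write out the numerator and denominator componentwise, namely
\[
    \bprodS{\vf(\xi)}{\Da\vf(\xi)} = \sums_{i,j=1}^n f_i(\xi)^*\,\Sigma_{ij}\,f_j(\alpha\xi),\qquad
    \nrmS{\vf(\xi)}^2 = \sums_{i,j=1}^n f_i(\xi)^*\,\Sigma_{ij}\,f_j(\xi).
\]
Each $f_j$ is real-analytic on $\Rd$, a fact already recorded at the start of Section~\ref{sec:adaptiveMultipliers} via Paley--Wiener--Schwartz. Real-analyticity is preserved under (i) the real linear dilation $\xi\mapsto \alpha\xi$, (ii) pointwise complex conjugation of $\C$-valued real-analytic functions (it simply conjugates the coefficients of the local power series in the real variables), and (iii) finite sums and products. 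Therefore both the numerator and denominator are real-analytic $\C$-valued functions on $\Rd$.

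Next I would verify that the denominator is strictly positive on $\Omo$. Since $\vSig$ is Hermitian positive definite, $\nrmS{\vx}^2 = \bprodlEn{\vx}{\vSig\vx} > 0$ whenever $\vx\neq 0$. By the definition~\eqref{eq:def:nullSet} of $\ZF$, the condition $\vf(\xi)=0$ is equivalent to $\xi\in\ZF$. The hypothesis $\ZF = \emptyset$ then gives $\vf(\xi)\neq 0$ for every $\xi\in\Omo$, whence $\nrmS{\vf(\xi)}^2 > 0$ on $\Omo$. Because $\nrmS{\vf(\cdot)}^2$ is continuous and $\Omo$ is compact, it in fact remains strictly positive on some open neighborhood $U\supset\Omo$.

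Finally, I would invoke the standard fact that the quotient of two real-analytic functions is real-analytic on the open set where the denominator does not vanish (this follows from the local inversion of the power series of the denominator, or equivalently from the chain rule applied to $1/z$ composed with a non-vanishing real-analytic map). Applied on $U$, this yields real-analyticity of $m_\Sigma$ on $U$, and in particular on $\Omo$. There is no substantive obstacle here: the only point that needs a moment of care is confirming that conjugation of a $\C$-valued real-analytic function of real variables preserves real-analyticity, which is immediate because the local Taylor expansion of $\overline{f}$ is obtained by conjugating the coefficients of the local Taylor expansion of $f$.
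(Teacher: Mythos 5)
Your proof is correct and follows essentially the same route as the paper's: positive-definiteness of \(\vSig\) bounds \(\nrmS{\vf}\) below by a multiple of \(\nrmlEn{\vf}\), the hypothesis \(\ZF=\emptyset\) makes the denominator non-vanishing on (a neighborhood of) the compact set \(\Omo\), and real-analyticity of the quotient follows. You simply spell out in more detail the standard closure properties of real-analytic functions that the paper leaves implicit.
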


\begin{proof}[Proof of Proposition~\ref{prop:sigmaMultiplierBoundedForPSDMatrix}]
    \parskip = \pskip

    The result follows from the following observation. Since \(\vSig\) is positive-definite, there exists \(c > 0\) such that \(\nrmS{\vz} \geq c\nrmlEn{\vz}\) for any \(\vz\in\Cn\). Since \(\ZF = \emptyset\) this implies that \(\nrmS{\vf} \neq 0\) on \(\Omo\). Therefore, the denominator does not vanish on a neighborhood on \(\Omo\) whence \(m_{\Sigma}\) is real-analytic on \(\Omo\).
\end{proof}

%% ============================================================================
\subsubsection{Average-error criteria}
%% ============================================================================

When \(\mF\) is finite we have the representation
\[
    \Ewc(m) = \sups_{\vc \in C}\, \iprodlEn{\vc}{\vG(m)\vc}.
\]
This yields insight into two different forms of worst-case error for which the optimal multipliers are still \(\Sigma\)-multipliers. For the first, remark that we can write this equivalently as
\begin{equation}\label{eq:wcErrorGeneralized}
    \Ewc(m) = \sups_{\vW \in W}\, \iprodF{\vW}{\vG(m)} = \sigma_W(\vG(m))
\end{equation}
for the set
\[
    W = \set{\vc\vc^*:\,\, \vc\in C}
\]
where \(\sigma_W\) denotes the support function of \(W\). However, we can still define the worst-case error for compact sets \(W\in \Hnn\) that are not of the above form. The following result shows that by minimizing a worst-case error defined in this manner the optimal multiplier is \(\Sigma\)-multiplier. We omit the proof for brevity.

\begin{proposition}
    Let \(W\subset \Hnn\) be compact and suppose that it contains a neighborhood of the origin. Let \(\Ewc : \LEOmo \to \R\) be defined as in~\eqref{eq:wcErrorGeneralized}. Then there exists at least one optimal multiplier for \(\Ewc\) and every optimal multiplier is a \(\Sigma\)-multiplier.
\end{proposition}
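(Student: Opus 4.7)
The plan is to parallel the proof of Corollary~\ref{cor:wcOptimalMultipliersFiniteDimensional} with the support function \(\vphi(\vX) := \sigma_W(\vX)\) replacing the one associated to \(\set{\vc\vc^*:\, \vc\in C}\). Since \(\sigma_W = \sigma_{\cl(\cvxhull(W))}\), I will assume throughout that \(W\) is closed and convex, which preserves all the relevant quantities.

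For existence of an optimal multiplier, I would first note that \(\Ewc = \sigma_W\circ \vG\) is convex and continuous on \(\LEOmo\): the support function of a compact set is finite, sublinear, and continuous on \(\Hnn\), and \(\vG\) is Fréchet differentiable by Proposition~\ref{prop:gramMatrixDifferentiable}. For coercivity I would exploit the neighborhood-of-origin hypothesis, which yields some \(r > 0\) with \(\set{\vW \in \Hnn:\, \nrmF{\vW}\leq r}\subset W\), hence \(\sigma_W(\vX)\geq r\nrmF{\vX}\) on \(\Hnn\). Since \(\vG(m)\succeq 0\), the bound \(\nrmF{\vG(m)}\geq \trace(\vG(m))/\sqrt{n}\) combined with \(\trace(\vG(m)) = \tfrac{1}{2}\int_{\Omo}\nrmlEn{\Da\vf(\xi) - m(\xi)\vf(\xi)}^2\, d\mu(\xi)\) and the pointwise inequality \((at-b)^2\geq \tfrac{1}{2}a^2 t^2 - b^2\) already used in the proof of Theorem~\ref{thm:wcErrorCoerciveInL2}, together with the non-vanishing of the continuous function \(\sum_{i}|f_i|^2\) on the compact set \(\Omo\), yield \(\Ewc(m) \geq C_1\nrmLE{m}^2 - C_2\). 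Existence of a minimizer \(\mopt\) then follows from standard weak lower semicontinuity and coercivity arguments in the reflexive Hilbert space \(\LEOmo\).

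For the structural claim, I would apply the first-order condition. Since \(\Ewc\) is convex with full domain, \(0\in \partial \Ewc(\mopt)\). Because \(\sigma_W\) is a continuous convex function and \(\vG\) is Fréchet differentiable, the subdifferential chain rule gives \(\partial \Ewc(\mopt) = d\vG_{\mopt}^*\bigl(\partial \sigma_W(\vGopt)\bigr)\), with \(\partial \sigma_W(\vGopt) = \set{\vSig\in W:\, \iprodF{\vSig}{\vGopt} = \sigma_W(\vGopt)}\) the exposed face of \(W\) at \(\vGopt\). Hence some \(\vSig \in \partial \sigma_W(\vGopt)\) satisfies \(d\vG_{\mopt}^*(\vSig) = 0\). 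Substituting Proposition~\ref{prop:gramMatrixDifferentiable} and letting \(\Delta m\) range over \(\LEOmo\), exactly as in the proof of Corollary~\ref{cor:wcOptimalMultipliersFiniteDimensional}, produces \(\bprodF{\vSig(\Da\vf - \mopt\vf)}{\vf} = 0\) almost everywhere on \(\Omo\), which rearranges to the required pointwise quotient wherever the denominator is non-zero.

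The delicate step, which I expect to be the main technical obstacle, is to guarantee that the subgradient \(\vSig\) can be chosen positive semidefinite so that the quotient matches the definition of a \(\Sigma\)-multiplier rather than merely a quotient formed from a Hermitian matrix. The geometric leverage is that \(\vGopt\succeq 0\) makes the linear functional \(\iprodF{\cdot}{\vGopt}\) monotone in the Löwner order, while the neighborhood-of-origin hypothesis ensures that \(W\) contains small positive semidefinite perturbations. The plan is to combine these observations to argue that the exposed face \(\partial \sigma_W(\vGopt)\) admits a positive semidefinite representative whose image under \(d\vG_{\mopt}^*\) still vanishes, after which the quotient above exhibits \(\mopt\) as a genuine \(\Sigma\)-multiplier. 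The remaining pieces (existence of the minimizer and the quotient form) are direct adaptations of the arguments developed for Corollary~\ref{cor:wcOptimalMultipliersFiniteDimensional}.
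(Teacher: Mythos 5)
The paper omits its own proof of this proposition, so your proposal can only be judged against the argument it is clearly meant to parallel, namely Corollary~\ref{cor:wcOptimalMultipliersFiniteDimensional}. Your architecture is the right one: coercivity from $\sigma_W(\vX)\geq r\nrmF{\vX}$ together with the trace lower bound, then the optimality condition and the separation argument yielding some $\vSig$ in the exposed face with $d\vG_{\mopt}^*(\vSig)=0$, and finally the identity $\bprodF{\vSig(\Da\vf-\mopt\vf)}{\vf}=0$. (Note that you invoke the non-vanishing of $\sum_i|f_i|^2$ on $\Omo$, which is not among the stated hypotheses of this proposition; it is genuinely needed, exactly as in Corollary~\ref{cor:wcOptimalMultipliersFiniteDimensional}, so it must be treated as a standing assumption.)

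The genuine gap is the step you flag yourself and then leave as a ``plan'': producing a \emph{positive semidefinite} $\vSig$. For a general compact $W$ containing a neighborhood of the origin, the exposed face $\partial\sigma_W(\vGopt)=\set{\vSig\in W:\, \iprodF{\vSig}{\vGopt}=\sigma_W(\vGopt)}$ need not contain any PSD matrix even though $\vGopt\succeq 0$. Concretely, take $n=2$, $W=\cvxhull(\set{\diag(1,-1)}\cup\set{\vX\in\Hnn:\,\nrmF{\vX}\leq r})$ with $r\in(0,1)$, and a PSD point such as $\diag(1,0)$: the face there is the singleton $\set{\diag(1,-1)}$, which is indefinite. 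Positivity of the functional $\iprodF{\cdot}{\vGopt}$ on the PSD cone does not push its maximizer over $W$ into the cone, so the ``geometric leverage'' you describe does not close the argument. Either the proposition requires an additional hypothesis on $W$ (for instance the unitary invariance and orthosymmetry assumed later for the fixed-point iteration, under which the face at a PSD point always admits a PSD representative diagonal in the eigenbasis of $\vGopt$), or a genuinely different argument is needed. A related secondary issue: without such a hypothesis $\sigma_W$ is not monotone for the L\"owner order on the PSD cone, so $\Ewc=\sigma_W\circ\vG$ need not be convex; then (i) the existence argument cannot rest on ``standard weak lower semicontinuity'' in the infinite-dimensional space $\LEOmo$, since a supremum of indefinite quadratics is not weakly lower semicontinuous in general, and (ii) the condition $0\in\partial\Ewc(\mopt)$ must be replaced by the directional-derivative optimality condition $\Ewc'(\mopt;\Delta m)\geq 0$, which fortunately still feeds the separation argument because $\sigma_W$ itself is convex and $\vG$ is Fr\'echet differentiable.
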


In the finite-dimensional case it is also usual to define notions of average error instead of worst-case error. In particular, we may define
\[
    e_A(m) := \ev_{\vc\sim \Pi}\bset{e(\vc, m)} = \ev_{\vc\sim \Pi}\bset{\iprodF{\vc\vc^*}{\vG(m)}}
\]
where \(\Pi\) is a probability measure on \(\Cd\). If \(\vc\sim \Pi\) is centered, then
\[
    e_A(m) = \iprodF{\vC}{\vG(m)}
\]
for the covariance matrix \(\vC\). The eigenvectors of the covariance matrix \(\vC\) allows us to find \(v_1,\ldots, v_n\in\VF\) such that
\[
    e_A(m) = \frac{1}{2}\sums_{i=1}^n\sigma_i^2\int_{\Omo}|\Da v_i(\xi) - m(\xi) v_i(\xi)|^2 d\mu(\xi).
\]
where \(\sigma_1^2 \geq \ldots \geq \sigma_n^2 \geq 0\) are the eigenvalues of \(\vC\). It is straightforward to verify that the optimal multiplier exists. In this case, we call it the {\em average-case optimal multiplier}. Note that it is also a \(\Sigma\)-multiplier. Hence, every \(\Sigma\)-multiplier for \(\vSig\) positive definite is the average-case optimal multiplier for \(\vc\sim \mN(0,\vSig)\).

Interestingly, when \(\vc\sim \mN(0,\vI_n)\) is a real Gaussian vector then
\[
    \mopt = \frac{\sums_{i=1}^n f_i^* \Da f_i}{\sums_{i=1}^n |f_i|^2} = m_I.
\]
We call this the {\em trace multiplier}. Its advantage is that it is easily computed from the collection \(\mF\) and its optimality can be rigorously characterized by the above arguments. We have the following important result. We omit its proof for brevity.

\begin{proposition}
    If \(\ZF = \emptyset\) then the trace multiplier is bounded on \(\Omo\).
\end{proposition}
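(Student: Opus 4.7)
The plan is to recognize that the trace multiplier is exactly the $\Sigma$-multiplier for $\vSig = \vI_n$ and then invoke Proposition~\ref{prop:sigmaMultiplierBoundedForPSDMatrix}. Taking $\vSig = \vI_n$ reduces $\bprodS{\cdot}{\cdot}$ to the standard Euclidean inner product on $\Cn$, so
\[
    m_{\vI_n} = \frac{\bprodlEn{\vf}{\Da\vf}}{\nrmlEn{\vf}^2} = \frac{\sums_{i=1}^n f_i^*\, \Da f_i}{\sums_{i=1}^n |f_i|^2} = m_I.
\]
Since $\vI_n$ is positive definite and $\ZF = \emptyset$ by hypothesis, Proposition~\ref{prop:sigmaMultiplierBoundedForPSDMatrix} applies directly and yields that $m_I$ is real-analytic on $\Omo$. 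Real-analyticity implies continuity, and a continuous function on the compact set $\Omo$ is automatically bounded.

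If one prefers a self-contained argument that does not appeal to Proposition~\ref{prop:sigmaMultiplierBoundedForPSDMatrix}, I would proceed as follows. The functions $f_i$ are restrictions of entire functions to $\Rd$, hence continuous, and so are the dilates $\Da f_i = f_i(\alpha\,\cdot)$. Therefore the numerator and denominator of $m_I$ are both continuous on $\Omo$. The hypothesis $\ZF = \emptyset$ says precisely that at each $\xi \in \Omo$ at least one $f_i(\xi)$ is nonzero, so $\sums_{i=1}^n |f_i(\xi)|^2 > 0$ pointwise on $\Omo$; by compactness of $\Omo$, this strictly positive continuous function attains a strictly positive minimum $c > 0$. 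A Cauchy--Schwarz estimate on the numerator then gives
\[
    |m_I(\xi)|^2 \leq \frac{\sums_{i=1}^n |\Da f_i(\xi)|^2}{\sums_{i=1}^n |f_i(\xi)|^2} \leq \frac{1}{c}\sups_{\xi\in\Omo} \sums_{i=1}^n |\Da f_i(\xi)|^2,
\]
and the right-hand side is finite since each $|\Da f_i|^2$ is continuous on the compact set $\Omo$.

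The only step where the hypothesis $\ZF = \emptyset$ enters is the non-vanishing (and, by compactness, uniform positivity) of the denominator, and I do not foresee any obstacle there: without this hypothesis the conclusion would simply fail, since $m_I$ would develop genuine singularities at points of $\ZF \cap \Omo$.
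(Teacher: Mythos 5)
Your proposal is correct, and it takes the route the paper clearly intends: the paper omits the proof of this proposition ``for brevity,'' but the trace multiplier is exactly the \(\Sigma\)-multiplier with \(\vSig = \vI\), which is positive definite, so Proposition~\ref{prop:sigmaMultiplierBoundedForPSDMatrix} applies verbatim and gives real-analyticity (hence boundedness, by compactness of \(\Omo\)). Your alternative self-contained argument --- continuity of numerator and denominator, uniform positivity of \(\sums_{i=1}^n |f_i|^2\) on the compact set \(\Omo\) via \(\ZF = \emptyset\), and the Cauchy--Schwarz bound --- is also sound and is essentially the content of the omitted proof unpacked.
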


%% ============================================================================
\subsection{An optimization algorithm}
%% ============================================================================

Let
\[
    \Ewc(m) = \sigma_W(\vG(m)).
\]
Without loss, we may assume that \(W\) is convex and compact, and that it contains the origin. The proof of Corollary~\ref{cor:wcOptimalMultipliersFiniteDimensional} suggests the following method to find an optimal multiplier. We introduce the main ideas informally first. The optimal multiplier \(\mopt\) is characterized by
\[
    \exists\, \vSig\in \partial\sigma_W(\vG(\mopt)):\,\, d\vG_{\mopt}^*(\vSig) = 0.
\]
The inclusion \(\vSig\in \partial\sigma_W(\vG)\) can be represented for any \(\tG,\tS > 0\) as
\[
    \vSig \in \partial\sigma_W(\vG) \quad\Leftrightarrow\quad \tau_G\vG \in \partial\ind_{W}(\vSig)\quad\Leftrightarrow\quad \vSig = \proj_{W}(\tG\vG + \tS \vSig).
\]
The first equivalence follows from~\cite[Proposition~16.9]{bauschke_convex_2011} by using the fact that \(\sigma_W\) is proper, lower semicontinuous and convex, and Fenchel-Moreau's theorem~\cite[Theorem~13.32]{bauschke_convex_2011}. The second equivalence follows from the fact that \(\partial \ind_W = N_W\)~\cite[cf. Proposition~16.13]{bauschke_convex_2011}. Furthermore, the proof of Corollary~\ref{cor:wcOptimalMultipliersFiniteDimensional} also shows that
\[
    d\vG_m^*(\vSig) = 0\quad\Leftrightarrow\quad m = \frac{\bprodS{\vf}{\Da\vf}}{\nrmS{\vf}}.
\]
Therefore, the optimality condition for \(\mopt\) is equivalent to
\[
    \exists\, \vSig\in\Hnn:\,\, \mopt = \frac{\bprodS{\vf}{\Da\vf}}{\nrmS{\vf}^2}\quad\mbox{and}\quad \vSig = \proj_{W}(\tS \vSig + \tG \vG(\mopt)).
\]
This characterization is {\em finite-dimensional}, depending only on the existence of a suitable Hermitian matrix \(\vSig\). Implicit in this characterization is the {\em fixed-point iteration}
\begin{align}\label{eq:fixedPointIteration}
\begin{split}
    m^{(k+1)} &= \frac{\bprod{\vf}{\Da\vf}_{\Sig^{(k)}}}{\nrm{\vf}^2_{\Sig^{(k)}}}\\
    \vSig^{(k+1)} &= \proj_{W}(\tS\vSig^{(n)} + \tG\vG(m^{(k+1)}))
\end{split}
\end{align}
Although our empirical results show that this iteration works well in practice, proving its convergence requires showing that any choice \(\vSig\in W\) yields \(m_{\Sigma} \in\LERd\). Otherwise, the iteration becomes undefined. Instead of attempting to do this, we regularize this fixed-point iteration as follows. We will assume that \(W\) is {\em invariant under unitary conjugations}
\[
    \forall\, \vU\in \Unn,\,\, \vX\in W:\,\,  \vU\adj\vX \vU \in W
\]
where \(\Unn\) is the set of unitary matrices in \(\Cnn\). From this it follows that there exists a set \(D_{W}\subset \Rn\) invariant under permutations such that
\begin{equation}\label{eq:spectrallyInvariantRepresentation}
    W = \set{\vU\diag(\vd) \vU\adj:\,\, \vd\in D_{W}}.
\end{equation}
We will assume that \(D_{W}\) is convex and that it is {\em orthosymmetric}~\cite{donoho_minimax_1990}
\begin{equation}\label{eq:orthosymmetricSet}
    \forall\, \vd\in D_{W},\,\,\vs \in \set{-1,1}^n:\,\, \diag(\vs) \vd\in D_{W}.
\end{equation}
Let \(\delta > 0\) and consider the regularized optimization problem
\[
    \begin{aligned}
        & \underset{}{\text{minimize}}
        & & \sigma_W(\vG(m)) + \delta\trace(\vG(m))
        & \text{subject to}
        & & m\in \LEOmo.
    \end{aligned}
\]
Using the same arguments as before, we see that it induces the regularized fixed-point iteration
\begin{align}\label{eq:fixedPointIterationRegularized}
\begin{split}
    m^{(k+1)} &= \frac{\bprod{\vf}{\Da\vf}_{\delta I + \Sig^{(k)}}}{\nrm{\vf}^2_{\delta I + \Sig^{(k)}}}\\
    \vSig^{(k+1)} &= \proj_{W}(\tS\vSig^{(n)} + \tG\vG(m^{(k+1)}))
\end{split}
\end{align}
for \(k \in \No\). We typically choose \(\vSig^{(0)}= \vI\) so that that \(m^{(0)}\) corresponds to the trace multiplier, which by Proposition~\ref{prop:sigmaMultiplierBoundedForPSDMatrix} is bounded when \(\ZF = \emptyset\). The proof of the following theorem is deferred to Appendix~\ref{apx:pf:thm:iterationConvergence} which also provides precise bounds for the parameters \(\delta,\tS\) and \(\tG\).

\begin{theorem}\label{thm:iterationConvergence}
    Suppose that
    \[
        \trace(\vSig^{(0)}) \leq \sup\set{\trace(\vX):\,\, \vX\in W} < \infty.
    \]
    Then there exists a constant \(C_{\textrm{FPI}} > 0\) independent of \(\delta,\tS,\tG\) such that if \(\tS,\tG\) satisfy
    \[
        \tS + C_{\textrm{FPI}}\frac{\tG}{\delta^3} < 1
    \]
    then fixed-point iteration~\eqref{eq:fixedPointIterationRegularized} converges to a unique fixed point.
\end{theorem}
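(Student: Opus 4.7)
The plan is to recast the iteration~\eqref{eq:fixedPointIterationRegularized} as a single fixed-point equation on \(W\) and then apply the Banach fixed-point theorem. Define \(T: W \to W\) by
\[
    T(\vSig) := \proj_W(\tS\,\vSig + \tG\,\vG(m_{\delta\vI + \vSig})),
\]
where \(m_{\delta\vI+\vSig}\) denotes the \(\Sigma\)-multiplier associated to the positive definite matrix \(\delta\vI+\vSig\). By Proposition~\ref{prop:sigmaMultiplierBoundedForPSDMatrix}, this multiplier is real-analytic and bounded on \(\Omo\) whenever \(\ZF = \emptyset\), so \(\vG(m_{\delta\vI+\vSig})\in\Hnn\) is well-defined and \(T\) maps \(W\) into itself since \(\proj_W\) outputs in \(W\). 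A fixed point of \(T\), paired with the corresponding multiplier, exactly satisfies the optimality system derived in the discussion preceding the theorem.

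The core of the argument is to show that \(T\) is a strict contraction with Lipschitz constant bounded by \(\tS + C_{\textrm{FPI}}\,\tG/\delta^3\). Since orthogonal projections onto convex sets are non-expansive,
\[
    \nrmF{T(\vSig_1) - T(\vSig_2)} \leq \tS\,\nrmF{\vSig_1 - \vSig_2} + \tG\,\nrmF{\vG(m_1) - \vG(m_2)},
\]
where \(m_i := m_{\delta\vI+\vSig_i}\). The main technical work is a two-stage Lipschitz estimate. First, I would control the multiplier pointwise using the quotient representation: both numerator and denominator of \(m_{\delta\vI+\vSig}(\xi)\) are linear in \(\vSig\), the denominator is bounded below by \(\delta\,\nrmlEn{\vf(\xi)}^2\), and \(\nrmlEn{\vf(\cdot)}\) is bounded away from zero uniformly on \(\Omo\) under \(\ZF=\emptyset\). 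A quotient-rule computation then yields \(\nrmLInf{m_1 - m_2} \leq C_1\,\delta^{-2}\,\nrmF{\vSig_1 - \vSig_2}\). Second, because \(\vG\) is quadratic in \(m\) and the multipliers \(m_i\) are uniformly bounded by a multiple of \(\delta^{-1}\) on \(\Omo\), expanding \(\vG(m_1) - \vG(m_2)\) as in Proposition~\ref{prop:gramMatrixDifferentiable} gives \(\nrmF{\vG(m_1) - \vG(m_2)} \leq C_2\,\delta^{-1}\,\nrmLInf{m_1 - m_2}\). Composing these produces the desired bound \(\nrmF{\vG(m_1) - \vG(m_2)} \leq C_{\textrm{FPI}}\,\delta^{-3}\,\nrmF{\vSig_1 - \vSig_2}\).

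The role of the spectral invariance~\eqref{eq:spectrallyInvariantRepresentation} and orthosymmetry~\eqref{eq:orthosymmetricSet} of \(W\), together with the trace hypothesis on \(\vSig^{(0)}\), is to guarantee that the iterates remain in a bounded region of \(W\) where the operator norms of elements of \(W\) and their action on \(\vf(\xi)\) are controlled by constants independent of \(\delta,\tS,\tG\); this is what makes \(C_{\textrm{FPI}}\) absolute. With these ingredients in hand, the condition \(\tS + C_{\textrm{FPI}}\,\tG/\delta^3 < 1\) ensures that \(T\) is a strict contraction on the closed bounded subset of \(W\) containing \(\vSig^{(0)}\), and Banach's fixed-point theorem furnishes the unique fixed point to which the iteration converges.

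The principal obstacle I anticipate is the quantitative pointwise estimate for \(m_1 - m_2\) with the correct \(\delta^{-2}\) dependence, and more specifically verifying that the lower bound on the denominator \(\bprod{\vf(\xi)}{(\delta\vI+\vSig)\vf(\xi)}\) depends only on \(\delta\) and on fixed data (\(\mF\), \(\Omo\), \(\alpha\), \(\mu\)), not on \(\vSig\in W\). Once that uniform control is obtained, the remaining steps reduce to triangle-inequality estimates and straightforward expansions of the quadratic form defining \(\vG\).
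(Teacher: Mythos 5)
Your overall strategy is the paper's: collapse the two-step iteration into a single map \(\vSig \mapsto \proj_W(\tS\vSig + \tG\vG(m_{\delta\vI+\vSig}))\), use non-expansiveness of the projection, and establish a contraction by composing a Lipschitz bound \(\vSig \mapsto m\) of order \(\delta^{-2}\) with a Lipschitz bound \(m\mapsto \vG(m)\) of order \(\delta^{-1}\) (the latter because \(\vG\) is quadratic and the multipliers are uniformly of size \(\delta^{-1}\)). That matches the appendix almost step for step, including the origin of the \(\delta^{-3}\).

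The one genuine gap is exactly the point you flag as your ``principal obstacle'' and then leave unresolved: the lower bound \(\bprod{\vf(\xi)}{(\delta\vI+\vSig)\vf(\xi)} \geq \delta\nrmlEn{\vf(\xi)}^2\) holds \emph{only if} \(\vSig\succeq 0\), and a general element of \(W\) need not be positive semidefinite (orthosymmetry of \(D_W\) forces \(W\) to contain \(-\vX\) whenever it contains \(\vU\diag(\vd)\vU\adj\) with \(\vd\in D_W\)). So your map \(T\) is not well defined on all of \(W\), and without positive semidefiniteness of the iterates the denominator can vanish and every subsequent estimate collapses. The paper closes this by (i) observing that \(\vG(m)\) is always positive semidefinite (it is an integral of rank-one Gram terms), so the pre-projection matrix \(\tS\vSig + \tG\vG(m)\) is PSD whenever \(\vSig\) is, and (ii) proving a small lemma that \(\proj_W\) maps PSD matrices to PSD matrices --- this is where the unitary invariance and orthosymmetry of \(W\) are actually used: the projection diagonalizes in the eigenbasis of the input and reduces to projecting the (nonnegative) eigenvalue vector onto the orthosymmetric convex set \(D_W\), which preserves signs. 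Consequently the correct domain for the fixed-point argument is not \(W\) but \(\Kd := \set{\vX\succeq 0,\ \trace(\vX)\leq \Delta}\) with \(\Delta\) at least \(\sup\set{\trace(\vX):\vX\in W}\); the trace hypothesis on \(\vSig^{(0)}\) puts the initial iterate there, and the invariance \(H(\Kd)\subset\Kd\) is what makes all your constants uniform. One further small remark: your pointwise (\(\LInf\)) Lipschitz estimate for \(m_1-m_2\) requires \(\nrmlEn{\vf}\) bounded away from zero on \(\Omo\), i.e.\ \(\ZF=\emptyset\); the paper instead works with \(\LE(\mu)\) bounds whose constants involve integrals of \(\nrmlEn{\Da\vf}^{2}/\nrmlEn{\vf}^{2}\) and \(\nrmlEn{\Da\vf}^{4}/\nrmlEn{\vf}^{4}\), which is a marginally weaker integrability requirement but the same idea. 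With the PSD-preservation lemma supplied, your argument goes through.
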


%% ============================================================================
\subsection{Multipliers and families of translates}
\label{sec:adaptiveMultipliersTranslates}
%% ============================================================================

Extrapolation in frequency using multipliers is often interpreted as a convolution by a filter that enhances the details contained in a low-frequency approximation. In the trivial case when \(\mF\) consists of a single element \(f\) that does not vanish in \(\Omo\) then the multiplier that exactly extrapolates the frequency information is simply
\begin{equation}\label{eq:perfectExtrapolationSingleFunction}
    \mf = \frac{f^*\Da f}{|f|^2} = \frac{\Da f}{f}
\end{equation}
Hence, the associated convolution filter allows us to enhance some details in the original function up to a factor \(\alp\). In fact, using the notation in Section~\ref{sec:adaptiveMultipliers}, we see that
\[
    f|_{\Ome}(\xi) = \Da f|_{\alp\Omo \setminus \Omo}\left(\frac{\xi}{\alp}\right) = (\mf f)|_{\alp\Omo \setminus \Omo}\left(\frac{\xi}{\alp}\right)
\]
whence
\begin{equation}\label{eq:perfectSuperresSingleFunction}
    \ue = \uo + \alp^d \Da(\etaf\ast \uo)
\end{equation}
where \(\ue\) is the inverse Fourier transform of \(f|_{\Omo}\) and \(\etaf\) is the inverse Fourier transform of \(\chi_{\Omo\setminus\alp^{-1}\Omo} \mf\). Since a convolution is {\em translation equivariant}, we would expect that there is a representation like~\eqref{eq:perfectSuperresSingleFunction} for any translate \(\uxo\) of \(u\). However, if \(\fxo(\xi) = e^{-2\pi i \xi\cdot \xo}f(\xi)\) then~\eqref{eq:perfectExtrapolationSingleFunction} yields
\[
    \frac{\fxo(\xi)^*\Da \fxo(\xi)}{|\fxo(\xi)|^2} = e^{-2\pi i (\alp - 1) \xi\cdot \xo} \mf(\xi).
\]
In this case, the associated filter is
\[
    \int \chi_{\Omo\setminus \alp^{-1}\Omo}(\xi) \mf(\xi) e^{-2(\alpha - 1)\pi i \xi\cdot \xo} e^{2\pi i \xi\cdot x}\, d\xi = \frac{1}{n} \etaf(x + (\alp-1)\xo) = \tau_{(\alp - 1)\xo} \etaf(x)
\]
whence~\eqref{eq:perfectSuperresSingleFunction} becomes
\begin{align*}
    \uxoe &= \uxoo + \alp^d \Da (\tau_{(\alp - 1)x_0} \etaf\ast \uxoo) \\
    &= \uxoo + \alp^d \Da\tau_{(\alp - 1)x_0}( \etaf\ast \uxoo)\\
    &= \uxoo + \alp^d \tau_{(\alp - 1)x_0 /\alp}\Da( \etaf \ast \uxoo).
\end{align*}
where \(\uxoo\) is the inverse Fourier transform of \(\fxo|_{\Omo}\). The translation in the second term is a consequence of the fact that extrapolation in frequency using multipliers leads to the details of a function contracted by a factor \(\alp^{-1}\). In particular, if we used \(\mf\) in~\eqref{eq:perfectExtrapolationSingleFunction} instead of \(m\) we would have obtained
\begin{equation}\label{eq:perfectSuperresMisplacedDetails}
    \uxoe \approx \uxoo + \alp^d \Da( \etaf \ast \uxoo).
\end{equation}
In this expression, the details are correct but {\em misplaced}. This effect can be studied more generally using \(\Sig\)-multipliers when \(\mF\) consists of translates of a single function, i.e.,
\[
    \mF = \set{f,f_{x_1},\ldots, f_{x_{n-1}}}\quad\mbox{with}\quad f_{x_k}(\xi) = e^{-2\pi i \xi\cdot x_k}f(\xi)\quad\mbox{for}\quad k\in\mX_{n-1}.
\]
Any \(\Sig\)-multiplier {\em must} compromise when approximating the details of each function. Since we assume that \(f\) does not vanish in \(\Omo\) we have that \(\ZF = \emptyset\). By assuming that \(\xo = 0\) we can write \(\vf = f \ve\), where \(\ve:\Omo\to \Cn\) has components \(e_k(\xi) = e^{-2\pi i \xi\cdot x_k}\). It follows that
\[
   m_{\Sig} = \frac{\bprodS{\vf}{\Da \vf}}{\nrmS{\vf}^2} = \frac{f^* \Da f}{|f|^2}\frac{\bprodS{\ve}{\Da\ve}}{\nrmS{\ve}^2} = \mf \frac{\bprodS{\ve}{\Da\ve}}{\nrmS{\ve}^2}.
\]
It is illustrative to choose some specific values for \(\vSig\). On one hand, if \(\vSig = \ve_1 \ve_1^*\) then
\[
    m_{\Sig} = \frac{f^* \Da f}{|f|^2} = \mf
\]
and we recover the multiplier in~\eqref{eq:perfectExtrapolationSingleFunction}. In this case, extrapolation in frequency would lead to~\eqref{eq:perfectSuperresMisplacedDetails} for each \(u_{x_k,\Omo}\). On the other, when \(\vSig = \vI\) then the trace multiplier becomes
\[
    m_{I}(\xi) = \frac{1}{n}\mf(\xi) \sums_{k=1}^{n-1} e^{2(\alpha -1)\pi i \xi\cdot x_k}.
\]
In this case, the multiplier \(\mf\) is modulated by complex exponentials representing translates by factors \((\alp - 1)x_k\). In particular, it is the {\em average} of the multipliers that achieve exact extrapolation for each \(f_{x_k}\). In this case, it is straightforward to see that
\begin{align*}
    u_{k,\Ome} &\approx u_{k,\Omo} + \frac{\alp^d}{n} \tau_{(\alp - 1)x_k/\alp }\Da (\etaf \ast u_{k,\Omo}) + \frac{\alp^d}{n} \sums_{j\neq k}\tau_{(\alp - 1)x_j/\alp }\Da (\etaf\ast u_{k,\Omo}).
\end{align*}
While the second term in the right-hand side corresponds to the correct details scaled by a factor \(1/n\), the third corresponds to an artifact: the correct details scaled by a factor \(1/n\) are replicated at each \((\alp - 1) x_j / \alp\).

In general, a \(\Sig\)-multiplier will take the form
\[
    m_{\Sig}(\xi) =  \mf(\xi)\frac{\sums_{k=1}^r P_k(\xi)^* P_k(\alpha\xi)}{\sums_{k=1}^r |P_k(\xi)|^2}
\]
where \(r\) is the rank of \(\vSig\) and \(P_1,\ldots, P_k\) are functions of the form
\[
    P_k(\xi) = \sums_{j=0}^{n-1} a_{k,j} e^{-2\pi i \xi\cdot x_j}.
\]
When finding the worst-case or average case optimal multipliers, an implicit selection of these functions is made. Whether there is a strategy to mitigates these effects is an open question.

%% ============================================================================
\subsection{Multipliers and discrete approximations}
\label{sec:adaptiveMultipliersDiscreteApproximation}
%% ============================================================================

In practical applications, such as image processing, the collection in \(\mF\) is induced by interpolating discrete data. Let
\[
    Q_N := \set{q\in \Zd:\,\, q_i \in \set{0,\ldots, N-1}}
\]
and suppose that the discrete data is given by functions
\[
    a_1,\ldots, a_n : Q_N \to \C.
\]
We assume that the discrete data is interpolated by the formula
\begin{equation}\label{eq:discreteData:interpolationInSpace}
    u_k(x) = \sums_{q\in Q_N} a_k(q) \vphi\left(\frac{x - \Delta x q}{\Delta x}\right)
\end{equation}
for some function \(\vphi\in\LERd\). In this case,
\begin{equation}\label{eq:discreteData:interpolationInFrequency}
    f_k(\xi) = \wh{\vphi}(\xi)\sums_{q\in Q_N} a_k(q) e^{-2\pi i \Delta x q \cdot \xi}
\end{equation}
If we let \(\ve:\Omo\to \C^p\) with \(p = N^d\) be the function with components \(e_k(\xi) = e^{-2\pi i \xi\cdot k}\) then there exists a \(n\times p\) matrix \(\vA\) such that
\[
    \vf = \vA D_{\Delta x}\ve.
\]
In this case,
\[
    m_{\Sig} = \frac{D_{\Delta x} \wh{\vphi} D_{\alpha\Delta x}\wh{\vphi}}{|D_{\Delta x} \wh{\vphi}|^2} \frac{\bprodlEn{\vf}{\vSig\vf}}{\bprodlEn{\vf}{\vSig\vf}} = \frac{D_{\Delta x} \wh{\vphi} D_{\alpha\Delta x}\wh{\vphi}}{|D_{\Delta x} \wh{\vphi}|^2} \frac{\bprodlEp{D_{\Delta x}\ve}{\vA\adj\vSig\vA D_{\alpha\Delta x}\ve}}{\bprodlEp{D_{\Delta x}\ve}{\vA\adj\vSig\vA D_{\Delta x}\ve}}.
\]
The \(\Sig\)-multiplier is the product between the multiplier in~\eqref{eq:perfectExtrapolationSingleFunction} for \(D_{\Delta x} \vphi\) and a term that depends only on a family of complex exponentials. While \(\vSig\) is \(n\times n\) the matrix \(\vA\adj\vSig \vA\) is \(p\times p\). Suppose that \(\vA\) is full-rank. When \(n\geq p\) the map \(\vSig \mapsto \vA\adj\vSig\vA\) is surjective. This follows from the fact that \(\vA\) admits a \(p\times n\) pseudo-inverse \(\vA^{+}\) from the left. Therefore, for any \(p\times p\) matrix \(\vSig'\) we may define \(\vSig = \vA^+{\adj} \vSig' \vA^{+}\) to obtain
\[
    \vA\adj\vSig\vA = \vSig'.
\]
In other words, in the regime \(n\leq p\) the samples do not constrain the choice of multiplier. In fact, it suffices to use as multiplier
\[
    m_{\Sig'} = \frac{D_{\Delta x} \wh{\vphi} D_{\alpha\Delta x}\wh{\vphi}}{|D_{\Delta x} \wh{\vphi}|^2} \frac{\bprodlEp{D_{\Delta x}\ve}{\vSig' D_{\alpha\Delta x}\ve}}{\bprodlEp{D_{\Delta x}\ve}{\vSig' D_{\Delta x}\ve}}.
\]
In contrast, when \(n > p\) then the above argument does not work. In fact, imposing \(\vA\adj \vSig \vA = \vSig'\) constrains the nullspace of \(\vSig'\) to contain that of \(\vA\) or, equivalently, the range of \(\vSig'\) must be contained in the range of \(\vA\adj\). Therefore, only when the number of elements in the collection exceeds the number of discretization points the collection enforces additional constraints on the optimal multiplier.

%% ============================================================================
\section{Connections to multiresolution}
\label{sec:connectionsToMultiresolution}
%% ============================================================================

%% ============================================================================
\subsection{Multiresolutions}
%% ============================================================================

Although multiresolution analysis (MRA) was introduced by Mallat~\cite{mallat_multiresolution_1989} and Meyer~\cite{meyer_ondelettes_1997} here we will establish a connection between our results and the more general concept of {\em multiresolution} as introduced in~\cite{de_boor_construction_1993}. Before proceeding, let \(\Delta > 0\). We say that a subspace \(S\subset \LERd\) is {\em \(\Delta\)-shift-invariant} if
\[
    x_0\in\Delta \Zd,\, u\in S:\,\, \tau_{x_0} u \in S.
\]
\begin{definition}[Multiresolution]
    Let \(\set{S_j}_{j\in\Z}\) be a collection of closed subspaces of \(\LERd\) such that for every \(j\in \Z\) the subspace \(S_j\) is \(2^{-j}\)-shift-invariant. We say that it is a {\em multiresolution} if
    \begin{enumerate}[leftmargin=42pt, label=(MR.\arabic*), parsep=\parskip]
        \item{\(\forall\, j\in\Z:\,\, S_{j} \subset S_{j+1}\).
        }
        \item{\(\cl(\cup_{j\in\Z} S_j) = \LERd\)
        }
        \item{\(\cap_{j\in\Z} S_j = \set{0}\).
        }
    \end{enumerate}
\end{definition}
The classical approach to construct a multiresolution is to assume that the space \(S_0\) is generated by a function \(\phi\in \LERd\), that is,
\[
    S_0 := \cl({\set{\tau_{k}\phi:\,\, k\in\Zd}})
\]
and that for \(j\neq 0\) the space \(S_j\) is generated by the translates of \(D_{2^{j}}\phi\), that is,
\begin{equation}\label{eq:mr:dilates}
    S_j := \cl{\set{\tau_{2^{-j} k}D_{2^{j}}\phi:\,\, k\in \Zd}}.
\end{equation}
Since it is apparent that
\[
    S_j := \cl({\set{D_{2^j}\tau_{k}\phi:\,\, k\in \Zd}})
\]
the sequence \(\set{S_j}_{j\in\Z}\) satisfies (MR.1) if
\begin{equation}\label{eq:mr:spacesInclusion}
    S_0 \subset S_1.
\end{equation}
From this construction we obtain a multiresolution under mild conditions on \(\phi\). The following proposition follows from Theorem~4.5 and Corollary 4.14 in~\cite{de_boor_construction_1993}.
\begin{proposition}
    Let \(\phi\in \LERd\), let \(\whphi\) denote its Fourier transform, and let \(\set{S_j}_{j\in\Z}\) be the sequence of subspaces of \(\LERd\) defined in~\eqref{eq:mr:dilates}. If \(\whphi\) is non-zero almost everywhere in a neighborhood of the origin then the sequence \(\set{S_j}_{j\in\Rd}\) satisfies conditions (MR.2) and (MR.3).
\end{proposition}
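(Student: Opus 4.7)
The plan is to invoke Theorem~4.5 and Corollary~4.14 of~\cite{de_boor_construction_1993}, which provide Fourier-side criteria for the density condition (MR.2) and the triviality-of-intersection condition (MR.3) for sequences of dilated principal \(\Zd\)-shift-invariant spaces. The first step is to verify that~\eqref{eq:mr:dilates} identifies each \(S_j\) with the \(2^{-j}\)-dilate of the principal \(\Zd\)-shift-invariant space generated by \(\phi\), placing us in the setting of those theorems. The remaining work is to show that the hypothesis ``\(\whphi\) is non-zero a.e. on a neighborhood of the origin'' implies the nondegeneracy conditions on the bracket function \([\whphi,\whphi](\eta) := \sum_{k\in\Zd}|\whphi(\eta + k)|^2\) required there; this follows from \([\whphi,\whphi](\eta) \geq |\whphi(\eta)|^2\) pointwise, so that \([\whphi,\whphi]\) is strictly positive on any neighborhood of \(0\) on which \(\whphi\) does not vanish.

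A self-contained sketch of (MR.2) would proceed as follows. Use the Fourier characterization that \(v \in S_j\) if and only if \(\whv(\xi) = 2^{-dj}\whphi(2^{-j}\xi)P(2^{-j}\xi)\) for some \(\Zd\)-periodic function \(P\). By density of band-limited functions in \(\LERd\), it suffices to approximate any \(u\) with \(\supp\whu\) contained in a ball of radius \(R\) about the origin. For \(j\) sufficiently large, the set \(2^{-j}\supp\whu\) lies inside the fundamental cube \([-1/2, 1/2)^d\) of \(\Zd\) and inside the neighborhood on which \(\whphi\neq 0\) a.e.; then the definition \(P_j(\eta) := 2^{dj}\whu(2^j\eta)/\whphi(\eta)\) (extended by zero to \([-1/2, 1/2)^d\) and then \(\Zd\)-periodically) yields an element \(v_j \in S_j\) that agrees with \(u\) in frequency on \(\supp\whu\). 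The aliasing error on translates by \(2^j k\), \(k\in\Zd\setminus\set{0}\), is bounded in \(\LE\)-norm by
\[
\sum_{k\in\Zd\setminus\set{0}}\int_{\supp\whu}|\whu(\eta)|^2\frac{|\whphi(2^{-j}\eta + k)|^2}{|\whphi(2^{-j}\eta)|^2}\,d\eta,
\]
which tends to zero as \(j\to\infty\) using \(\whphi\in\LERd\) and the fact that \(|\whphi(2^{-j}\eta)|\) is bounded below on \(\supp\whu\) for \(j\) large.

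For (MR.3), suppose \(u \in \bigcap_{j\in\Z} S_j\). Writing \(\whu(\xi) = 2^{-dj}\whphi(2^{-j}\xi)P_j(2^{-j}\xi)\) for each \(j\), applying Plancherel and the bracket identity yields
\[
\nrm{u}^2 = 2^{-dj}\int_{[-1/2,1/2)^d}|P_j(\eta)|^2[\whphi,\whphi](\eta)\,d\eta.
\]
Hence \(\int_V|P_j(\eta)|^2\,d\eta \lesssim 2^{dj}\nrm{u}^2\) on any neighborhood \(V\) of \(0\) on which \([\whphi,\whphi]\) is bounded below. As \(j\to -\infty\) this forces \(P_j\to 0\) in \(\LE(V)\); pulling this back via the representation of \(\whu\) and iterating across dyadic scales shows that \(\whu\) must vanish a.e. on arbitrarily large sets, so \(u = 0\).

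The main technical obstacle is the aliasing control in (MR.2), which requires exploiting not only the nonvanishing of \(\whphi\) near \(0\) but also its \(\LE\)-decay across the lattice \(\Zd\); and in (MR.3), the careful scaling argument that converts smallness of \(P_j\) on a fixed neighborhood into vanishing of \(\whu\) globally. Both technicalities are packaged in the proofs of Theorem~4.5 and Corollary~4.14 of~\cite{de_boor_construction_1993}, which is why invoking those results is the most efficient route.
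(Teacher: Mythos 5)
Your proposal takes essentially the same route as the paper, which offers no proof of its own beyond citing Theorem~4.5 and Corollary~4.14 of~\cite{de_boor_construction_1993}; invoking those results after checking that the hypothesis on \(\whphi\) supplies the required nondegeneracy is exactly what is intended. One caveat on your supplementary sketch: ``\(\whphi\) nonzero a.e.\ near the origin'' does not give a pointwise lower bound on \(|\whphi(2^{-j}\eta)|\) or on \([\whphi,\whphi]\) near \(0\), so the aliasing estimate in (MR.2) and the \(2^{dj}\)-scaling step in (MR.3) as written need the support-based (rather than lower-bound-based) formulations used in the cited proofs --- but since you explicitly defer those technicalities to \cite{de_boor_construction_1993}, the overall argument stands.
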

The inclusion~\eqref{eq:mr:spacesInclusion} is satisfied if there exists a 1-periodic function \(\whh\in \LInfRd\) such that
\begin{equation}\label{eq:mr:refinementEquation}
    D_2 \whphi = \whh \whphi
\end{equation}
where \(\whphi\) denotes the Fourier transform of \(\phi\). An equation of the form~\eqref{eq:mr:refinementEquation} is called a {\em refinement equation}~\cite{de_boor_construction_1993} and a function \(\phi\) that satisfies~\eqref{eq:mr:refinementEquation} is called a {\em refinable function}. The function \(\whh\) is called a {\em refinement mask}~\cite{de_boor_construction_1993} or {\em two-scale symbol}~\cite{christensen_introduction_2016}. It is assumed to be \(1\)-periodic. In~\cite{daubechies_orthonormal_1988} it is shown that for a given \(\whh\) the solution to~\eqref{eq:mr:refinementEquation} is formally given by the {\em cascade algorithm}~\cite{daubechies_ten_1992}
\[
    \whphi := \prods_{j=1}^{\infty} D_{2^{-j}} \whh
\]
where the product converges if \(\whh(0) = 1\) and \(|\whh| > 0\) near the origin. In this case \(\whphi(0) = 1\) and, under suitable regularity conditions for \(\whh\), \(|\whphi| > 0\) near the origin. Therefore, given a sufficiently regular refinement mask \(\whh\), we can construct a function \(\phi\) that induces a multiresolution. Our goal is to show when \(\Sigma\)-multipliers can be used for this purpose.

%% ============================================================================
\subsection{\(\Sigma\)-multipliers and multiresolution}
%% ============================================================================

We now present conditions under which a \(\Sigma\)-multiplier can be used to construct a multiresolution. From our previous discussion, it suffices to show that a \(\Sigma\)-multiplier can be used to construct a refinable function \(\whphi\in\LERd\) that is non-zero almost everywhere near the origin. From now on, let \(\Omo = [-1/2, 1/2]^d\) and let \(\alpha = 2\). To control the behavior of the multiplier on the boundary, we introduce the weight
\[
    w_N(\xi) = \prod_{k=1}^d \left(\frac{1 + e^{2\pi i \xi_k}}{2}\right)^N
\]
for \(N\in \No\)~\cite[cf. Proposition~3.3]{daubechies_orthonormal_1988}. Then, we let \(\whh_{\Sigma,N}\) be the \(1\)-periodic extension of \(w_N m_{\Sigma}\). Under very general conditions, this refinement mask yields a refinable function that induces a multiresolution.

\begin{theorem}\label{thm:sigmaMultiplierDefiniteInducesMRA}
    If \(\ZF = \emptyset\) and \(\vSig\in\Hnn\) is positive definite, then \(w_N m_{\Sigma}\) produces a refinable function that induces a multiresolution.
\end{theorem}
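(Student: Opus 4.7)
The plan is to verify the hypotheses required to apply the cascade algorithm to \(\whh_{\Sigma,N}\), producing a refinable function \(\whphi\in \LERd\) that is non-zero almost everywhere on a neighborhood of the origin, and then to invoke the proposition stated just before the theorem to conclude that the induced sequence \(\set{S_j}_{j\in\Z}\) from~\eqref{eq:mr:dilates} is a multiresolution.

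First, I would pin down the behavior of the mask at the origin. Since \(\alpha = 2\) we have \(\Da\vf(0) = \vf(0)\), and because \(\ZF = \emptyset\) the vector \(\vf(0)\in\Cn\) is non-zero; together with positive definiteness of \(\vSig\) this yields \(m_{\Sigma}(0) = \bprodS{\vf(0)}{\vf(0)}/\nrmS{\vf(0)}^2 = 1\). Since \(w_N(0) = 1\) I obtain \(\whh_{\Sigma,N}(0) = 1\). By Proposition~\ref{prop:sigmaMultiplierBoundedForPSDMatrix}, \(m_{\Sigma}\) is real-analytic on \(\Omo\), hence continuous and bounded by some \(M > 0\). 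The weight \(w_N\) is smooth and vanishes to order \(N\) on each face \(\xi_k = \pm 1/2\) of \(\Omo\), so the \(1\)-periodic extension \(\whh_{\Sigma,N}\) lies in \(C^{N-1}(\Rd)\); in particular there is a neighborhood \(U\) of the origin on which \(|\whh_{\Sigma,N}| > 0\), and a first-order Taylor expansion furnishes the Lipschitz-type bound \(|\whh_{\Sigma,N}(\xi) - 1| \le C_0|\xi|\) on \(U\).

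Next, I would form the cascade product and split it into a B-spline factor and a multiplier factor:
\[
    \whphi(\xi) := \prods_{j=1}^{\infty} D_{2^{-j}}\whh_{\Sigma,N}(\xi) = \left(\prods_{j=1}^{\infty} D_{2^{-j}} w_N(\xi)\right) \left(\prods_{j=1}^{\infty} D_{2^{-j}} (m_{\Sigma})_{\mathrm{per}}(\xi)\right).
\]
The Lipschitz-type bound guarantees absolute convergence of \(\sums_{j} |D_{2^{-j}}\whh_{\Sigma,N}(\xi) - 1|\) on bounded sets, hence locally uniform convergence of the cascade product. The first factor equals, up to a unimodular phase, the Fourier transform of the \(d\)-fold tensor product of \(N\)-th order cardinal B-splines, so it belongs to \(\LERd\) and decays like \(|\xi|^{-N}\) at infinity. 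For the second factor, I would split the indices \(j\) into those with \(\xi/2^j \in U\) (where each factor is \(1 + O(|\xi|/2^j)\) with summable total) and the remaining \(O(\log(1+|\xi|))\) indices (each contributing at most \(M\)), obtaining \(\prods_{j=1}^{\infty}|D_{2^{-j}}(m_\Sigma)_{\mathrm{per}}(\xi)| \le C(1+|\xi|)^{\log_2 M}\). Choosing \(N > d/2 + \log_2 M\) then forces \(\whphi \in \LERd\).

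Finally, local non-vanishing of \(\whh_{\Sigma,N}\) combined with locally uniform convergence of the cascade product shows that \(\whphi\) is non-zero almost everywhere on a neighborhood of the origin, while by construction \(\whphi\) satisfies the refinement equation \(D_2\whphi = \whh_{\Sigma,N}\whphi\). The proposition stated just above the theorem (itself a consequence of Theorem~4.5 and Corollary~4.14 in~\cite{de_boor_construction_1993}) then yields (MR.2) and (MR.3) for the sequence \(\set{S_j}_{j\in\Z}\) defined in~\eqref{eq:mr:dilates}, and the refinement equation supplies (MR.1) through the inclusion~\eqref{eq:mr:spacesInclusion}. The main obstacle is the quantitative control of \(\prods_{j}|D_{2^{-j}}(m_\Sigma)_{\mathrm{per}}(\xi)|\); the dyadic split above resolves it provided \(N\) is chosen large enough relative to \(\log_2 M\), and a sharper estimate leveraging the full real-analyticity of \(m_\Sigma\) at the origin would be needed if one wanted the conclusion to hold for every \(N \geq 1\).
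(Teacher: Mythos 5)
Your proposal is correct and follows essentially the same route as the paper: establish that \(m_{\Sigma}(0)=1\) and that \(m_{\Sigma}\) is real-analytic (hence bounded and Lipschitz near the origin) via Proposition~\ref{prop:sigmaMultiplierBoundedForPSDMatrix}, show the cascade product converges locally uniformly, and use the \(w_N\) factor to beat the at-most-polynomial growth of the \(m_{\Sigma}\)-part of the product so that \(\whphi\in\LERd\), then invoke the de Boor--DeVore--Ron criterion. The only difference is that you inline Daubechies-style proofs of the two auxiliary facts (cascade convergence and the decay estimate via the dyadic split) that the paper delegates to Lemmas~\ref{lem:boundedMultiplierConvergentCascade} and~\ref{lem:windowedMultiplierDecayingLimitCascade}, and like the paper your argument requires \(N\) sufficiently large.
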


When the matrix \(\vSig\) is semidefinite the multiplier may have singularities even when \(m_{\Sig}\in \LEOmo\). Therefore, the above approach cannot be applied directly. However, we can extend our arguments in a straightforward manner for \(d = 1\).

\begin{theorem}\label{thm:sigmaMultiplierSemidefinite1DInducesMRA}
    Let \(d = 1\). Suppose that \(\ZF = \emptyset\) and that \(\vSig\in\Hnn\) is positive semidefinite. If \(m_{\Sig} \in \LEOmo\) then there exists \(p\in\No\) such that \(2^p w_N m_{\Sig}\) produces a refinable function that induces a multiresolution.
\end{theorem}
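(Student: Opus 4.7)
The plan is to adapt the proof of Theorem~\ref{thm:sigmaMultiplierDefiniteInducesMRA} to the positive semidefinite case by exploiting the one-dimensional geometry. The main novelty is to use the hypothesis \(m_{\Sig}\in\LEOmo\) to conclude that the apparent singularities of \(m_{\Sig}\) inside \(\Omo\) are removable, reducing the analysis essentially to the bounded case up to a scalar normalization at the origin. To this end, set \(\vc{g} := \sqrt{\vSig}\vf\), a vector-valued real-analytic function on \(\R\) since each \(f_i\) is the restriction of an entire function of exponential type. With this notation \(\nrmS{\vf}^2 = \nrmlEn{\vc{g}}^2\) and \(\bprodS{\vf}{\Da\vf} = \bprodlEn{\vc{g}(\cdot)}{\vc{g}(2\cdot)}\). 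Because \(d = 1\) and \(\Omo = [-1/2, 1/2]\) is compact, the zero set \(Z := \set{\xi\in\Omo:\,\nrmlEn{\vc{g}(\xi)} = 0}\) is finite, being the zero set of the non-negative real-analytic (and not identically zero) function \(\nrmlEn{\vc{g}(\cdot)}^2\) on a compact interval.

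The key step is the local analysis of \(m_{\Sig}\) near each \(\xi_l\in Z\). Let \(k_l\geq 1\) be the order of vanishing of \(\vc{g}\) at \(\xi_l\) and let \(k_l'\geq 0\) be the order of vanishing of \(\vc{g}\) at \(2\xi_l\). A Taylor expansion yields
\[
    \nrmlEn{\vc{g}(\xi)}^2 \sim C_l(\xi-\xi_l)^{2k_l} \quad\mbox{and}\quad \iprodlEn{\vc{g}(\xi)}{\vc{g}(2\xi)} \sim C_l'(\xi-\xi_l)^{k_l+k_l'}
\]
near \(\xi_l\), with nonzero constants \(C_l, C_l'\). Hence \(m_{\Sig}(\xi) \sim C_l''(\xi - \xi_l)^{k_l' - k_l}\), which is square-integrable near \(\xi_l\) only if \(k_l' \geq k_l\). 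The hypothesis \(m_{\Sig}\in\LEOmo\) therefore forces the numerator to vanish at least as fast as the denominator at every interior zero, so \(m_{\Sig}\) extends to a continuous, bounded function on \(\Omo\).

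To finish, I would normalize at the origin. If \(\vc{g}\) vanishes at \(0\) with order \(k_0\geq 0\), the same Taylor expansion with \(\xi_l = 0\) (so that \(k_l = k_l' = k_0\)) gives \(m_{\Sig}(0) = 2^{k_0}\). I would then pick \(p\in\No\) so that the scaled multiplier \(\whh_{\Sig,N} := 2^p w_N m_{\Sig}\) becomes a bounded, \(1\)-periodic refinement mask with the normalization required by the cascade algorithm (namely \(\whh_{\Sig,N}(0) = 1\) and \(\absE{\whh_{\Sig,N}} > 0\) near the origin), exactly as in the proof of Theorem~\ref{thm:sigmaMultiplierDefiniteInducesMRA}. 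The weight \(w_N\) controls the behavior at the boundary \(\xi = \pm 1/2\), and the cascade algorithm applied to \(\whh_{\Sig,N}\) yields a refinable function \(\whphi\in\LER\) nonvanishing almost everywhere near the origin. The quoted proposition from~\cite{de_boor_construction_1993} combined with the refinement equation~\eqref{eq:mr:refinementEquation} then delivers the three multiresolution conditions.

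The main obstacle is the singularity analysis: the hypothesis \(m_{\Sig}\in\LEOmo\) must be leveraged to force the numerator and denominator of \(m_{\Sig}\) to share their vanishing orders at each interior zero in \(\Omo\). The one-dimensional hypothesis is essential here, since in higher dimensions the zero set \(Z\) need not be discrete and the analogous reduction to the bounded case is considerably more delicate.
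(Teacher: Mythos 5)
Your proposal follows the same route as the paper's proof: in $d=1$ the zero set of the real-analytic denominator $\nrmS{\vf}^2$ is finite, the hypothesis $m_{\Sig}\in\LEOmo$ is used to force the numerator to vanish at each interior zero at least to the order of the denominator (so the singularities are removable and $m_{\Sig}$ is bounded), the Taylor expansion of $\vv=\vSig^{1/2}\vf$ at the origin gives $m_{\Sig}(0)=2^{k_0}$, and after rescaling one invokes the machinery of Theorem~\ref{thm:sigmaMultiplierDefiniteInducesMRA}.

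One step is not justified as written: at an interior zero $\xi_l$ you assert $\iprodlEn{\vc{g}(\xi)}{\vc{g}(2\xi)}\sim C_l'(\xi-\xi_l)^{k_l+k_l'}$ with $C_l'\neq 0$. Writing $\vc{g}(\xi)=(\xi-\xi_l)^{k_l}\vc{h}(\xi)$ and $\vc{g}(2\xi)=2^{k_l'}(\xi-\xi_l)^{k_l'}\vc{h}'(2\xi)$, the leading coefficient is proportional to $\iprodlEn{\vc{h}(\xi_l)}{\vc{h}'(2\xi_l)}$, and there is no reason these two nonzero vectors cannot be orthogonal; the numerator may vanish to order strictly greater than $k_l+k_l'$, so your displayed asymptotic for $m_{\Sig}$ and the deduction ``square-integrable only if $k_l'\geq k_l$'' are not literally valid. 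This is harmless for the conclusion but needs the fix the paper uses: let $q$ denote the actual order of vanishing of the numerator and $p=2k_l$ that of the denominator, so $|m_{\Sig}|^2\asymp|\xi-\xi_l|^{2(q-p)}$ near $\xi_l$ and local integrability forces $2(q-p)>-1$, hence $q\geq p$ since both are integers; removability follows without ever relating $q$ to $k_l'$. (At the origin your argument is fine, because there the leading coefficient is $\nrmlEn{\vc{h}(0)}^2>0$.) Two further small points: you should also record that the extended $m_{\Sig}$ is Lipschitz (indeed real-analytic, as a quotient of real-analytic functions with nonvanishing denominator after cancellation) near the origin, since that regularity is what the cascade-convergence lemma requires; and the normalization you describe requires multiplying by $2^{-k_0}$, not $2^{k_0}$ --- this sign discrepancy is already present between the theorem statement and the paper's own proof, so it is not held against you.
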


To our knowledge, when \(d > 1\) there is no straightforward extensions of these results. The main obstruction is that \(m_{\Sig} \in \LEOmo\) no longer implies that \(m_{\Sig}\) is bounded. In fact, the following result shows that the condition \(m_{\Sig} \in \LEOmo\) provides a very weak control of the order of the zeros of the numerator and denominator in \(m_{\Sig}\). Recall that if \(f : \Rd \to \C\) is real-analytic, then \(\xio\) is a zero of order \(p\) if \(p\) is the largest integer such that
\[
    |\gamma| < p:\,\, \partial^{\gamma} f(\xio) = 0.
\]
Note that in this case, there is at least one derivative of \(f\) of order \(p\) that does not vanish at \(\xio\).
\begin{lemma}\label{lem:sigmaMultiplierSingularityBehavior}
    Let \(m\) be a \(\Sigma\)-multiplier and suppose that \(m\in\LEOmor\). If \(\xio\in \Omo\) is a zero of \(\nrmS{\vf}^2\) of order \(q\) then it is a zero of \(\bprodS{\vf}{\Da\vf}\) of degree strictly greater than \(q - d/2\).
\end{lemma}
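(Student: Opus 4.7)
The plan is to argue by contradiction, using the real-analyticity of the numerator and denominator together with an elementary directional estimate. Write \(\phi := \bprodS{\vf}{\Da\vf}\) and \(\psi := \nrmS{\vf}^2\); both are real-analytic on \(\Omo\), and \(m = \phi/\psi\) on the open set where \(\psi\) does not vanish. By hypothesis \(\psi\) has order \(q\) at \(\xio\); let \(p\) denote the order of \(\phi\) at \(\xio\) (if \(\phi\) vanishes identically on some neighborhood of \(\xio\) the conclusion holds vacuously, so assume \(p<\infty\)). The aim is to deduce \(p > q - d/2\) from \(m\in\LEOmor\).

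First, I would extract an upper bound on \(\psi\). Since all partial derivatives of \(\psi\) of order \(<q\) vanish at \(\xio\), Taylor's theorem yields constants \(r_0, C_\psi > 0\) with \(B(\xio, r_0)\subset \Omor\) and \(|\psi(\xi)|\leq C_\psi\,|\xi - \xio|^q\) for every \(\xi\in B(\xio, r_0)\). Second, I would produce a directional lower bound on \(|\phi|\). Let \(\phi_p\) denote the leading homogeneous polynomial in the Taylor expansion of \(\phi\) at \(\xio\); by the definition of the order of vanishing, \(\phi_p\not\equiv 0\), so there exists \(\omega_0\in\Sd\) with \(|\phi_p(\omega_0)| > 0\). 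By continuity of \(|\phi_p|\) on \(\Sd\), there is a spherical cap \(U\subset \Sd\) around \(\omega_0\) and a constant \(c_\phi > 0\) with \(|\phi_p(\omega)| \geq c_\phi\) for \(\omega\in U\). The uniform Taylor remainder \(\phi(\xio + r\omega) = r^p \phi_p(\omega) + O(r^{p+1})\) then gives a radius \(r_1 \in (0, r_0]\) such that
\[
    |\phi(\xio + r\omega)| \geq \tfrac{1}{2}\, c_\phi\, r^p \quad \text{for all } \omega\in U,\ 0 < r < r_1.
\]

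Combining the two bounds on the cone \(K := \set{\xio + r\omega : 0 < r < r_1,\ \omega\in U}\) yields a constant \(c > 0\) with \(|m(\xi)|^2 \geq c\,|\xi - \xio|^{2(p-q)}\) on \(K\). Passing to polar coordinates centered at \(\xio\) and integrating the angular variable over \(U\) gives
\[
    \int_{\Omor}|m(\xi)|^2\, d\xi \;\geq\; \int_K |m(\xi)|^2\, d\xi \;\geq\; c\,|U|\int_0^{r_1} r^{2(p-q) + d - 1}\, dr.
\]
If \(p \leq q - d/2\) then \(2(p-q) + d - 1 \leq -1\) and the radial integral diverges, contradicting \(m\in\LEOmor\). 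Therefore \(p > q - d/2\), as claimed.

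The main obstacle one might expect is controlling the interaction between the zero sets of \(\phi\) and \(\psi\) at \(\xio\), which in general requires Lojasiewicz-type inequalities. The plan sidesteps this difficulty: because we need only a lower bound for the integral of \(|m|^2\), it suffices to find a single cone on which \(|\phi|\) has a controlled lower bound, while the trivial Taylor upper bound on \(\psi\) holds pointwise throughout the ball. No global resolution of singularities is required.
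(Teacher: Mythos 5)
Your proof is correct and follows essentially the same route as the paper's: expand the numerator and denominator at \(\xio\) into their leading homogeneous Taylor terms, restrict to a cone based at \(\xio\), pass to polar coordinates, and force \(d-1+2(p-q)>-1\) from the convergence of the radial integral of \(|m|^2\). The only (mild, and arguably simplifying) difference is that you bound the numerator from below on a cone where its leading polynomial is nonvanishing and use the crude Taylor upper bound on the denominator over the whole ball, whereas the paper picks the cone where the denominator's leading polynomial is bounded below and must then argue that the angular integral of the ratio of leading terms is strictly positive; your variant also treats \(d=1\) uniformly rather than as a separate case.
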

Therefore, the multiplier may still have singularities. Although finding additional conditions under which the multiplier is bounded is beyond the scope of our work, we provide some insight into the connection between this problem and that of the {\em \(q\)-capacity} of a set. The connection is as follows. Let \(Z\) be the set of zeros of \(\nrmS{\vf}^2\) on \(\Omo\). Then \(Z\) is a real-analytic subvariety and \(m_{\Sig}\) is real-analytic on \(\Omo\setminus Z\). If \(m_{\Sig}\) admits a real-analytic extension from \(\Omo\setminus Z\) to all of \(\Omo\) then \(Z\) is {\em removable} and \(m_{\Sig}\) is bounded on \(\Omo\). Of course, whether \(Z\) is removable or not depends on some delicate properties of this set.

Let \(K\subset \Rd\) be a compact set. Its \(q\)-capacity is
\[
    C_q(K) := \inf\Lset{\int \nrmlEd{\nabla \phi(\xi)}^q\, d\xi:\,\, \phi\in C^{\infty}_0(\Rd),\,\, \phi|_K \geq 1}.
\]
This notion can be extended to a general subset \(S\subset \Rd\) as
\[
    C_q(S) := \sup\set{C_q(K):\,\, K\subset S,\,\, \mbox{\(K\) compact}}.
\]
The \(q\)-capacity determines when a given set \(Z\) is removable for a class of harmonic functions, which are themselves real-analytic~\cite[Theorem~10, Section~2.2]{evans_partial_2010}, belonging to a suitable Sobolev space.

\begin{theorem}[Theorem~2 in~\cite{hedberg_removable_1974}]
    Let \(p \in (1, \infty)\) and let \(\mathrm{HD}^p(\Omo)\) be the space of harmonic functions in the Sobolev space \(W^{1,p}(\Omo)\). Then \(Z \subseteq \Omo\) is a removable set for any function in \(\mathrm{HD}^p(\Omo)\) if and only if \(C_q(Z)=0\) with \(1/p + 1/q = 1\).
\end{theorem}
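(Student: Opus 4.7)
The plan is to prove both directions of the equivalence using nonlinear potential theory, leveraging duality between $W^{1,p}$ and $W^{-1,q}$ and the smooth-cutoff characterization of $C_q$. For the forward implication (if $C_q(Z) = 0$ then $Z$ is removable), the strategy is to approximate arbitrary test functions by ones vanishing near $Z$. Given $u \in \mathrm{HD}^p(\Omo \setminus Z)$, the vanishing of $C_q(Z)$ yields a sequence $\eta_n \in C^\infty_0(\Rd)$ with $0 \le \eta_n \le 1$, $\eta_n \equiv 1$ on a neighborhood of $Z$, and $\int |\nabla \eta_n|^q\, d\xi \to 0$. For any $\psi \in C^\infty_0(\Omo)$, the function $(1-\eta_n)\psi$ is compactly supported in $\Omo \setminus Z$, so harmonicity of $u$ there gives $\int \nabla u \cdot \nabla((1-\eta_n)\psi)\, d\xi = 0$. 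Expanding and rearranging yields
\[
    \int (1-\eta_n)\,\nabla u \cdot \nabla \psi\, d\xi = \int \psi\,\nabla u \cdot \nabla \eta_n\, d\xi.
\]
After passing to a subsequence so $\eta_n \to 0$ pointwise a.e.\ outside $Z$, the left-hand side converges to $\int \nabla u \cdot \nabla \psi\, d\xi$ by dominated convergence, while the right-hand side is bounded via H\"older by $\|\nabla u\|_{L^p(\Omo)}\,\|\psi\|_{L^\infty}\,\|\nabla \eta_n\|_{L^q} \to 0$. Hence $u$ is weakly harmonic on $\Omo$, and by Weyl's lemma extends to a genuine harmonic function on $\Omo$.

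For the reverse implication, I would argue by contraposition. If $C_q(Z) > 0$, select a compact $K \subset Z$ with $C_q(K) > 0$. A Frostman-type construction adapted to the Sobolev capacity produces a non-trivial non-negative measure $\mu$ supported on $K$ whose Bessel (or Newton) potential $U^\mu$ lies in $W^{1,p}(\Omo)$, is harmonic on $\Omo \setminus K$, and has distributional Laplacian on $\Omo$ equal to a non-zero distribution supported on $K$. Thus $U^\mu$ cannot extend to a harmonic function on all of $\Omo$, exhibiting a non-removable function in $\mathrm{HD}^p(\Omo \setminus Z)$.

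The main obstacle is the reverse direction: constructing the potential $U^\mu$ and verifying that it belongs to $W^{1,p}(\Omo)$. This requires a Frostman-type lemma matched to the Sobolev $q$-capacity together with quasi-continuity properties of Sobolev functions, and some care is needed about the precise notion of removability---whether it refers to extension as a harmonic function in $W^{1,p}$ or merely as a smooth harmonic function---since minor variants yield technically different but ultimately equivalent characterizations.
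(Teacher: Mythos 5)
First, note that the paper does not prove this statement at all: it is imported verbatim as Theorem~2 of Hedberg's 1974 paper and used only to contextualize the removability discussion after Lemma~\ref{lem:sigmaMultiplierSingularityBehavior}, so there is no internal proof to compare against. Judged on its own terms, your forward direction (sufficiency of \(C_q(Z)=0\)) is the standard and essentially complete argument: the cutoff functions \(\eta_n\) with \(\|\nabla\eta_n\|_{L^q}\to 0\), the identity \(\int(1-\eta_n)\nabla u\cdot\nabla\psi = \int \psi\,\nabla u\cdot\nabla\eta_n\), H\"older on the right, dominated convergence on the left, and Weyl's lemma. Two small points you should make explicit: (i) you must first check that the a.e.\ extension of \(u\) across \(Z\) (a Lebesgue-null set) actually lies in \(W^{1,p}_{\mathrm{loc}}(\Omo)\) with distributional gradient equal to the pointwise one, which follows from the same cutoff computation applied to \(\int u\,\partial_i((1-\eta_n)\psi)\); and (ii) the a.e.\ convergence \(\eta_n\to 0\) off \(Z\) needs a Sobolev or Poincar\'e step (harmless, since for \(q>d\) points have positive capacity and the statement is vacuous).

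The genuine gap is the converse. The entire technical content of the necessity direction is the assertion you label a ``Frostman-type construction'': that \(C_q(K)>0\) yields a non-trivial positive measure \(\mu\) on \(K\) whose potential lies in \(W^{1,p}\) and whose distributional Laplacian is a non-zero distribution supported on \(K\). This is equivalent to the dual characterization of the \((1,q)\)-capacity --- namely that \(C_q(K)=0\) if and only if the only distribution in \(W^{-1,p}\) supported on \(K\) is zero --- together with Calder\'on--Zygmund estimates to verify that the (localized) Newtonian potential of such a \(\mu\) belongs to \(W^{1,p}(\Omo)\). None of this is routine, and it is precisely where Hedberg's proof does its work; as written, your argument assumes the conclusion of the hard half. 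You also correctly flag, but do not resolve, the ambiguity in the notion of removability (extension as a harmonic function versus extension within \(\mathrm{HD}^p\)); for this class the two coincide because \(Z\) has measure zero, but that should be said. So: forward direction sound, converse incomplete at its central step.
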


%% ============================================================================
\subsection{Refinement masks and \(\Sigma\)-multipliers}
%% ============================================================================

In Section~\ref{sec:adaptiveMultipliersTranslates} we explicitly analyzed \(\Sig\)-multipliers induced by a family of translates. By analyzing the multipliers induced by a family of scalings we can establish a relation to refinable functions. Let \(f = \Dia f_0\). Since \(\alp > 1\) a part of the high-frequency content of \(f_0\) has been removed in \(f\) over \(\Omo\). Following the notation in Section~\ref{sec:adaptiveMultipliersTranslates} the optimal multiplier for \(f\) is
\[
    \frac{\Dia f^* \Da \Dia f}{|\Dia f|^2} = \Dia \mf
\]
where \(\mf\) is as defined in~\eqref{eq:perfectExtrapolationSingleFunction}. This suggests a form of scaling-invariance. Consider the family
\[
    \mF = \set{f,f_1,\ldots, f_{n-1}}\quad\mbox{with}\quad f_{k}(\xi) = f(\alp^{-k}\xi)\quad\mbox{for}\quad k\in\mX_{n-1}.
\]
The multiplier in~\eqref{eq:perfectExtrapolationSingleFunction} associated to \(f_{k+1}\) is
\[
    \frac{f_{k+1}^* \Da f_{k+1}}{|f_{k+1}|^2} = \frac{\Dia f_k^* \Dia \Da f_k}{|\Dia f_k|^2} = \Dia \frac{f_k^*\Da f_k}{|f_k|^2} = \ldots = D_{\alp^{-k}} \mf
\]
and the product of these multipliers resemble iterations of the cascade algorithm as
\[
    \prod_{k=0}^{n-1} \frac{f_k^* \Da f_k}{|f_k|^2} = \prods_{k=0}^{n-1} D_{\alp^{-k}} \mf.
\]
This suggests that when \(f_0\) is a refinable function some information about the Fourier series of the refinement mask is encoded in a \(\Sig\)-multiplier induced by a family of scalings as above. Let \(\alp = 2\) and suppose that \(f = \whphi\). In this case, the trace multiplier becomes
\[
    m_I = \frac{\sums_{k=0}^{n-1} D_{\alp^{-k}} \whphi^* D_{\alp^{-k +1}} \whphi}{\sums_{k=0}^{n-1}|D_{\alp^{-1}}\whphi|^2} = \frac{\sums_{k=0}^{n-1} |D_{\alp^{-k}} \whphi|^2 D_{\alp^{-k}} \whh}{\sums_{k=0}^{n-1}|D_{\alp^{-1}}\whphi|^2} = \sums_{k=0}^{n-1}\left(\frac{|D_{\alp^{-k}} \whphi|^2}{\sums_{j=0}^{n-1} |D_{\alp^{-k}} \whphi|^2}\right) D_{\alp^{-k}} \whh.
\]
Instead of the {\em product} of the terms \(D_{\alp^{-k}} \whh\) this yields a {\em pointwise convex combination} of the terms \(D_{\alp^{-k}} \whh\). In fact, the term \(D_{\alp^{-k}} \whh\) will have the largest coefficients in this combination if the term \(|D_{\alp^{-k}}\whphi|^2\) is the largest. Although this does not equal \(\whh\), it does encode information about its scalings.

Instead of considering a family of scalings, we can use the family
\[
    \mF = \set{f_k:\,\, k\in\Zd:\, |k_i| \leq N}\quad\mbox{for}\quad f_k(\xi) = e^{-2\pi i \xi\cdot k} \whphi(\xi)
\]
for some \(N\in \N\). This is a subset of the basis for \(S_0\) in a multiresolution. It is illustrative to see that the trace multiplier in this case is
\[
    m_I(\xi) = \frac{\sums_{|k|\leq N} e^{-2\pi (\alp - 1) i \xi  \cdot k} |\whphi(\xi)|^2 \whh(\xi)}{\sums_{|k|\leq N} |\whphi(\xi)|^2} = \whh(\xi) \left(\frac{1}{N^d}\sums_{|k|\leq N} e^{-2\pi (\alp -1) i \xi\cdot k}\right) = \frac{1}{N^d} \whh(\xi) D_N(\xi)
\]
where we used the fact that \(\alp = 2\) and where \(D_N\) is the Dirichlet kernel. It is clear that as \(N\to \infty\) the above converges, in a suitable sense, to a Dirac's \(\delta\) at the origin. In general, a \(\Sig\)-multiplier in this case will have the form
\[
    m_{\Sig}(\xi) = \whh(\xi) \frac{\iprodS{\ve}{\Da\ve}}{\iprodS{\ve}{\ve}}
\]
where \(\ve:\Omo\to \Cn\) with \(n=N^d\) is the function with components \(e_k(\xi) = e^{-2\pi i \xi\cdot k}\). It is apparent that there is no matrix \(\vSig\) for which the quotient is identically one, unless it is a matrix with \(\Sig_{0,0} = 1\) as its only non-zero component. In spite of this, we can recover \(|\whh|^2\) from a non-trivial \(\Sig\)-multiplier when \(h\) is compactly supported and non-negative. Let \(\Gamma = \supp(h)\) and, since \(\Gamma\subset \Zd\), let \(\mF\) be the family
\[
    \mF = \set{f_k:\,\, k\in\Gamma }\quad\mbox{for}\quad f_k(\xi) = e^{-2\pi i \xi\cdot k} \whphi(\xi).
\]
Then, let \(\vSig\) be a diagonal matrix with \(\Sig_{k,k} = h(k)\). Since \(k\in\Gamma\) and \(h\) is non-negative, we have that \(\Sig_{k,k} >0\). Then the associated multiplier is
\[
    m_{\Sig} = \frac{\sums_{k\in\Gamma} h(k) e^{-2\pi i \xi\cdot k} |\whphi(\xi)|^2 \whh(\xi)}{\sums_{k\in\Gamma} h(k) |\whphi(\xi)|^2} = \frac{1}{\whh(0)}|\whh(\xi)|^2 = |\whh(\xi)|^2
\]
where we used the fact that \(\whh(0) = 1\). This can be used as a test whether \(f_0\) corresponds to a scaling function. Perhaps more surprisingly, we can recover additional information by repeating this process. If we use
\[
    \mF = \set{f_k:\,\, k\in\Gamma }\quad\mbox{for}\quad f_k(\xi) = e^{2\pi i \xi\cdot k} \whphi(\xi)
\]
then the above multiplier simply becomes
\[
    m_{\Sig} = \frac{\sums_{k\in\Gamma} h(k) e^{2\pi i \xi\cdot k} |\whphi(\xi)|^2 \whh(\xi)}{\sums_{k\in\Gamma} h(k) |\whphi(\xi)|^2} = \frac{1}{\whh(0)}|\whh(\xi)|^2 = \whh(\xi)^2
\]
from where \(\whh\) can be recovered in this case. Finally, by considering the collection
\[
    \mF = \set{f_k:\,\, k\in\Gamma }\quad\mbox{for}\quad f_k(\xi) = e^{4\pi i \xi\cdot k} \whphi(\xi).
\]
instead. Note the translates are scaled by a factor of \(-2\) compared to the previous case. The multiplier then becomes
\[
    m_{\Sig} = \frac{\sums_{k\in\Gamma} h(k) e^{4\pi i \xi\cdot k} |\whphi(\xi)|^2 \whh(\xi)}{\sums_{k\in\Gamma} h(k) |\whphi(\xi)|^2} = \frac{1}{\whh(0)}|\whh(\xi)|^2 = \whh(\xi) \whh(2\xi).
\]
Interestingly, in this case the multiplier achieves extrapolation by two scales, that is,
\[
    v(\xi) m_{\Sig}(\xi) = \sums_{k\in\Gamma} c_k f_k(\xi) m_{\Sig}(\xi) = \sums_{k\in\Gamma} c_k e^{4\pi i \xi\cdot k} \whphi(\xi) \whh(\xi)\whh(2\xi) = \sums_{k\in\Gamma} c_k e^{4\pi i \xi\cdot k} \whphi(4\xi) = D_{\alp^2} v(\xi).
\]
In this case we have assumed knowledge of \(h\). However, whether this choice is optimal for some error function is an open question. Even in this case, this suggests that information about the refinement mask may be recovered from \(\Sig\)-multipliers induced by families of translates.

%% ============================================================================
\section{Numerical experiments}
\label{sec:numericalExperiments}
%% ============================================================================

We now perform numerical experiments to illustrate the practical implications of our theoretical results. The numerical results presented in this section can be reproduced using the code in the GitHub repository:\\ \href{https://github.com/csl-lab/adaptiveExtrapolationInFrequency}{{\tt csl-lab/adaptiveExtrapolationInFrequency}}.

%% ============================================================================
\subsection{Multiresolution}
%% ============================================================================

\begin{figure*}[!htb]
    \centering
    \begin{subfigure}[t]{0.24\textwidth}
        \includegraphics[width=0.99\textwidth]{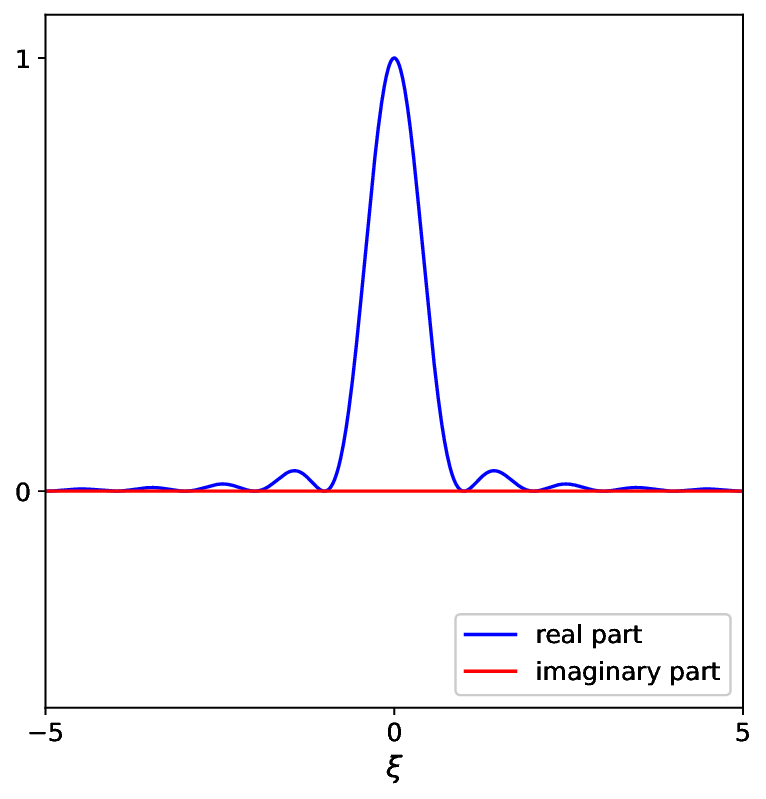}
        \caption{\(f\)}
        \label{fig:mr1d:test:f}
    \end{subfigure}%
    \begin{subfigure}[t]{0.24\textwidth}
        \includegraphics[width=0.99\textwidth]{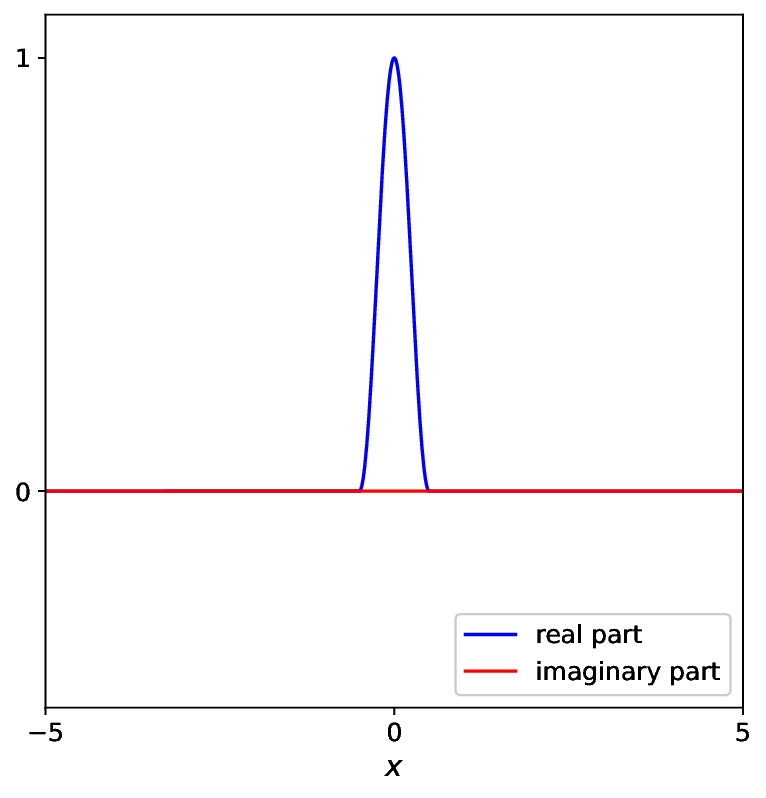}
        \caption{\(\mf\)}
        \label{fig:mr1d:test:m}
    \end{subfigure}%
    \begin{subfigure}[t]{0.24\textwidth}
        \includegraphics[width=0.99\textwidth]{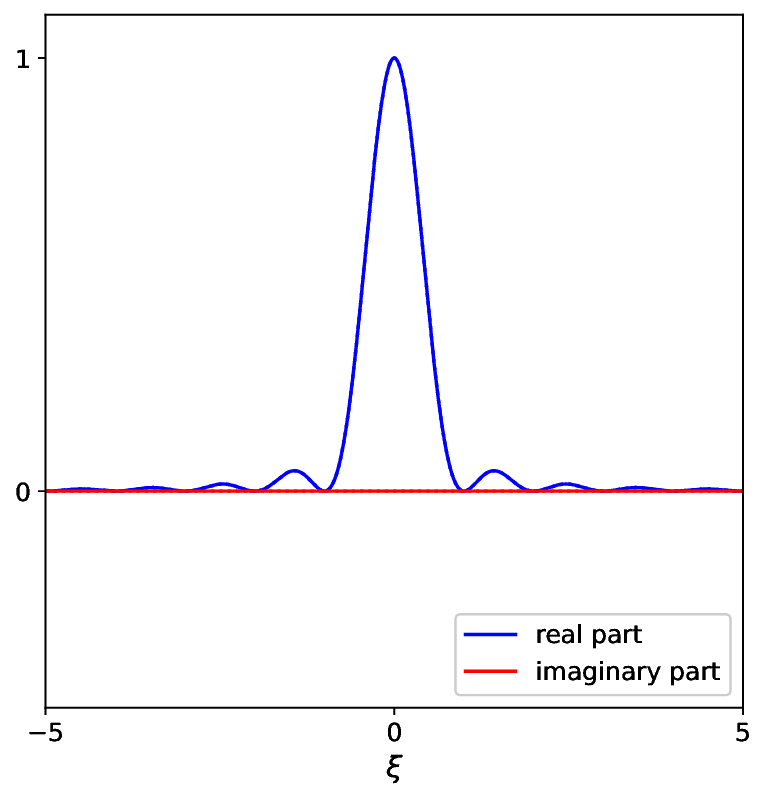}
        \caption{\(\whphi\)}
        \label{fig:mr1d:test:whphi}
    \end{subfigure}%
    \begin{subfigure}[t]{0.24\textwidth}
        \includegraphics[width=0.99\textwidth]{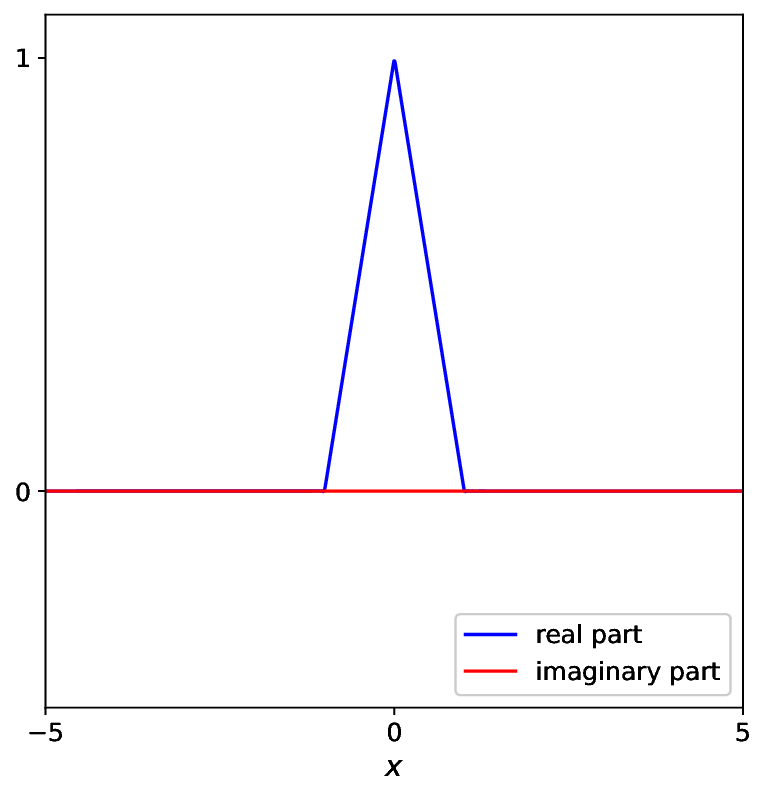}
        \caption{\(\phi\)}
        \label{fig:mr1d:test:phi}
    \end{subfigure}\\
    \begin{subfigure}[t]{0.24\textwidth}
        \includegraphics[width=0.99\textwidth]{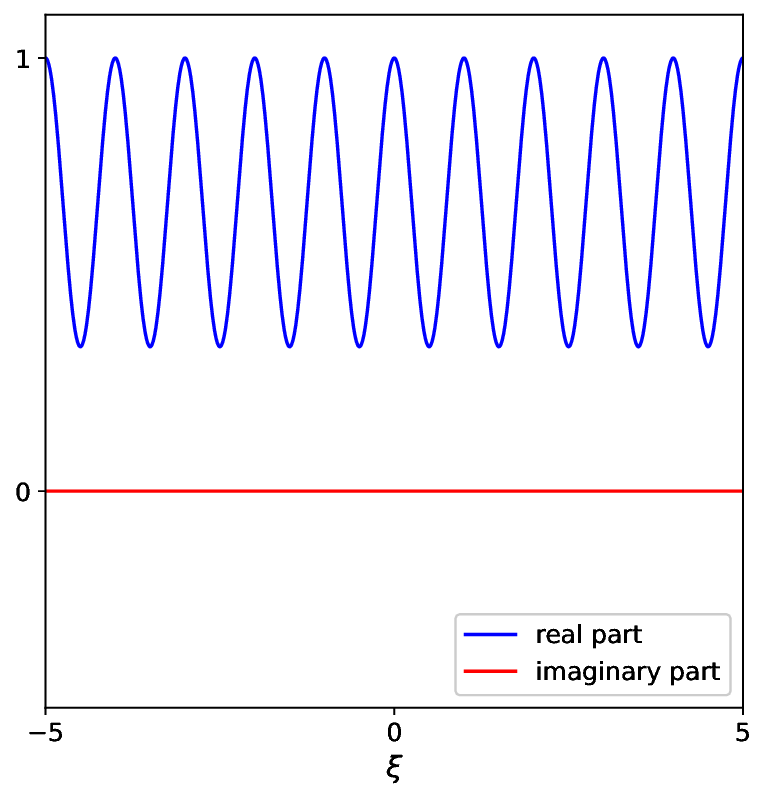}
        \caption{\(\Phi\)}
        \label{fig:mr1d:test:Phi}
    \end{subfigure}%
    \begin{subfigure}[t]{0.24\textwidth}
        \includegraphics[width=0.99\textwidth]{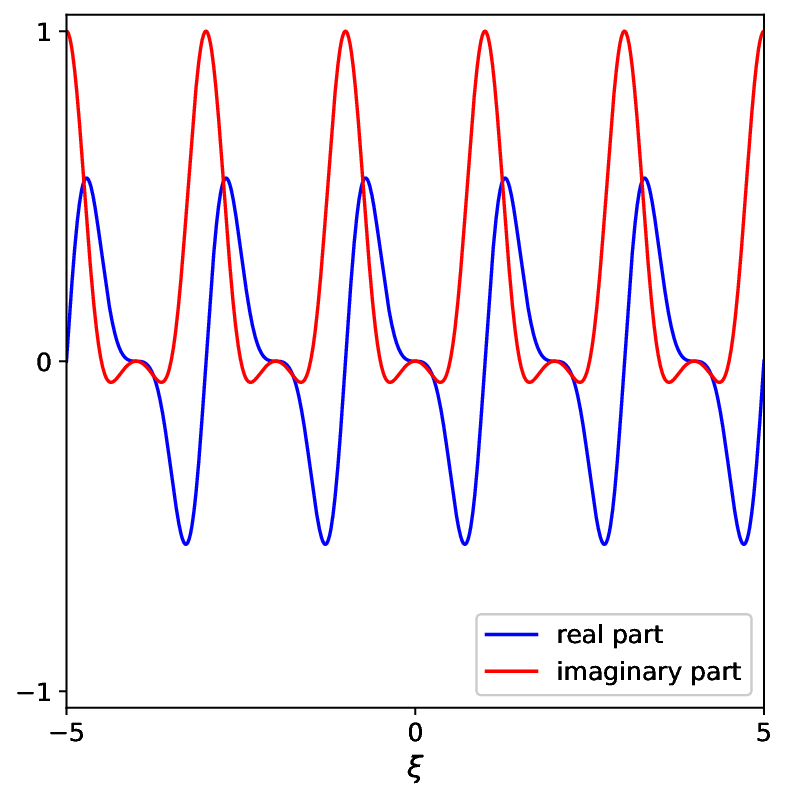}
        \caption{\(g\)}
        \label{fig:mr1d:test:g}
    \end{subfigure}%
    \begin{subfigure}[t]{0.24\textwidth}
        \includegraphics[width=0.99\textwidth]{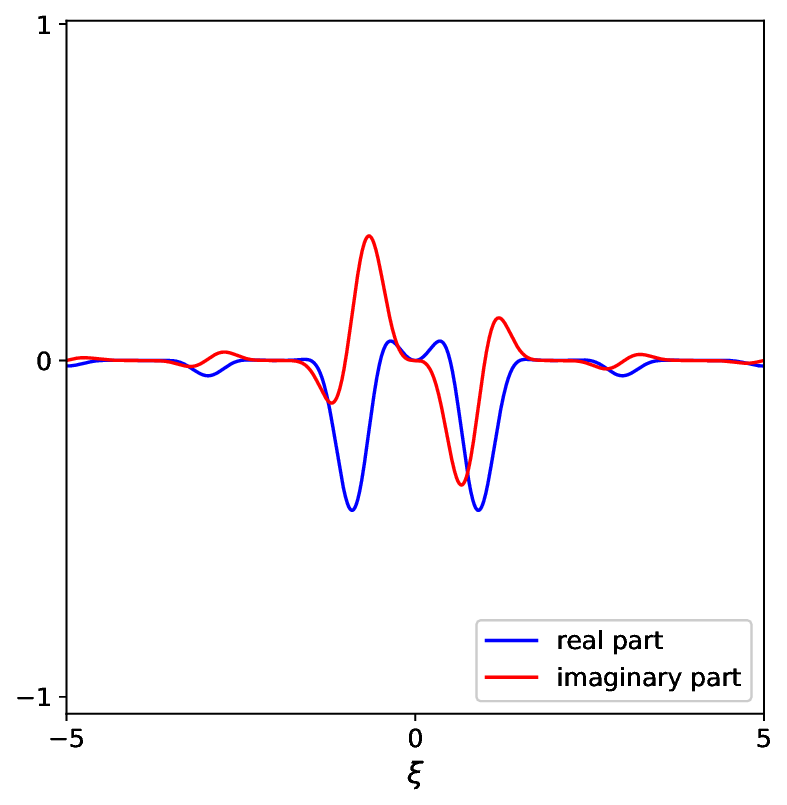}
        \caption{\(\whpsi\)}
        \label{fig:mr1d:test:whpsi}
    \end{subfigure}%
    \begin{subfigure}[t]{0.24\textwidth}
        \includegraphics[width=0.99\textwidth]{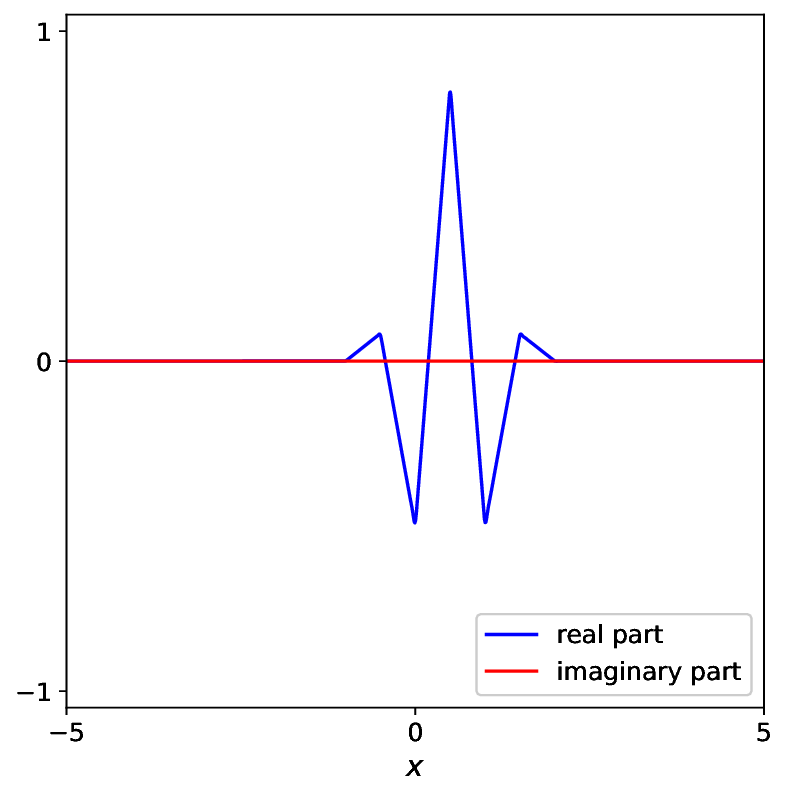}
        \caption{\(\psi\)}
        \label{fig:mr1d:test:psi}
    \end{subfigure}%

    \caption{The multiresolution induced by the collection \(\mF\) with the single element \(f = \sinc^2\) recovers the multiresolution induced by a \(B\)-spline of degree 1.
    }
    \label{fig:mr1d:test}
\end{figure*}

In Section~\ref{sec:connectionsToMultiresolution} we discussed the connections between \(\Sigma\)-multipliers and multiresolutions. In this section we explore this connection further by performing numerical experiments in \(d=1\). In all these experiments we assume that \(\Omo = [-1/2, 1/2]\).

Our first experiment shows how from a collection \(\mF\) consisting of a single element \(f\) we can recover a multiresolution. Since in this case \(n = 1\) there is only one \(\Sigma\)-multiplier that can be defined. Remark that it coincides with the multiplier in~\eqref{eq:perfectExtrapolationSingleFunction}. As an example, for \(f = \sinc^2\) (Fig.~\ref{fig:mr1d:example:f}) all the \(\Sigma\)-multipliers coincide with \(\mf\) (Fig.~\ref{fig:mr1d:example:m}). We denote as \(m\) the \(1\)-periodic extension of \(\mf\) to all of \(\R\). By approximating numerically the solution to the refinement equation using the cascade algorithm we obtain \(\whphi\) and \(\phi\) (Fig.~\ref{fig:mr1d:example:whphi} and Fig.~\ref{fig:mr1d:example:phi} show an approximation with 128 products). Since \(f = \whphi\) we see that \(f\) is a function that can be exactly extrapolated from its low-frequency content. To construct a wavelet, we follow the same strategy as in~\cite[Theorem~5.5]{de_boor_construction_1993}. First, let
\[
    \Phi = \sums_{k\in\Z} |\tau_{k}\whphi|^2
\]
be the \(1\)-periodization of \(|\whphi|^2\) (Fig.~\ref{fig:mr1d:example:Phi} shows an approximation with 257 terms). Then, we can define the wavelet mask (Fig.~\ref{fig:mr1d:test:g})
\[
    g(\xi) = m\left(\xi + \frac{1}{2}\right)^* \Phi\left(\xi + \frac{1}{2}\right)  e^{-2\pi i \xi}.
\]
Then
\[
    \whpsi = D_{1/2} g D_{1/2} \whphi
\]
is the Fourier transform \(\whpsi\) of the wavelet \(\psi\) (Fig.~\ref{fig:mr1d:example:whpsi} and Fig.~\ref{fig:mr1d:test:psi}). As expected, we obtain the \(B\)-spline of degree 1.

\begin{figure*}[!htb]
    \centering
    \begin{subfigure}[t]{0.24\textwidth}
        \includegraphics[width=0.99\textwidth]{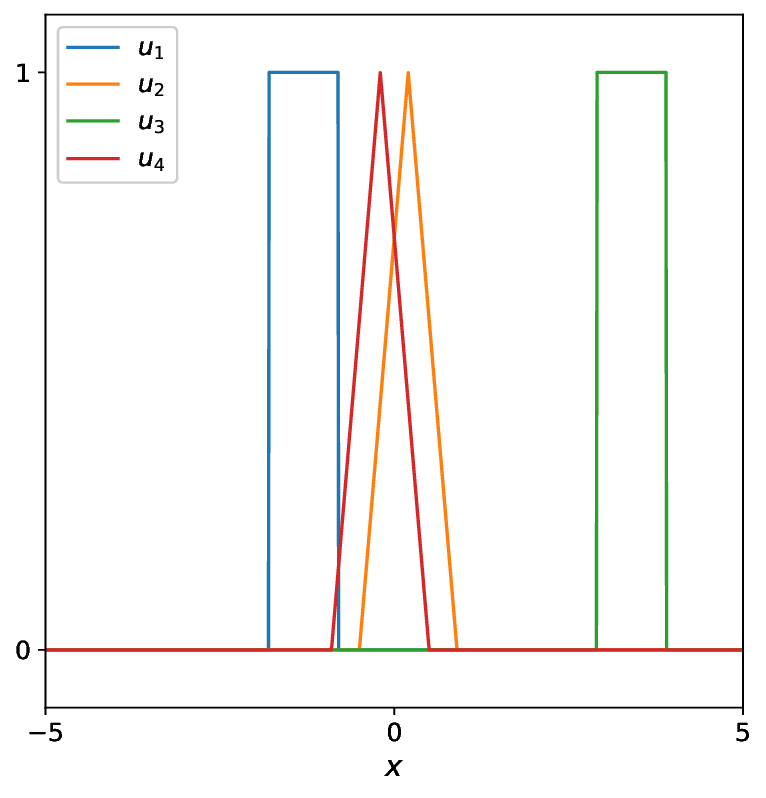}
        \caption{\(\mU\)}
        \label{fig:mr1d:example:f}
    \end{subfigure}%
    \begin{subfigure}[t]{0.24\textwidth}
        \includegraphics[width=0.99\textwidth]{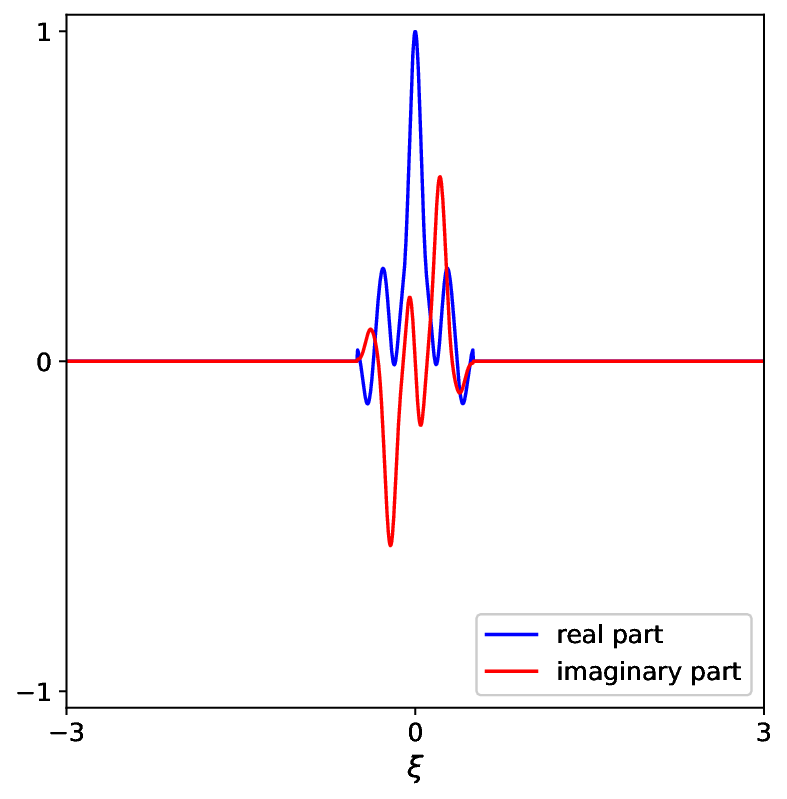}
        \caption{\(m\opt\)}
        \label{fig:mr1d:example:m}
    \end{subfigure}%
    \begin{subfigure}[t]{0.24\textwidth}
        \includegraphics[width=0.99\textwidth]{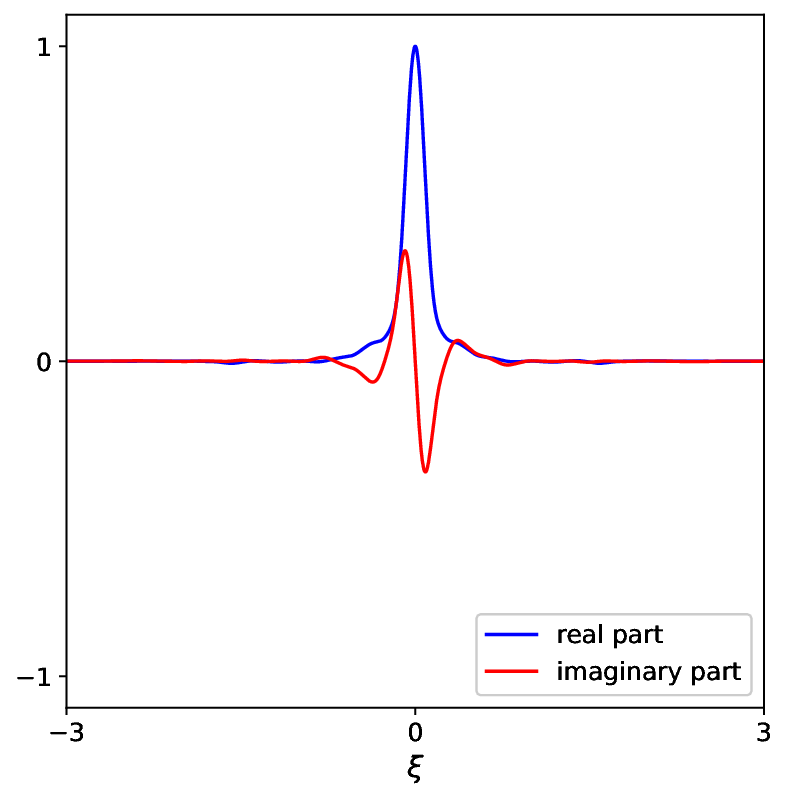}
        \caption{\(\whphi\)}
        \label{fig:mr1d:example:whphi}
    \end{subfigure}%
    \begin{subfigure}[t]{0.24\textwidth}
        \includegraphics[width=0.99\textwidth]{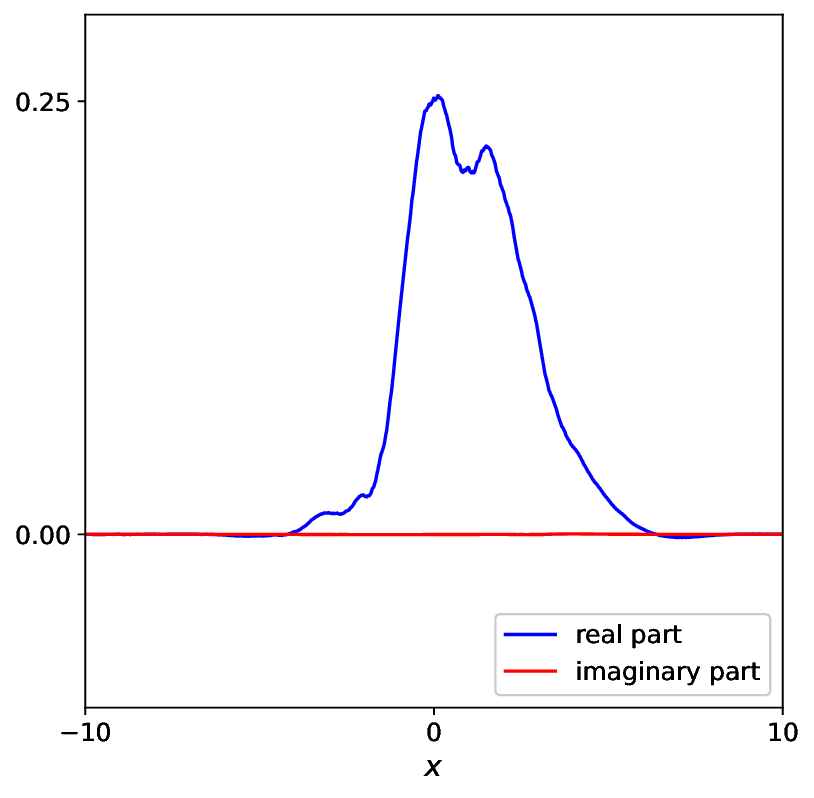}
        \caption{\(\phi\)}
        \label{fig:mr1d:example:phi}
    \end{subfigure}\\
    \begin{subfigure}[t]{0.24\textwidth}
        \includegraphics[width=0.99\textwidth]{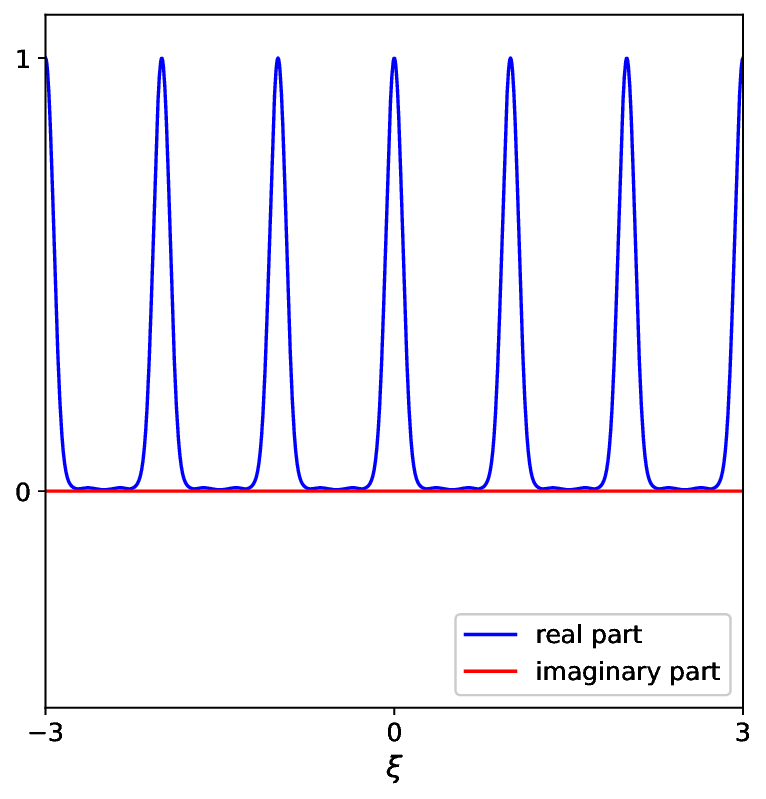}
        \caption{\(\Phi\)}
        \label{fig:mr1d:example:Phi}
    \end{subfigure}%
    \begin{subfigure}[t]{0.24\textwidth}
        \includegraphics[width=0.99\textwidth]{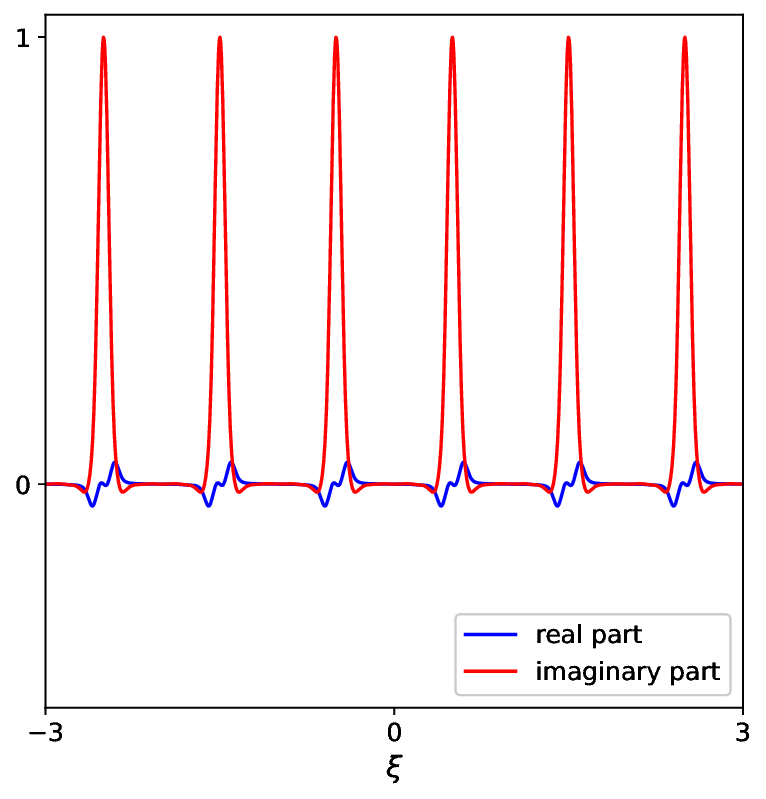}
        \caption{\(g\)}
        \label{fig:mr1d:example:g}
    \end{subfigure}%
    \begin{subfigure}[t]{0.24\textwidth}
        \includegraphics[width=0.99\textwidth]{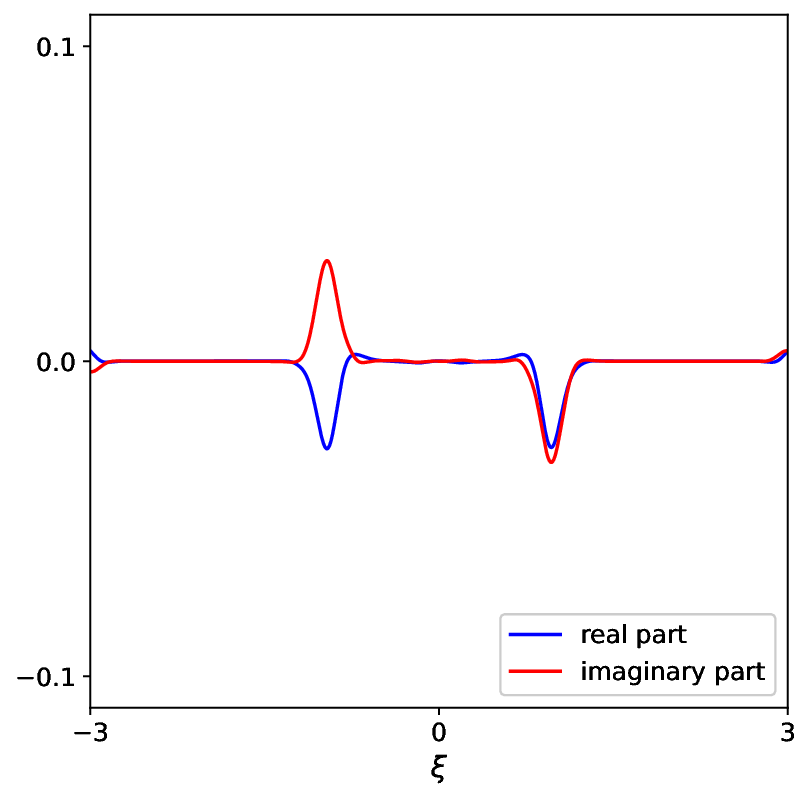}
        \caption{\(\whpsi\)}
        \label{fig:mr1d:example:whpsi}
    \end{subfigure}%
    \begin{subfigure}[t]{0.24\textwidth}
        \includegraphics[width=0.99\textwidth]{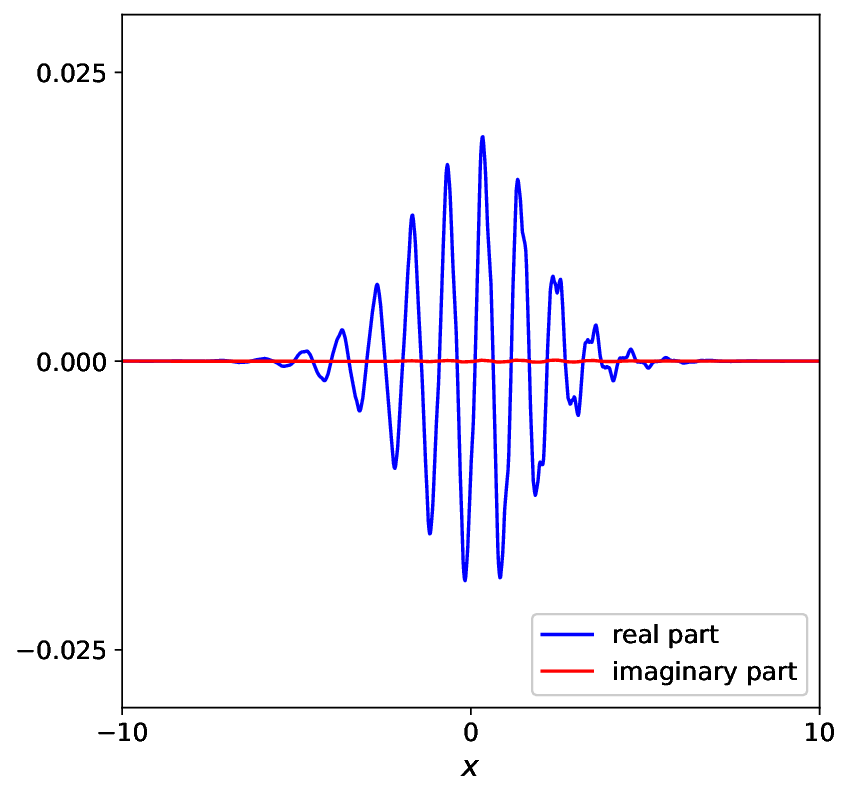}
        \caption{\(\psi\)}
        \label{fig:mr1d:example:psi}
    \end{subfigure}%

    \caption{Multiresolution induced by the collection \(\mU\).
    }
    \label{fig:mr1d:example}
\end{figure*}

Our second experiment shows how from a collection \(\mU\) with 4 elements (Fig.~\ref{fig:mr1d:example:f}) we can also induce a multiresolution. In this case, we compute the optimal multiplier using the regularized fixed-point iteration~\eqref{eq:fixedPointIterationRegularized} with \(W\) corresponding to the unit ball of the nuclear norm, that is, \(\sigma_W(\cdot) = \nrmop{\cdot}\). The parameters that we used were \(\delta = 10^{-6}\), \(\tG = 0.25\), \(\tS = 0.50\) and the algorithm performed a total of 1000 iterations. To compute the integrals, we use Monte Carlo sampling with an increasing number of samples at each iteration, from a minimum of 1000 samples to a maximum of 100000 samples. In this example, we assume that the number of samples is large enough for the quadrature error to be negligible. From the optimal multiplier \(m\opt\) (Fig.~\ref{fig:mr1d:example:m}) we can find an approximation to \(\whphi\) and of \(\phi\) as before (Fig.~\ref{fig:mr1d:example:whphi} and Fig.~\ref{fig:mr1d:example:phi}). Similarly, from the \(1\)-periodization \(\Phi\) (Fig.~\ref{fig:mr1d:example:Phi}) and the wavelet mask \(g\) (Fig.~\ref{fig:mr1d:example:g}) we can compute the Fourier transform \(\whpsi\) of the wavelet \(\psi\) (Fig.~\ref{fig:mr1d:example:whpsi} and Fig.~\ref{fig:mr1d:example:psi}). In this case, the wavelet shares similarities with the Morlet wavelet with center frequency 2.5 and bandwidth 1.5~\cite[cf. Chapter~4]{mallat_wavelet_2009}.

%% ============================================================================
\subsection{Optimal multipliers for the MNIST dataset}
%% ============================================================================

\begin{figure*}[!t]
    \centering
    \begin{subfigure}[t]{0.3\textwidth}
        \includegraphics[width=0.99\textwidth]{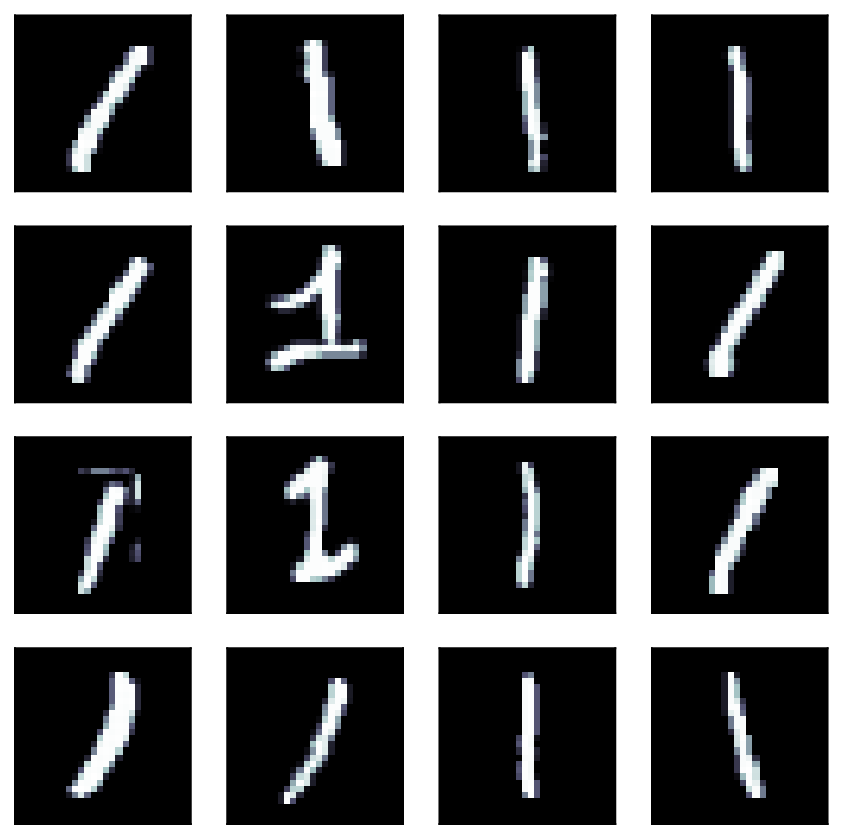}
        \caption{Examples for digit 1}
        \label{fig:mnist:data:d1}
    \end{subfigure}%
    \hspace{6pt}
    \begin{subfigure}[t]{0.3\textwidth}
        \includegraphics[width=0.99\textwidth]{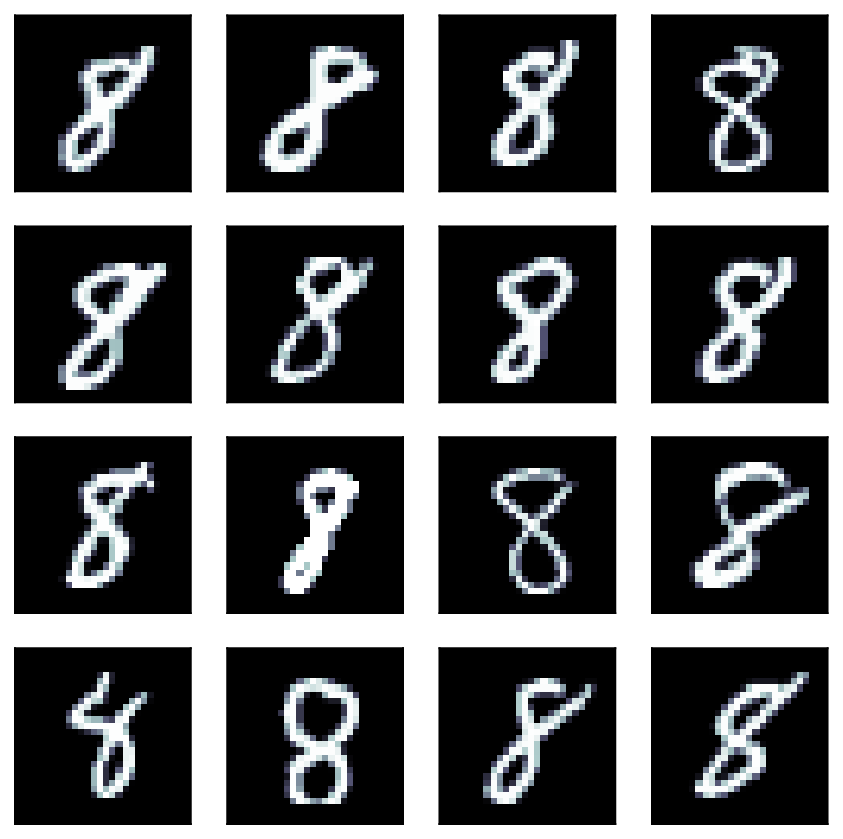}
        \caption{Examples for digit 8}
        \label{fig:mnist:data:d8}
    \end{subfigure}%
    \hspace{6pt}
    \begin{subfigure}[t]{0.3\textwidth}
        \includegraphics[width=0.99\textwidth]{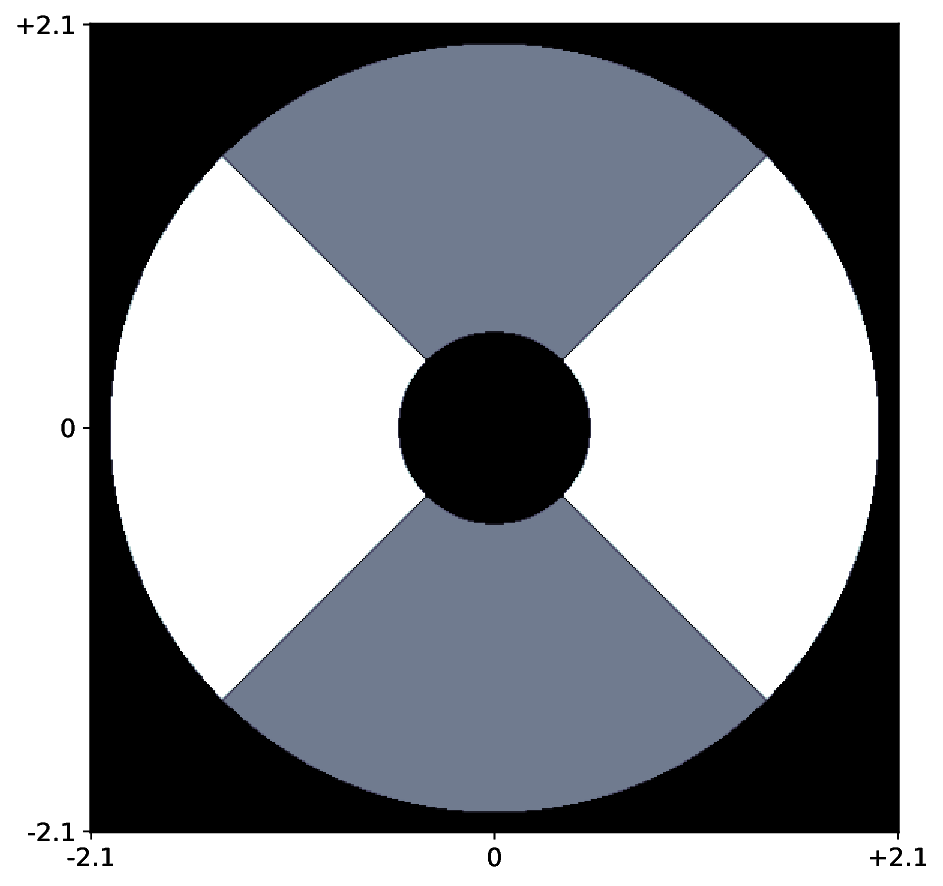}
        \caption{Annular region \(\Omo\)}
        \label{fig:mnist:data:sectors}
    \end{subfigure}%

    \caption{(a, b) Examples for the discrete data used for the digits 1 and 8. (c) Annular region \(\Omo\) and its division into regions.
    }
    \label{fig:mnist:data}
\end{figure*}

In Section~\ref{sec:adaptiveMultipliersDiscreteApproximation} we analyzed the \(\Sigma\)-multiplier associated to a collection \(\mF\) induced by interpolating discrete data. In this case, we consider the MNIST dataset~\cite{lecun_gradient-based_1998,li_deng_mnist_2012} as the source of this discrete data. The MNIST data set consists of images of size 28\(\times\)28 pixels of handwritten digits. To illustrate how an optimal multiplier adapts to this data, we compute it for a subset of \(n = 500\) images of the digits 1 (see Fig.~\ref{fig:mnist:data:d1}), and for a subset of 500 images of the digit 8 (see Fig.~\ref{fig:mnist:data:d8}). In each case the intensities are normalized to \([0, 1]\). To evaluate the elements of \(\mF\) in each case, we let \(N = 28\) and we interpolate each image to the unit square \([0, 1]^2\) as in~\eqref{eq:discreteData:interpolationInSpace} using the indicator function of the unit square as \(\vphi\) and \(\Delta x = 1/N \approx 0.035\). The elements of \(\mF\) are computed explicitly as in~\eqref{eq:discreteData:interpolationInFrequency}.

In this experiment, we use the restriction of the elements in \(\mF\) to the low-frequency domain \(\Omlo := \set{\xi\in \R^2:\,\, \enrm{\xi} \leq 2}\) to find an optimal multiplier that extrapolate their frequency information to the high-frequency domain \(\Omhi := \set{\xi\in\R^2:\,\, \enrm{\xi} \leq 8}\). To do so, we let \(\alpha = 4\) and we define the annular region \(\Omo := \set{\xi\in \R^2:\,\, 1/2 \leq \enrm{\xi} \leq 2}\) shown in Fig.~\ref{fig:mnist:data:sectors}. The optimal multiplier is computed using the regularized fixed-point iteration~\eqref{eq:fixedPointIterationRegularized} with \(W\) corresponding to the unit ball of the nuclear norm, that is, \(\sigma_W(\cdot) = \nrmop{\cdot}\). The parameters that we used were \(\delta = 10^{-6}\), \(\tG = 0.15\), \(\tS = 0.75\) and the algorithm performed a total of 100 iterations. To compute the integrals, we use Monte Carlo sampling with an increasing number of samples at each iteration, from a minimum of 1000 samples to a maximum of 50000 samples. Although in this experiment this does introduce an approximation error that must be taken into account in~\eqref{eq:fixedPointIterationRegularized} the iteration still converges under suitable conditions, namely, an increasing accuracy of the approximations to the integrals. We point the reader to~\cite{bravo_stochastic_2024} for a precise statement of this fact.

Fig.~\ref{fig:mnist:fpePerf} shows some performance metrics for the fixed-point iteration. Remark that the step-size decreases on average, suggesting the convergence of the algorithm on average (Fig.~\ref{fig:mnist:fpePerf:dxnrm}). Although Fig.~\ref{fig:mnist:fpePerf:obj} shows that the objective function {\em increases} we should contrast this fact with the increase in Monte Carlo samples shown in Fig.~\ref{fig:mnist:fpePerf:nnodes}. This suggests that the increase in the objective function is due to the increasing accuracy of the integrals required to perform an iteration.

In Fig.~\ref{fig:mnist:mult} we see the optimal multipliers. In Fig.~\ref{fig:mnist:mult:d1freq} and Fig.~\ref{fig:mnist:mult:d8freq} we see the optimal multipliers in the frequency domain. Both multipliers show a strong diagonal oscillatory component. This is due to the fact that we used \([0, 1]^2\) to interpolate the images instead of the centered domain \([-1/2, 1/2]^2\). In Fig.~\ref{fig:mnist:mult:d1space} and Fig.~\ref{fig:mnist:mult:d8space} we see the optimal multipliers in the spatial domain. We call these the {\em optimal filters}. The optimal filters capture relevant spatial structures that are common to the digit 1 and the digit 8 as we would expect. In particular, the optimal filter for the digit 1 captures both the structure that is common to the digit 1 at the center of the image, and the horizontal strokes that appear in some samples. The optimal filter for the digit 8 captures quite well the two holes and the center region in which the two strokes intersect.

In Fig.~\ref{fig:mnist:extrap:d1} and Fig.~\ref{fig:mnist:extrap:d8} we see the effect of using the optimal multiplier to extrapolate frequency information. In Fig.~\ref{fig:mnist:extrap:d1} we see the extrapolation in the frequency domain for one element of the collection \(\mF\). We see that the frequency extrapolation successfully increases the level of detail in the image. However, the increase is more noticeable near the center, and is less so on the extremes. This is due to the variability in the samples: as shown in Fig.~\ref{fig:mnist:data:d1} there are samples where the digit 1 has horizontal strokes on the extremes. In Fig.~\ref{fig:mnist:extrap:d8} we see the extrapolation in the spatial domain for one element of the collection \(\mF\). In this case, frequency extrapolation successfully reproduces the two holes in the digit 8, even if these are not present in the low-resolution image. Remark that in this case, the increase in the level of detail is more pronounced in the center region where two strokes intersect.

\begin{figure*}[!htb]
    \centering
    \begin{subfigure}[t]{0.24\textwidth}
        \includegraphics[width=0.99\textwidth]{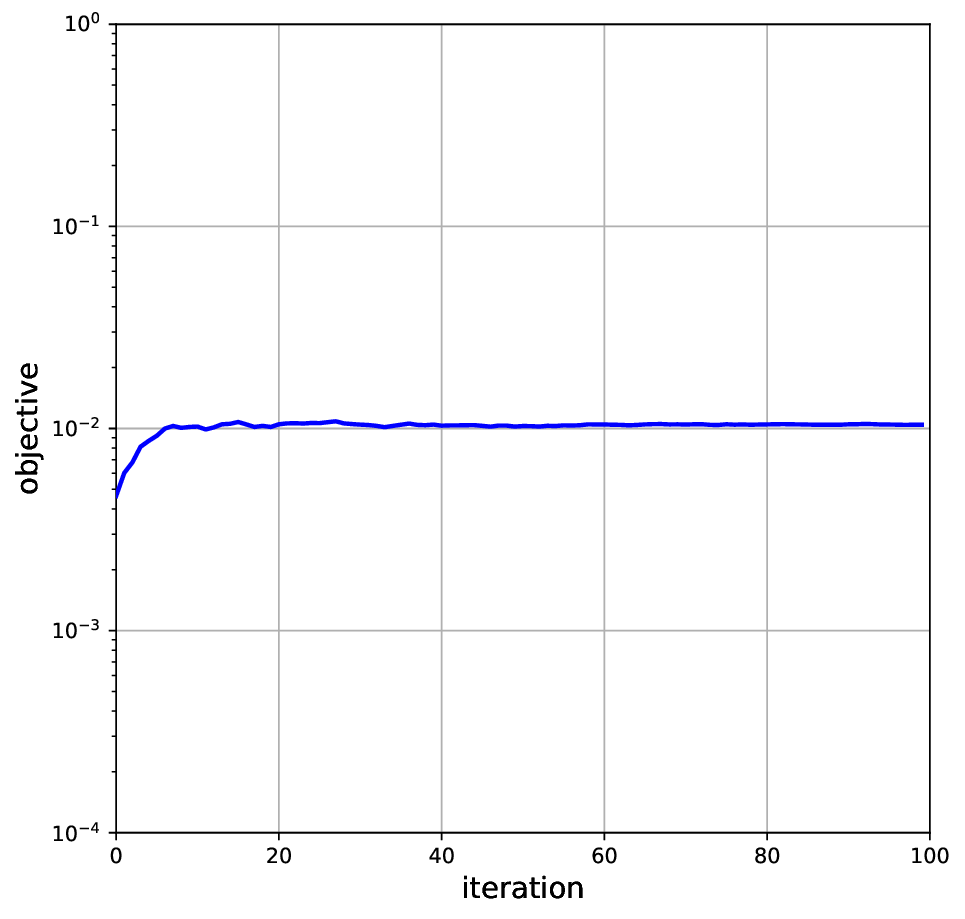}
        \caption{Objective value}
        \label{fig:mnist:fpePerf:obj}
    \end{subfigure}%
    \begin{subfigure}[t]{0.24\textwidth}
        \includegraphics[width=0.99\textwidth]{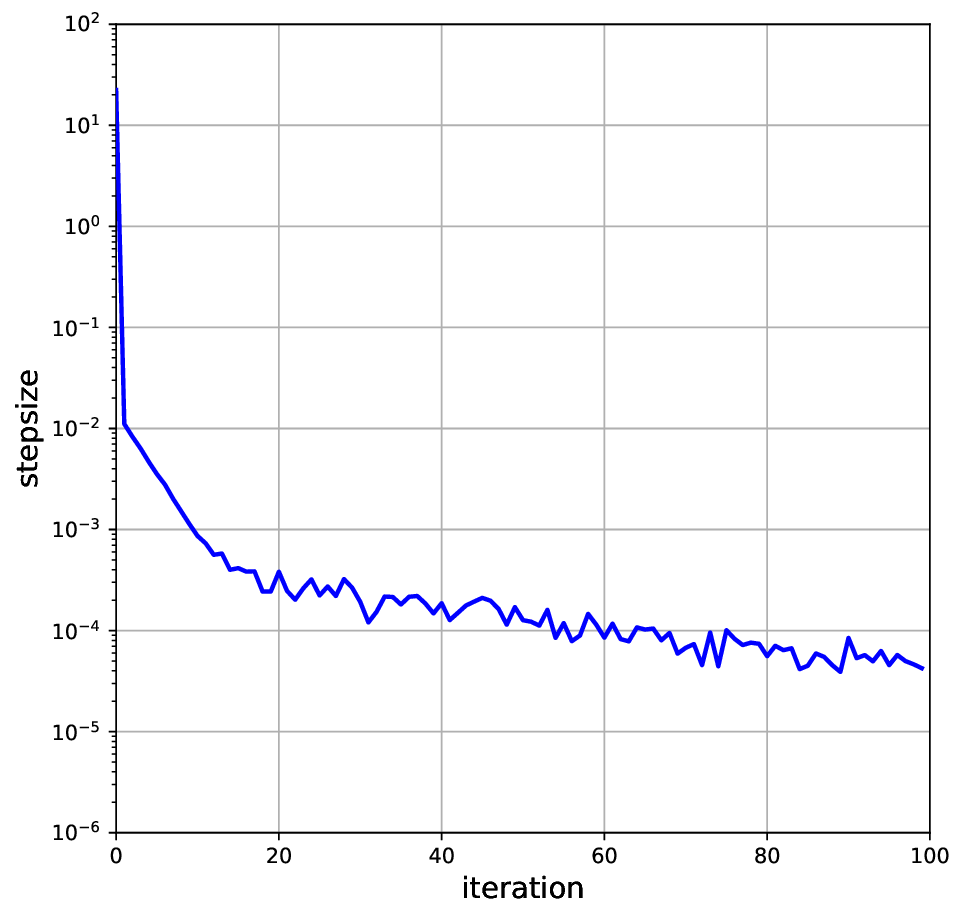}
        \caption{Step size}
        \label{fig:mnist:fpePerf:dxnrm}
    \end{subfigure}%
    \begin{subfigure}[t]{0.24\textwidth}
        \includegraphics[width=0.99\textwidth]{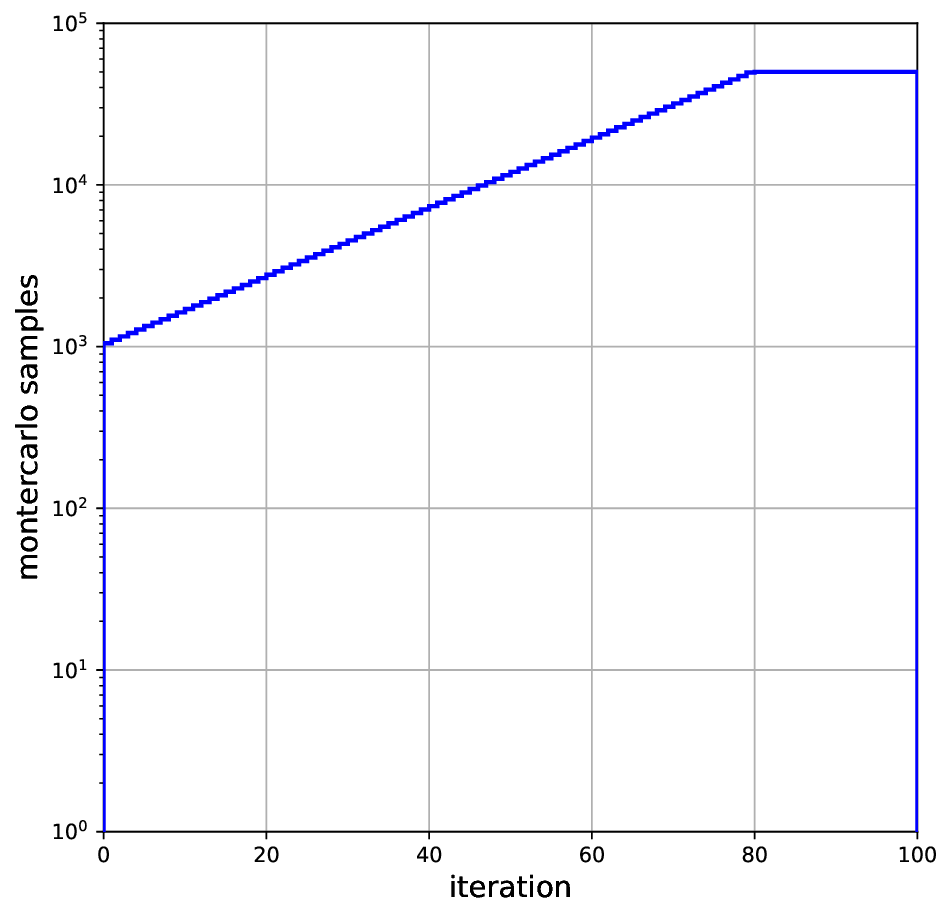}
        \caption{Monte Carlo samples}
        \label{fig:mnist:fpePerf:nnodes}
    \end{subfigure}%
    \begin{subfigure}[t]{0.24\textwidth}
        \includegraphics[width=0.99\textwidth]{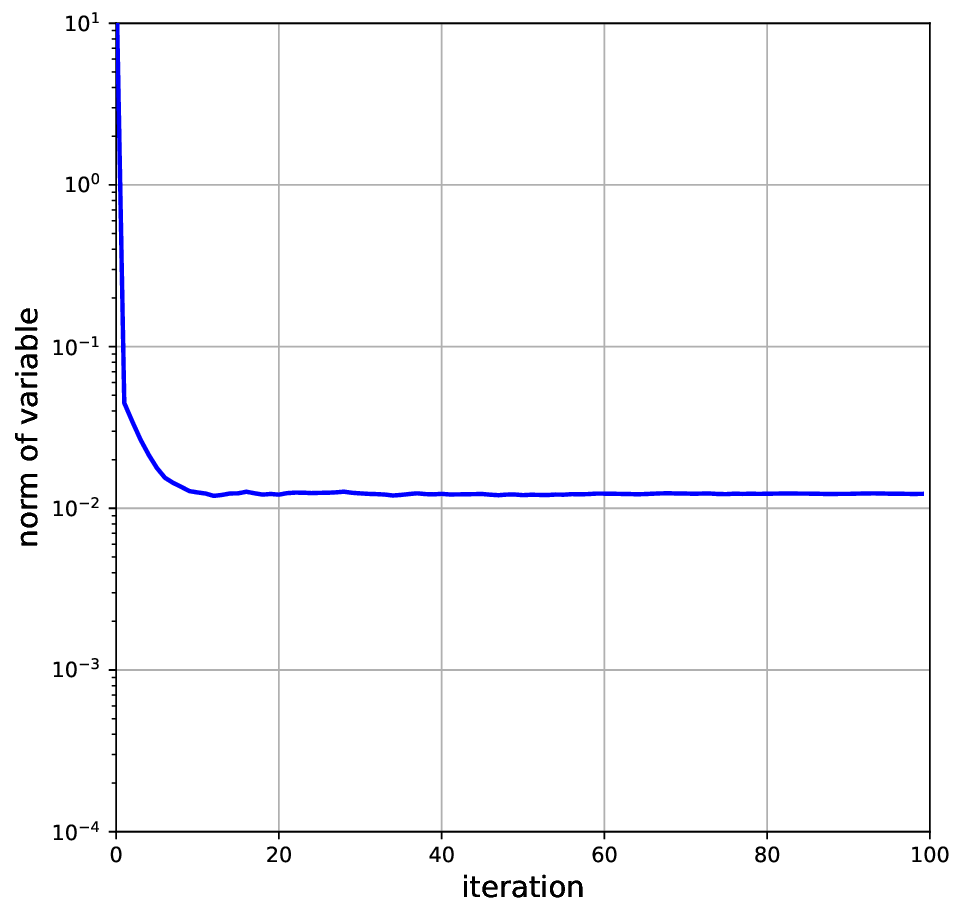}
        \caption{Norm of variable}
        \label{fig:mnist:fpePerf:xnrm}
    \end{subfigure}%

    \caption{Performance metrics for the regularized fixed-point iteration.
    }
    \label{fig:mnist:fpePerf}
\end{figure*}

\begin{figure*}[!htb]
    \centering
    \begin{subfigure}[t]{0.95\textwidth}
        \includegraphics[width=0.99\textwidth]{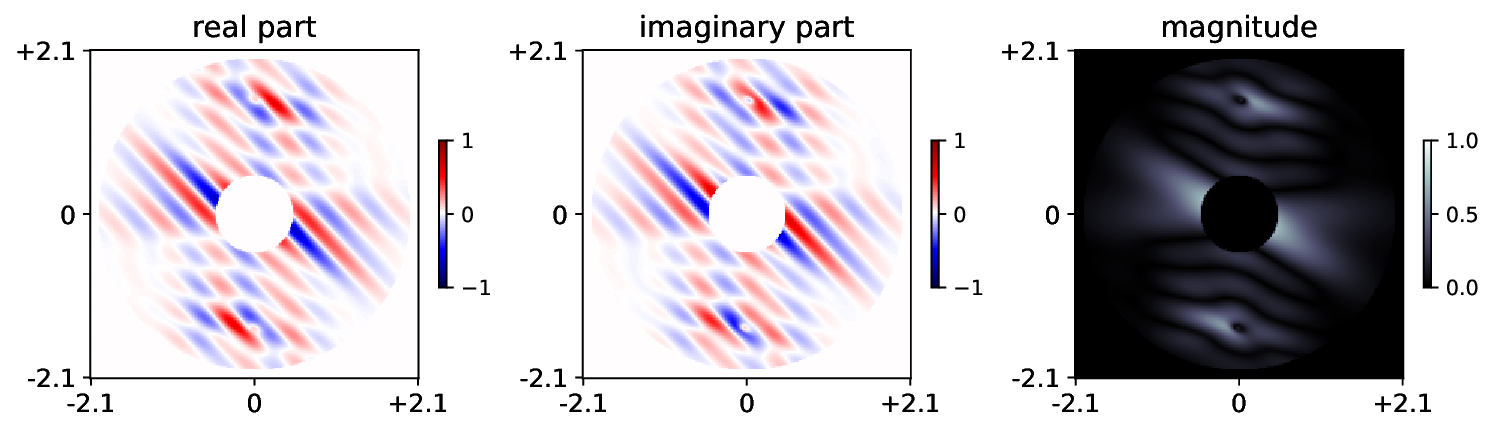}
        \caption{Optimal multiplier for digit 1 in the frequency domain}
        \label{fig:mnist:mult:d1freq}
    \end{subfigure}\\
    \vspace{4pt}
    \begin{subfigure}[t]{0.95\textwidth}
        \includegraphics[width=0.99\textwidth]{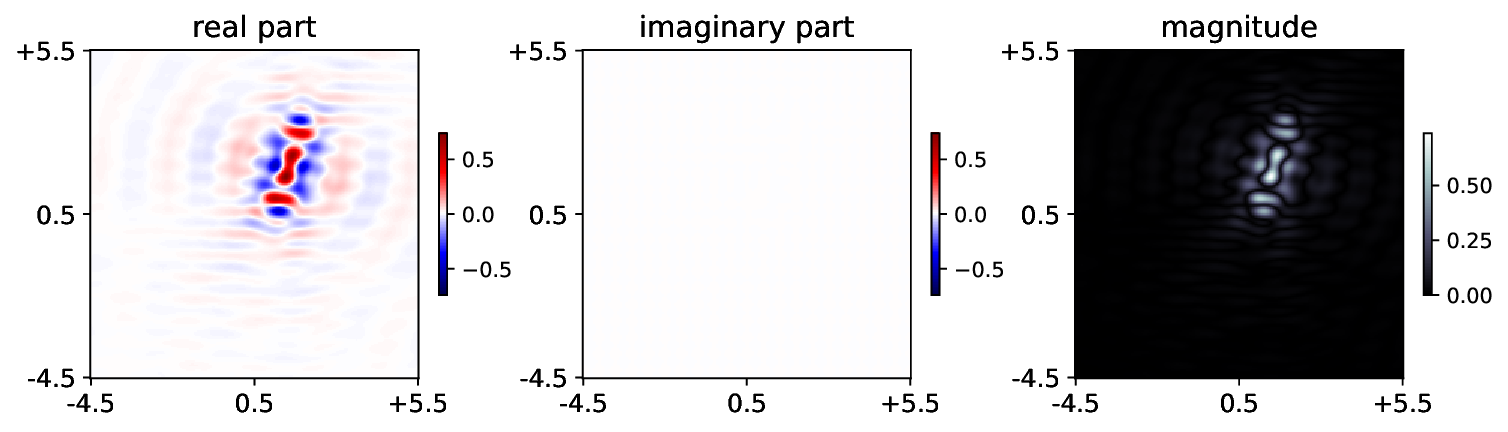}
        \caption{Optimal multiplier for digit 1 in the spatial domain}
        \label{fig:mnist:mult:d1space}
    \end{subfigure}\\
    \vspace{4pt}
    \begin{subfigure}[t]{0.95\textwidth}
        \includegraphics[width=0.99\textwidth]{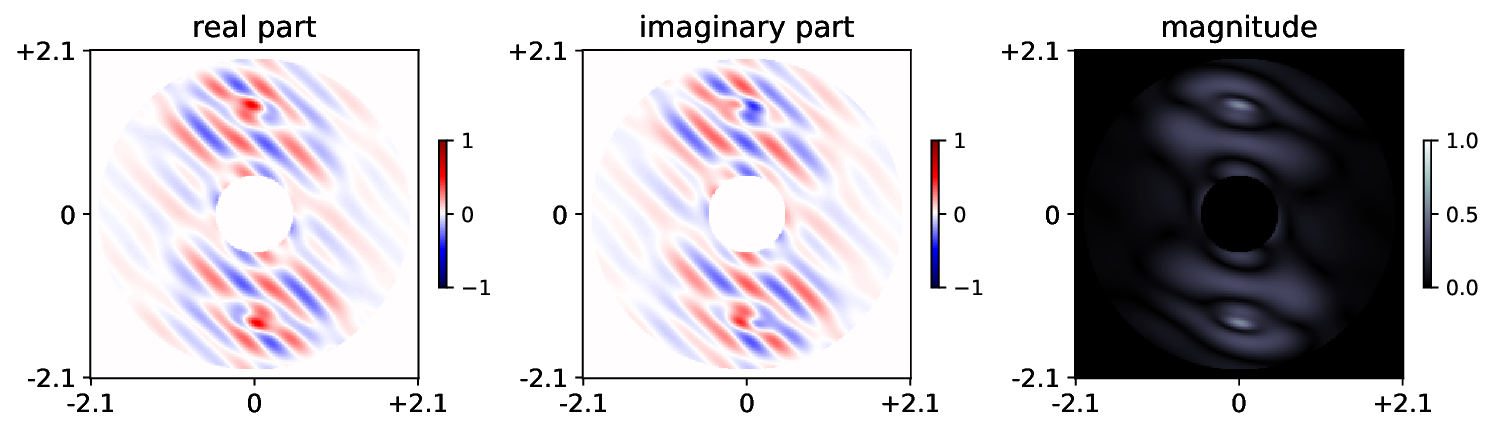}
        \caption{Optimal multiplier for digit 8 in the frequency domain}
        \label{fig:mnist:mult:d8freq}
    \end{subfigure}\\
    \vspace{4pt}
    \begin{subfigure}[t]{0.95\textwidth}
        \includegraphics[width=0.99\textwidth]{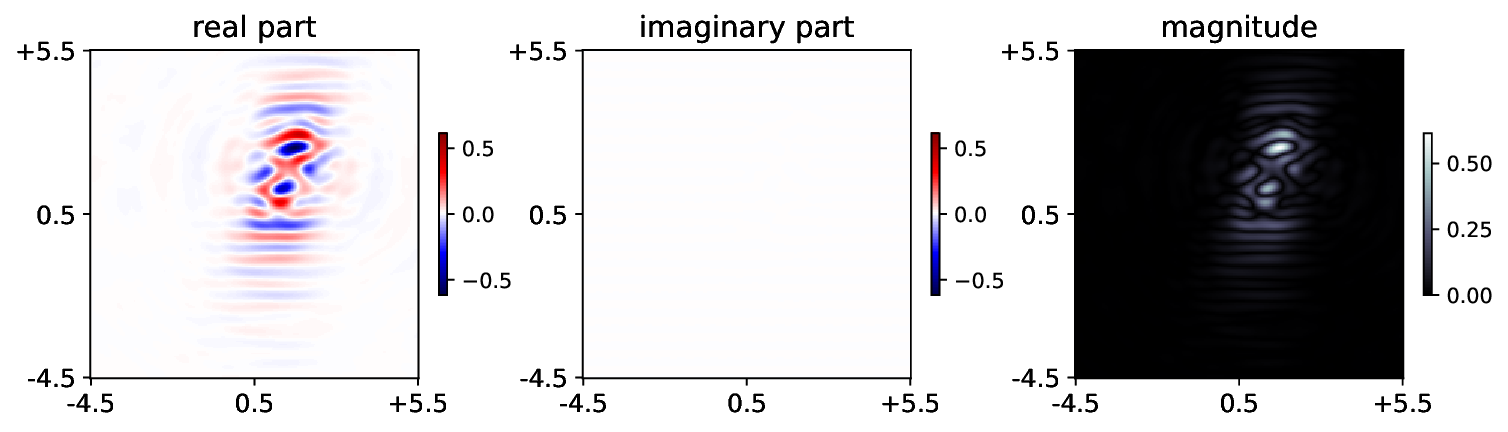}
        \caption{Optimal multiplier for digit 8 in the spatial domain}
        \label{fig:mnist:mult:d8space}
    \end{subfigure}%

    \caption{Optimal multipliers computed from 500 samples for the digit 1 (a, b) and 8 (c, d) in the frequency domain (a, c) and in the spatial domain (b, d).
    }
    \label{fig:mnist:mult}
\end{figure*}

\begin{figure*}[!h]
    \centering
    \begin{subfigure}[t]{0.95\textwidth}
        \includegraphics[width=0.99\textwidth]{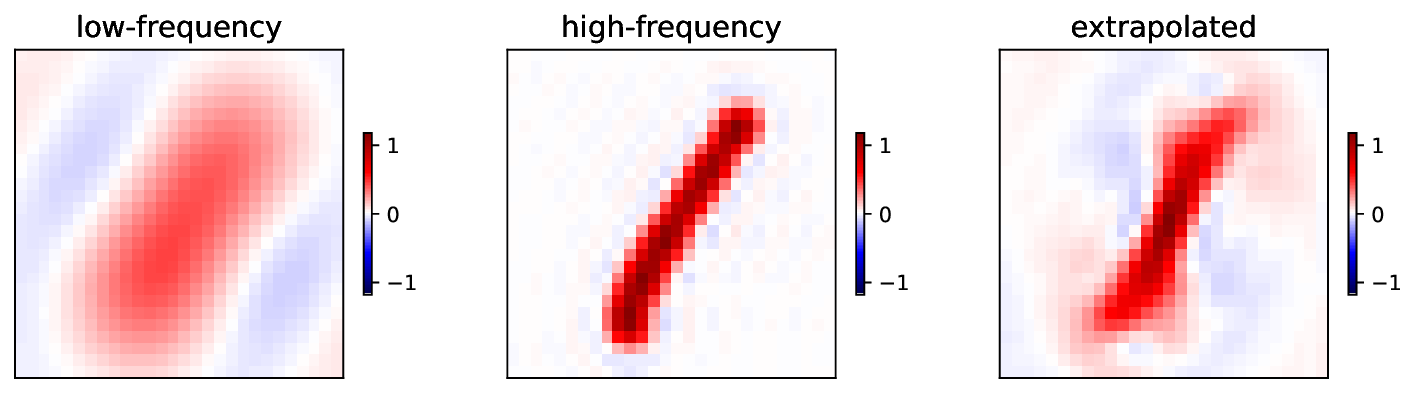}
        \caption{Sample image for the digit 1}
        \label{fig:mnist:extrap:d1}
    \end{subfigure}\\
    \begin{subfigure}[t]{0.95\textwidth}
        \includegraphics[width=0.99\textwidth]{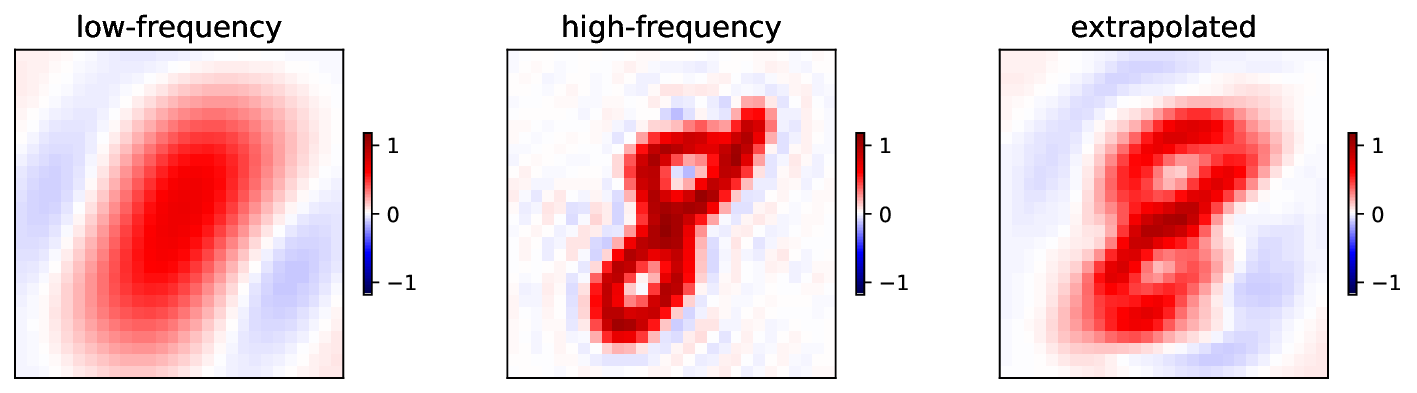}
        \caption{Sample image for the digit 8}
        \label{fig:mnist:extrap:d8}
    \end{subfigure}\\
    \begin{subfigure}[t]{0.95\textwidth}
        \includegraphics[width=0.99\textwidth]{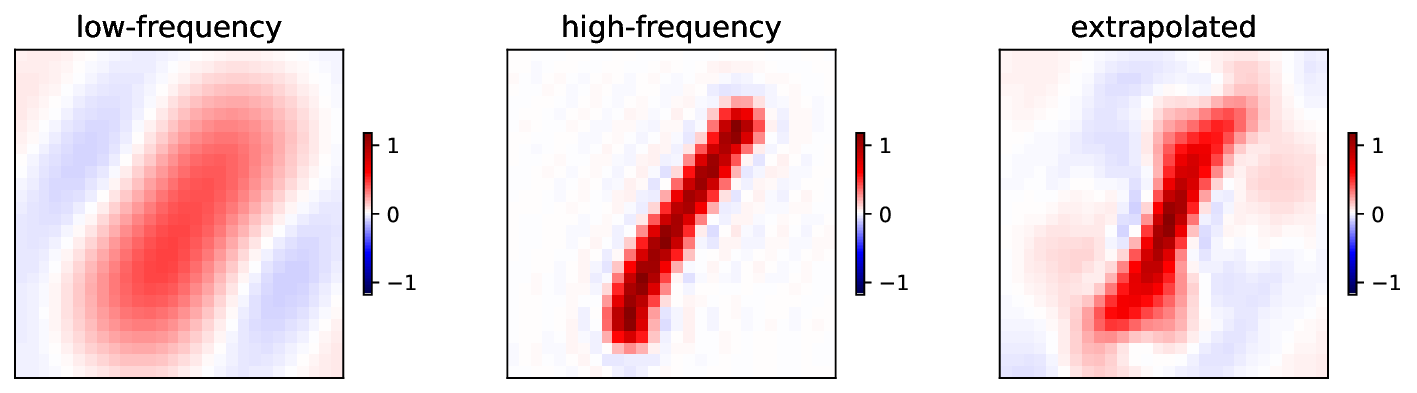}
        \caption{Sample image for the digit 1 using horizontal and vertical multipliers}
        \label{fig:mnist:extrap:d1ani}
    \end{subfigure}\\
    \begin{subfigure}[t]{0.95\textwidth}
        \includegraphics[width=0.99\textwidth]{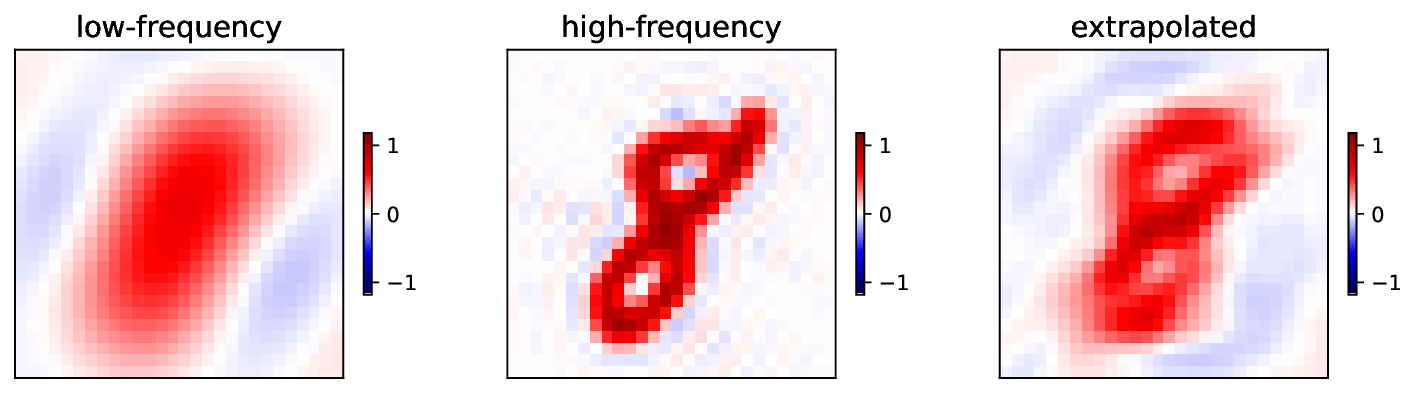}
        \caption{Sample image for the digit 8 using horizontal and vertical multipliers}
        \label{fig:mnist:extrap:d8ani}
    \end{subfigure}%

    \caption{Effect of using the optimal multiplier to perform frequency extrapolation to improve the resolution on a sample image for the digits 1 and 8. The low-frequency image is the approximation of the image obtained from the frequency information on \(\Omlo\) whereas the high-frequency image is the approximation obtained from the frequency information on \(\Omhi\). (a, b) Extrapolation using the optimal multiplier on \(\Omo\). (c, d) Extrapolation using the optimal horizontal and vertical multipliers.
    }
    \label{fig:mnist:extrap}
\end{figure*}

%% ============================================================================
\subsubsection{The effect of localizing horizontal and vertical spatial frequencies}
%% ============================================================================

An interesting question relates to the impact that the domain \(\Omo\) has on the optimal multiplier. In this experiment, we decompose the annular region \(\Omo\) into 4 circular sectors as shown in Fig.~\ref{fig:mnist:data:sectors}. We pair the horizontal sectors and the vertical sectors to preserve the conjugate symmetry in the elements of \(\mF\). This yields a subdivision of \(\Omo\) into two regions representing the essentially horizontal and essentially vertical spatial frequencies. We compute the optimal multipliers associated to each region, which we call the optimal horizontal and vertical multipliers accordingly. Each optimal multiplier is computed in the same manner as in the previous experiment, with the exception that now the algorithm performs 200 iterations.

In Fig.~\ref{fig:mnist:aniMult} we see the optimal filters for the digits 1 and 8 in the spatial domain, i.e., the optimal horizontal and vertical filters. The optimal horizontal filter for the digit 1 in Fig.~\ref{fig:mnist:aniMult:d1H} successfully localizes the orientation of the digit 1. Once again, the filter is localized near the middle of the digit, and becomes diffuse toward its extremes. In contrast, the optimal vertical filter shown in Fig.~\ref{fig:mnist:aniMult:d1V} spreads along the length of the digit. Similarly, the optimal horizontal and vertical filters for the digit 8 in Fig.~\ref{fig:mnist:aniMult:d1H} and Fig.~\ref{fig:mnist:aniMult:d1V} capture the relevant structures, including the presence of two holes in the shape.

In Fig.~\ref{fig:mnist:extrap:d1ani} and Fig.~\ref{fig:mnist:extrap:d8ani} we see the extrapolation in the spatial domain for the same element of the collection \(\mF\) as in Fig.~\ref{fig:mnist:extrap:d1} and Fig.~\ref{fig:mnist:extrap:d8} respectively. Remark that in this simple experiment there does not seem to be any adaptivity: although the optimal horizontal and vertical filters capture different features in the shapes, when put together, the increase in detail in the images is almost the same as that obtained with a single optimal multiplier defined over all of \(\Omo\).

\begin{figure*}[!htb]
    \centering
    \begin{subfigure}[t]{0.4\textwidth}
        \includegraphics[width=0.99\textwidth]{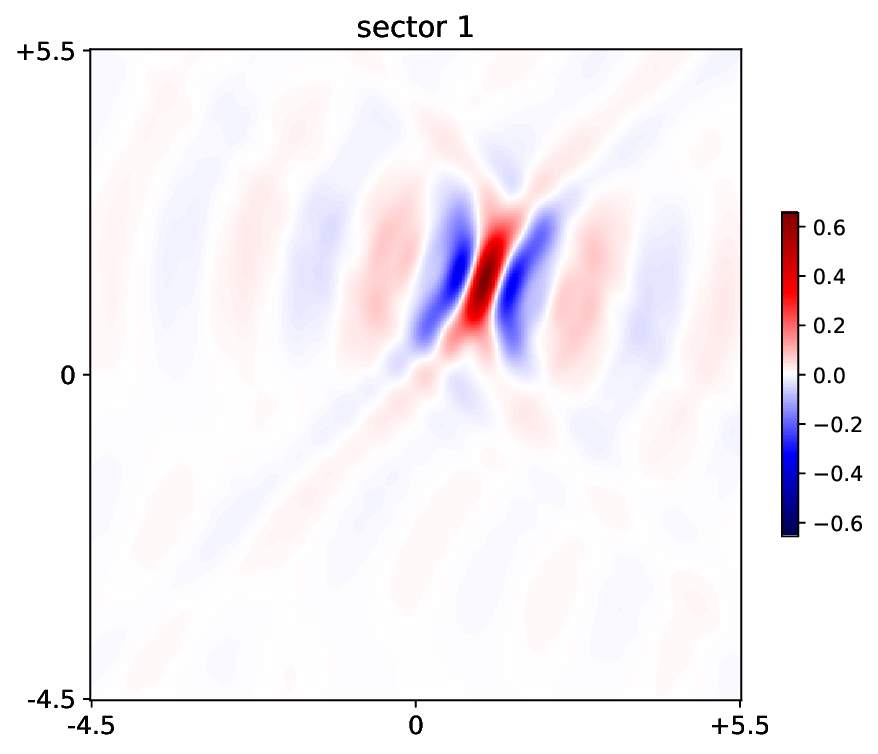}
        \caption{Optimal horizontal multiplier for digit 1}
        \label{fig:mnist:aniMult:d1H}
    \end{subfigure}%
    \begin{subfigure}[t]{0.4\textwidth}
        \includegraphics[width=0.99\textwidth]{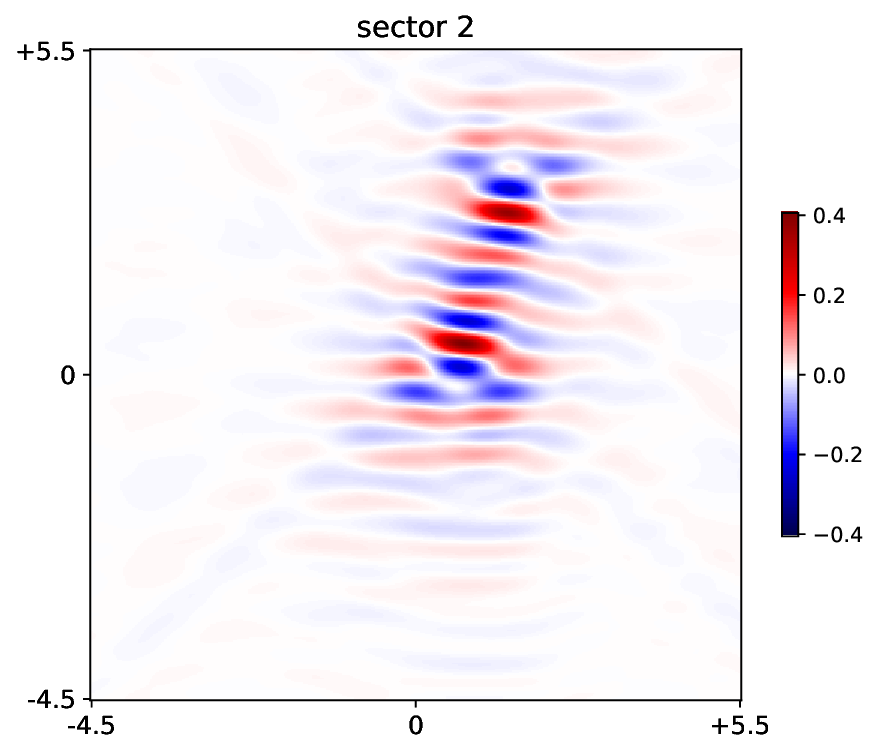}
        \caption{Optimal vertical multiplier for digit 1}
        \label{fig:mnist:aniMult:d1V}
    \end{subfigure}\\
    \vspace{4pt}
    \begin{subfigure}[t]{0.4\textwidth}
        \includegraphics[width=0.99\textwidth]{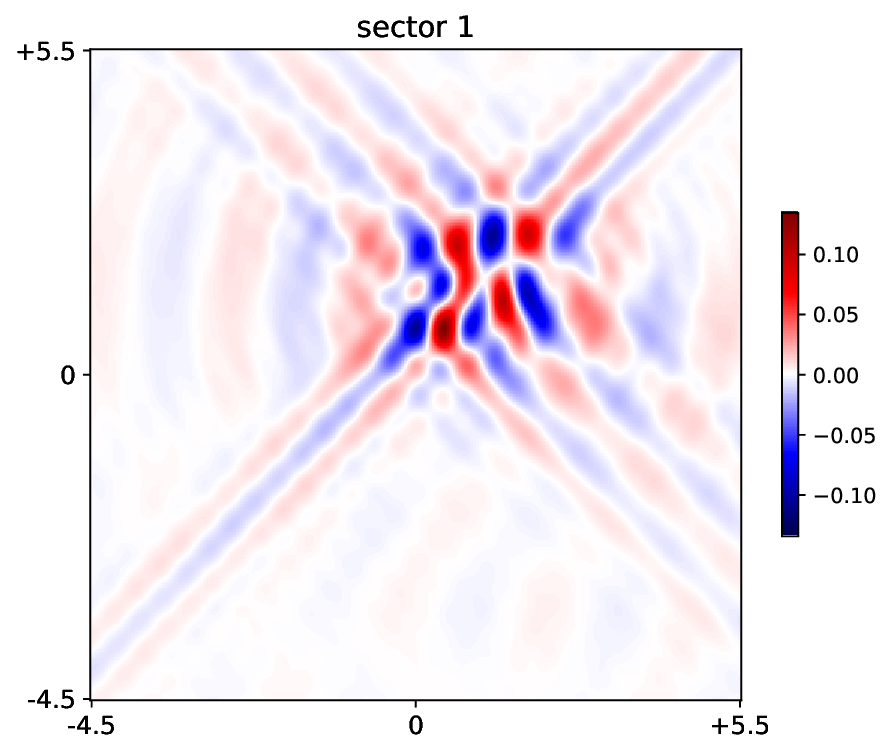}
        \caption{Optimal horizontal multiplier for digit 8}
        \label{fig:mnist:aniMult:d8H}
    \end{subfigure}%
    \begin{subfigure}[t]{0.4\textwidth}
        \includegraphics[width=0.99\textwidth]{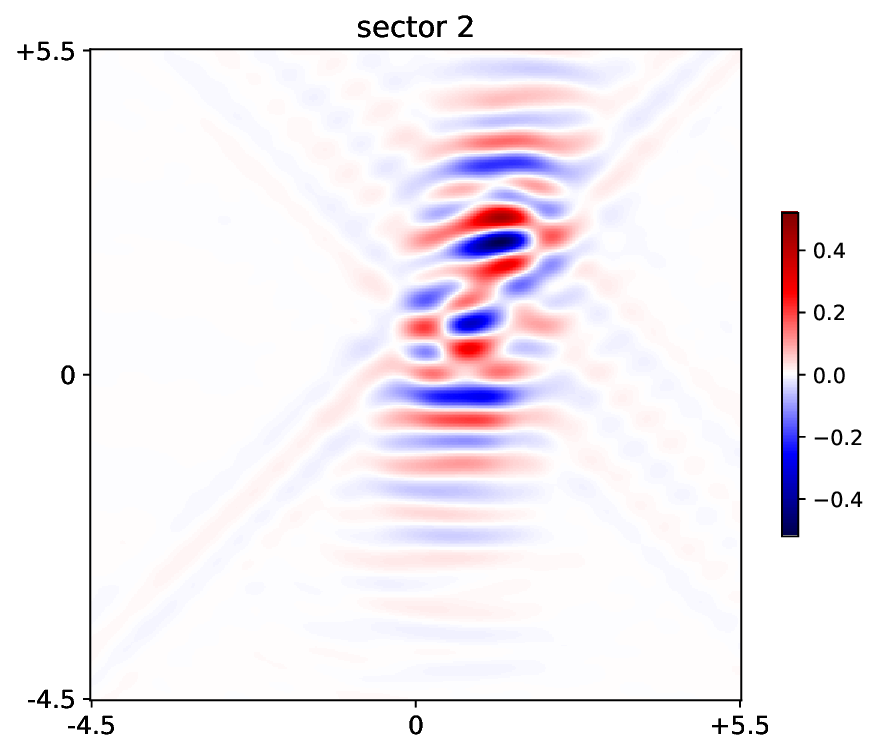}
        \caption{Optimal vertical multiplier for digit 8}
        \label{fig:mnist:aniMult:d8V}
    \end{subfigure}%

    \caption{Optimal horizontal and vertical multipliers. Only the real part is shown as the imaginary part is negligible. (a, b) Optimal multipliers for digit 1. (c, d) Optimal multipliers for digit 8.
    }
    \label{fig:mnist:aniMult}
\end{figure*}

%% ============================================================================
\section{Conclusions and future work}
\label{sec:conclusion}
%% ============================================================================

In this work, we have analyzed the problem of extrapolation in frequency over a collection of functions \(\mU\) supported on the unit cube \(\mQ\) using multipliers. We have shown that there exist multipliers that minimize the worst-case error and that these have a canonical structure. This canonical structure represents how the low-frequency content encodes the high-frequency content accross the collection \(\mU\). In the case of a finite \(\mU\) this canonical structure can be parameterized in terms of an Hermitian matrix, enabling the use of a fixed-point iteration to find the optimal multiplier. Finally, we connect our results to the literature in multiresolution, showing how the optimal multiplier can be used to induce a multiresolution.

There are a several open questions about this approach that are of independent interest. We have shown that when the collection \(\mU\) is formed by translates of a single function, the optimal multiplier depends on their translations. It is of interest to understand how to remove this dependence. Whether this can be achieved, for example by defining a suitable worst-case error or by suitably expanding or reducing the collection \(\mU\), is an open question.

We have only focused on a fixed \(\alpha\). A relevant question is the behavior of the optimal multiplier as \(\alpha \downarrow 1\) and \(\alpha\uparrow \infty\). It is reasonable to think that the behavior of the worst-case error will encode information about the features of the elements of the collection \(\mU\). Finally, our numerical results show that the optimal filters induced by the optimal multipliers can be used to understand global features about the collection \(\mU\). Whether this approach can be used to understand the behavior of data-driven methods for super-resolution, such as convolutional neural networks, is another possible direction for future work.

%% ============================================================================
\section*{Acknowledgements}
%% ============================================================================

D.~C. and C.~SL. were partially funded by grant ANID -- FONDECYT -- 1211643. C.~SL. was also partially funded by the National Center for Artificial Intelligence CENIA FB210017, Basal ANID.

%% ============================================================================
\appendix

%% ============================================================================
\section{Proof of Theorem~\ref{thm:iterationConvergence}}
\label{apx:pf:thm:iterationConvergence}
%% ============================================================================

One of the important properties of orthosymmetric sets is the following.
\begin{proposition}
    Let \(C\subset \Rd\) be convex and orthosymmetric, let \(\vx\in \Rd\) and let \(\vp = \proj_C(\vx)\). The following assertions are true.
    \begin{enumerate}[leftmargin=*,label=(\roman*), itemsep=2pt, parsep=\parskip]
        \item{We have that \(p_i < 0\) if and only if \(x_i < 0\) and \(p_i > 0\) if and only if \(x_i > 0\).
        }
        \item{We have that for any \(\vx\in C\) and \(\vs\in \BInf\) it holds that \(\diag(\vs)\vx \in C\).
        }
    \end{enumerate}
\end{proposition}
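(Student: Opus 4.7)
The plan is to establish (ii) first, since it produces the comparison points that drive the proof of (i).

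For (ii), I would exhibit $\diag(\vs)\vx$ explicitly as a convex combination of the $2^d$ sign-flipped vectors $\{\diag(\veps)\vx : \veps\in\{-1,1\}^d\}$, all of which lie in $C$ by orthosymmetry. Concretely, I would set
\[
    \lambda_{\veps} := \prod_{i=1}^{d} \frac{1 + \eps_i s_i}{2}, \qquad \veps\in\{-1,1\}^d.
\]
Because $|s_i|\le 1$, each factor is non-negative; expanding the product shows $\sum_{\veps}\lambda_{\veps} = 1$, and the $i$-th coordinate of $\sum_{\veps}\lambda_{\veps}\diag(\veps)\vx$ collapses to $s_i x_i$. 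Convexity of $C$ then yields $\diag(\vs)\vx \in C$.

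For (i), the central tool is the variational inequality $\|\vx-\vq\|^2 \ge \|\vx-\vp\|^2$ for every $\vq\in C$, tested against two complementary comparison points built from $\vp$. First, orthosymmetry allows me to flip only the $i$-th sign of $\vp$ to obtain $\vp'\in C$; expanding gives $\|\vx-\vp'\|^2 - \|\vx-\vp\|^2 = 4x_i p_i$, so $x_i p_i \ge 0$, i.e.\ the coordinates cannot have strictly opposite signs. Second, I would apply (ii) with $\vs = (1,\dots,1,\alpha,1,\dots,1)$ ($\alpha\in[0,1]$ in position $i$) to produce $\vp''\in C$ whose $i$-th coordinate is $\alpha p_i$ and which otherwise agrees with $\vp$. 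A direct computation gives
\[
    \|\vx-\vp''\|^2 - \|\vx-\vp\|^2 = (1-\alpha)\bigl(2x_i p_i - (1+\alpha)p_i^2\bigr) \ge 0,
\]
and sending $\alpha\uparrow 1$ yields $p_i^2 \le x_i p_i$. Combined with the first inequality, this shows that $p_i>0$ forces $x_i \ge p_i > 0$ and $p_i<0$ forces $x_i\le p_i <0$, which is the nontrivial direction of the iff's in (i).

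The main obstacle is upgrading the reverse implications in (i) to strict statements: only the two ingredients above, orthosymmetry plus convexity, give $x_i p_i \ge 0$ and $|p_i|\le |x_i|$, but they do not by themselves preclude $p_i=0$ when $x_i\neq 0$ (the degenerate orthosymmetric set $C=\{0\}$ shows this). To close this gap I would invoke the fact that in the intended application $W$ has the origin in its interior, so a perturbation of $\vp$ of the form $(1-t)\vp + t\, r\,\mathrm{sgn}(x_i)\ve_i$ remains in $C$ for small $t>0$; the first-order variation of $\|\vx-\cdot\|^2$ along this direction is strictly negative whenever $p_i=0$ and $x_i\neq 0$, contradicting optimality of $\vp$ and delivering the missing strictness.
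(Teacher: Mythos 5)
The paper states this proposition without proof, so there is nothing to compare your argument against; I evaluate it on its own merits. Your proof of (ii) is correct: the weights \(\lambda_{\varepsilon}=\prod_i(1+\varepsilon_i s_i)/2\) are nonnegative, sum to one, and reproduce \(\diag(\vs)\vx\), and your construction in fact covers the closed ball \(\cBInf\), which you then legitimately use in (i). Your proof of the implications \(p_i>0\Rightarrow x_i>0\) and \(p_i<0\Rightarrow x_i<0\) (indeed \(p_i^2\le x_ip_i\), hence \(|p_i|\le|x_i|\) with matching sign) is also correct, and this is the only direction the paper actually uses downstream: Lemma~\ref{lem:iterationConvergence:minEVBoundProjection} needs \(\vx\ge0\Rightarrow\proj_C(\vx)\ge0\), which is the contrapositive of what you proved.

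The gap is in your attempted repair of the converse implications, and it cannot be closed: those implications are false even under your added hypothesis \(0\in\tint(C)\). Along \(t\mapsto(1-t)\vp+t\,r\,\mathrm{sgn}(x_i)\ve_i\) (with \(\ve_i\) the \(i\)-th standard basis vector) the first-order variation of \(\nrm{\vx-\cdot}^2\) at \(t=0\) is \(-2\bigl(r|x_i|-\iprod{\vx-\vp}{\vp}\bigr)\) when \(p_i=0\), and the term \(\iprod{\vx-\vp}{\vp}=\sum_j(x_j-p_j)p_j\) is nonnegative by your own inequality \(p_j^2\le x_jp_j\) and can dominate \(r|x_i|\); so optimality of \(\vp\) is not contradicted. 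Concretely, take \(d=2\), \(C=\cvxhull(\set{(\pm2,0),(0,\pm1)})\), which is convex, orthosymmetric, and has \(0\) in its interior, and \(\vx=(3,\delta)\) for small \(\delta>0\): the vector \(\vx-(2,0)=(1,\delta)\) lies in the normal cone of \(C\) at the vertex \((2,0)\), so \(\vp=(2,0)\) and \(p_2=0\) while \(x_2>0\). The honest conclusion is that the proposition as stated is too strong; only the one-sided implications you established hold for general convex orthosymmetric sets, and those suffice for the paper's application.
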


We begin with the following auxiliary lemma.

\begin{lemma}\label{lem:iterationConvergence:minEVBoundProjection}
    Suppose that \(W\) satisfies the hypotheses of the statement in Theorem~\ref{thm:iterationConvergence}. If \(\vX \in \Hnn\) is such that \(\vX \succeq 0\) then \(\proj_{W}(\vX) \succeq 0\).
\end{lemma}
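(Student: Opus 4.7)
The plan is to use the spectral representation of $W$ to reduce the matrix projection to a vector projection onto $D_W$, and then to apply part~(i) of the preceding Proposition with $C = D_W$. Since $\vX \in \Hnn$ is positive semidefinite, the spectral theorem provides a unitary $\vU$ and a vector $\vx \in \Rn$ with non-negative entries such that $\vX = \vU \diag(\vx) \vU^*$. The key claim, which I justify below, is that
\[
    \proj_W(\vX) = \vU \diag(\proj_{D_W}(\vx)) \vU^*.
\]

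To establish this identity, I would first observe that $D_W$ is invariant under permutations: for any permutation matrix $P$ (which is unitary) and any $\vd \in D_W$, the identity $P \diag(\vd) P^* = \diag(P\vd)$, together with the unitary invariance of $W$ and the representation~\eqref{eq:spectrallyInvariantRepresentation}, forces $P\vd \in D_W$. Then, for any competitor $\vY = \vV \diag(\vd) \vV^* \in W$, expanding the Frobenius norm yields
\[
    \nrmF{\vY - \vX}^2 = \nrmlEn{\vd}^2 + \nrmlEn{\vx}^2 - 2 \sums_{i,j} a_{ij} d_i x_j
\]
where $a_{ij} := |(\vV^* \vU)_{ij}|^2$. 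Since $\vV^* \vU$ is unitary, the matrix $(a_{ij})$ is doubly stochastic, so by the rearrangement inequality the cross term is bounded above by $\sums_i d_i^\downarrow x_i^\downarrow$, with equality whenever $\vV$ differs from $\vU$ only by a permutation of eigenvectors. Combined with the permutation invariance of $D_W$, this shows the minimum of $\nrmF{\vY - \vX}^2$ over $\vY \in W$ equals $\min_{\vd \in D_W} \nrmlEn{\vd - \vx}^2$, and is attained at $\vY = \vU \diag(\proj_{D_W}(\vx)) \vU^*$.

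With the identity in hand, the proof concludes quickly. Since $\vx$ has non-negative entries and $D_W$ is convex and orthosymmetric by assumption, part~(i) of the preceding Proposition applied componentwise implies that $\vp := \proj_{D_W}(\vx)$ also has non-negative entries. Hence $\diag(\vp) \succeq 0$, and since conjugation by the unitary $\vU$ preserves positive semidefiniteness, $\proj_W(\vX) = \vU \diag(\vp) \vU^* \succeq 0$, as claimed.

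The main obstacle is the reduction $\proj_W(\vX) = \vU \diag(\proj_{D_W}(\vx)) \vU^*$, which is the classical fact that projections onto unitarily invariant matrix sets preserve the eigenvector frame of the input. The argument relies on von Neumann's trace inequality---equivalently, the rearrangement inequality applied via a doubly stochastic matrix---combined with the permutation invariance of $D_W$ inherited from the unitary invariance of $W$. Everything else is a direct componentwise reading of the preceding Proposition.
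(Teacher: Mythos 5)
Your proof is correct and takes essentially the same route as the paper's: reduce the matrix projection to the vector projection of the eigenvalue vector onto \(D_{W}\) in the eigenbasis of \(\vX\), then use orthosymmetry (part (i) of the preceding Proposition) to conclude that the projected vector is non-negative. The only difference is that you justify the reduction carefully via von Neumann's trace inequality and doubly stochastic matrices, whereas the paper merely expands the Frobenius norm in the eigenbasis and asserts the equivalence, so your write-up is in fact the more complete of the two.
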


\begin{proof}[Proof of Lemma~\ref{lem:iterationConvergence:minEVBoundProjection}]
    \parskip = \pskip

    If \(\vX\in W\) then there is nothing to prove. Thus, without loss we assume that \(\vX\notin W\).

    Let \(\vX = \vV\vLam\vV\adj\) and let \(\lamX \in \Rnp\) be the vector of eigenvalues of \(\vX\) in decreasing order. Then, the objective in
    \[
    \begin{aligned}
        & \underset{}{\text{minimize}}
        & & \nrmF{\vX - \vZ}
        & \text{subject to}
        & & \vZ\in W
    \end{aligned}
    \]
    satisfies
    \[
        \nrmF{\vX - \vZ}^2 = \sums_{i=1}^n(\lam_i - Z'_{i,i})^2 + \sums_{i,j:\, i\neq j} |Z'_{i,j}|^2
    \]
    where \(\vZ' = \vV\adj\vZ\vV\). Therefore, the above problem is equivalent to
    is equivalent to
    \[
        \begin{aligned}
            & \underset{}{\text{minimize}}
            & & \nrm{\lamX - \vd}_2
            & \text{subject to}
            & & \vd\in D_{W}.
        \end{aligned}
    \]
    Let \(\vd\opt\) be the optimal variable. Since \(D_{W}\) is orthosymmetric and \(\lamX \geq 0\) it is straightforward to prove that \(\vd\opt\geq 0\) from where the lemma follows.
\end{proof}

To prove Theorem~\ref{thm:iterationConvergence} we first prove some preliminary bounds and results. First, we find a suitable convex and compact set on which to analyze the fixed-point equation. Choose \(\Delta > 0\) such that
\[
    \Delta \geq \sup\set{\trace(\vX):\,\, \vX\in W}
\]
and \(\Delta > n\) and define the convex and compact set
\[
    \Kd = \set{\vX\in\Hnn:\,\, \vX\succeq 0,\,\, \trace(\vX) \leq \Delta}.
\]
This will be the space on which we will analyze the fixed-point equation. It will be useful to define the condition number \(\kappa = n + \Delta/\delta\).

We now analyze some functions involved in the fixed-point iteration. First, define \(\Md:\Kd \to \LEOmo\) as
\[
    \Md(\vSig) = \frac{\bprod{\vf}{\Da\vf}_{\delta I + \Sig}}{\nrm{\vf}^2_{\delta I + \Sig}}.
\]
% boundedness
We first show it is bounded. Using the Cauchy-Schwarz inequality we obtain
\[
    |\Md(\vSig)| \leq \frac{\nrm{\Da\vf}_{\delta I + \Sig}}{\nrm{\vf}_{\delta I + \Sig}} \leq \frac{1}{\delta} \frac{\trace(\Da\vf\Da\vf^* \vSig)}{\nrmlEn{\vf}}\leq \frac{\delta n + \trace(\vSig)}{\delta}\frac{\nrmlEn{\Da\vf}}{\nrmlEn{\vf}} \leq \kappa\frac{\nrmlEn{\Da\vf}}{\nrmlEn{\vf}}.
\]
This yields both the pointwise and uniform bound
\begin{align*}
    \nrmLE{\Md(\vSig)} &\leq \frac{\sqrt{|\Omo|}(\delta n + \trace(\vSig))}{\delta} \left(\frac{1}{|\Omo|}\int_{\Omo} \frac{\nrmlEn{\Da\vf(\xi)}^2}{\nrmlEn{\vf(\xi)}^2}\, d\xi\right)^{1/2} \\
    &\leq \kappa \sqrt{|\Omo|} \left(\frac{1}{|\Omo|}\int_{\Omo} \frac{\nrmlEn{\Da\vf(\xi)}^2}{\nrmlEn{\vf(\xi)}^2}\, d\xi\right)^{1/2}
\end{align*}
for any \(\vSig \in \Kd\). Let \(R_M = R_M(\delta,\Delta)\) be the uniform bound in the right-hand side.
% Lipschitz continuity
We now prove it is Lipschitz continuous. From
\begin{align*}
    \Md(\vSig_2) - \Md(\vSig_1) &= \frac{\bprodlEn{\vf}{(\vSig_2 - \vSig_1)\vf}}{\nrm{\vf}^2_{\delta I + \Sigma_2}} - \bprodlEn{\vf}{(\delta\vI + \vSig_1)\Da\vf}\frac{\bprodlEn{\vf}{(\vSig_2 - \vSig_1)\Da\vf}}{\nrm{\vf}^2_{\delta I +\Sigma_2}\nrm{\vf}_{\delta I + \Sigma_1}^2}\\
    &= \frac{\trace(\vf\vf^*(\vSig_2 - \vSig_1))}{\nrm{\vf}^2_{\delta I + \Sig_2}} - \trace(\Da\vf\vf^*(\delta \vI + \vSig_1))\frac{\trace(\Da\vf\vf^*(\vSig_2 - \vSig_1))}{\nrm{\vf}^2_{\delta I + \Sigma_2}\nrm{\vf}_{\delta I + \Sigma_1}^2}
\end{align*}
we deduce that
\begin{align*}
    \nrmF{\Md(\vSig_2) - \Md(\vSig_1)} &\leq \frac{\nrmF{\vf\vf^*}}{\delta\nrmlEn{\vf}^2} \nrmF{\vSig_2 - \vSig_1} + \nrmF{\delta \vI + \vSig_1}\frac{\nrmF{\vf\Da\vf^*}^2}{\delta^2 \nrmlEn{\vf}^4}\nrmF{\vSig_2 - \vSig_1}\\
    &\leq \frac{1}{\delta} \nrmF{\vSig_2 - \vSig_1} + \frac{\kappa}{\delta}\frac{\nrmlEn{\Da\vf}^2}{\nrmlEn{\vf}^2}\nrmF{\vSig_2 - \vSig_1}
\end{align*}
where we used the bounds \(\sqrt{n} \leq n\) and \(\nrmF{\vSig_1} \leq \trace(\vSig_1)\). Hence,
\begin{align*}
    \nrmLE{\Md(\vSig_2) - \Md(\vSig_1)} &\leq\frac{\sqrt{\mu(\Omo)}}{\delta} \nrmF{\vSig_2 - \vSig_1} + \frac{\kappa\sqrt{\mu(\Omo)}}{\delta}\left(\frac{1}{\mu(\Omo)}\int_{\Omo}\frac{\nrmlEn{\Da\vf(\xi)}^4}{\nrmlEn{\vf(\xi)}^4}\,d\xi\right)^{1/2}\nrmF{\vSig_2 - \vSig_1}\\
    &\leq \frac{\sqrt{\mu(\Omo)}}{\delta}\left(1 + \kappa\left(\frac{1}{\mu(\Omo)}\int_{\Omo}\frac{\nrmlEn{\Da\vf(\xi)}^4}{\nrmlEn{\vf(\xi)}^4}\,d\xi\right)^{1/2}\right)\nrmF{\vSig_2 - \vSig_1}
\end{align*}
for any \(\vSig_1,\vSig_2\in \Kd\). Let \(L_M= L_M(\delta,\Delta)\) be this Lipschitz constant. Then, we see that
\[
    \Md : \Kd \mapsto R_M\bar{B}_{\LE}
\]
is Lipschitz continuous.
% boundedness
We now analyze the properties of the matrix-valued map \(\vG\). To prove it is Lipschitz continuous, remark that
\begin{align*}
    G_{i,j}(m_2) - G_{i,j}(m_1) &= -\int_{\Omo}(f_i(\xi)\Da f_j(\xi)^*(m_2(\xi)- m_1(\xi)) + \Da f_i(\xi)f_j(\xi)^* (m_2(\xi)- m_1(\xi))^*)\,d\mu(\xi) \\
    &\quad +\:\int_{\Omo} f_i(\xi) f_j(\xi)^* (|m_2(\xi)| + |m_1(\xi)|)(|m_2(\xi)| - |m_1(\xi)|)\, d\mu(\xi).
\end{align*}
It follows that
\begin{align*}
    |G_{i,j}(m_2) - G_{i,j}(m_1)| &\leq  (\nrmVF{f_i}\nrmVF{\Da f_j} + \nrmVF{\Da f_i}\nrmVF{f_j})\nrmLE{m_2 - m_1} \\
    &\quad +\: \nrmVF{f_i}\nrmVF{f_j}(\nrmLE{m_2} + \nrmLE{m_1})\nrmLE{m_2 - m_1}.
\end{align*}
and we obtain the bound
\[
    \nrmF{\vG(m_2) - \vG(m_1)} \leq n R_{\mF} (2 + \nrmLE{m_2} + \nrmLE{m_1})\nrmLE{m_2 - m_1}.
\]
Define the constant
\[
    R_{\mF} := \max\set{\nrmVF{f_i},\, \nrmVF{\Da f_i}:\,\, i,j\in\set{1,\ldots, n}}.
\]
We conclude that for \(\vSig_2,\vSig_1\in \Kd\) we have that
\[
    \nrmF{\vG(\Md(\vSig_2)) - \vG(\Md(\vSig_1))} \leq 2 n R_{\mF} L_M (1 + R_M)\nrmF{\vSig_2 - \vSig_1}.
\]
% fixed-point map
Now, define the map
\[
    H(\vSig) = \proj_{W}(\tS\vSig + \tG G(\Md(\vSig))).
\]
Let \(\vSig \in \Kd\) and observe that from Lemma~\ref{lem:iterationConvergence:minEVBoundProjection} it follows that
\[
    \lam_{\min}(H(\vSig)) \geq 0.
\]
In addition, our choice of \(\Delta\) implies that
\[
    \trace(H(\vSig)) \leq \Delta.
\]
Hence, \(H:\Kd\to\Kd\) and, from our previous result and the fact that projections are non-expansive, it follows that \(H\) is Lipschitz continuous. Therefore, by Schauder's fixed-point theorem~\cite[Theorem~12.19]{fabian_banach_2011}, it has at least one fixed point on \(\Kd\). We can show that it is, in fact, contractive for a suitable choice of \(\tS\) and \(\tG\). In fact,
\begin{align*}
    \nrmF{H(\vSig_2) - H(\vSig_1)} &\leq \tS\nrmF{\vSig_2 - \vSig_1} + \tG\nrmF{G(\Md(\vSig_2)) - G(\Md(\vSig_1))}\\
    &\leq (\tS + 2n \tG R_{\mF} L_M (1 + R_M)) \nrmF{\vSig_2 - \vSig_1}
\end{align*}
whence it suffices to choose
\[
    \tS + 2n \tG R_{\mF} L_M (1 + R_M) < 1.
\]
From the previous calculations, it is clear that there exists constants \(C_1,C_2,C_3 > 0\) independent of \(\delta\) and \(\Delta\) such that
\[
    2n R_{\mF} L_M(1 + R_M) = \sums_{k=1}^3 C_k\delta^{-k}
\]
from where the theorem follows.

\section{Proofs for Section~\ref{sec:connectionsToMultiresolution}}\label{proof:connectionsToMRA}

In this section we assume throughout that \(\Omo = [-1/2, 1/2]^d\) for any \(d\in \N\) unless stated otherwise. The results in Section~\ref{sec:connectionsToMultiresolution} are a consequence of the following two auxiliary results.

\begin{lemma}\label{lem:boundedMultiplierConvergentCascade}
    Let \(m:\Omo \to \C\) be measurable, bounded almost everywhere, and such that \(m(0) = 1\). If \(m\) is \(\gamma\)-Hölder continuous on a neighborhood of the origin for some \(\gamma\in (0, 1]\) then using the \(1\)-periodic extension of \(m\) as a refinement mask produces a sequence that converges pointwise to a function that is finite almost everywhere. Furthermore, the convergence is almost everywhere uniform on compact subsets of \(\Rd\).
\end{lemma}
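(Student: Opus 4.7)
The plan is to split the cascade product $\whphi_N(\xi) = \prod_{j=1}^N \whh(2^{-j}\xi)$, where $\whh$ denotes the $1$-periodic extension of $m$, into a finite head that is only measurable and a.e.\ bounded and a tail whose convergence is forced quantitatively by the Hölder hypothesis at the origin. The key observation is that for any fixed compact $K \subset \Rd$, only finitely many indices $j$ can make $2^{-j}\xi$ lie outside the Hölder neighborhood of the origin; beyond that index, the bound $|m(\xi)-1| \leq C|\xi|^{\gamma}$ takes effect and forces the tail to behave like a classical absolutely convergent infinite product.

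To set this up, I would first fix $K \subset \Rd$ compact with $R_K := \sup_{\xi \in K}|\xi|$ and pick $r, C > 0$ so that $|m(\xi) - 1| \leq C |\xi|^{\gamma}$ for $|\xi| \leq r$. Choosing $j_0$ large enough that $2^{-j_0} R_K \leq r$, the bound
\[
    |\whh(2^{-j}\xi) - 1| \leq C R_K^{\gamma}\, 2^{-j\gamma}
\]
would then hold for every $j \geq j_0 + 1$ and every $\xi \in K$ without any exceptional set, because $m$ is continuous on the Hölder neighborhood. The classical criterion for infinite products then yields that the tail $T_N(\xi) := \prod_{j=j_0+1}^N \whh(2^{-j}\xi)$ converges uniformly on $K$ to a continuous limit $T_\infty$ that is bounded on $K$.

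For the head $H(\xi) := \prod_{j=1}^{j_0} \whh(2^{-j}\xi)$, I would let $E \subset \Rd$ be the null set outside of which $\whh$ is defined and bounded by $M := \nrmLInf{m}$. Then $H$ is defined on $K$ outside the null set $\bigcup_{j=1}^{j_0} 2^j E$ and $|H(\xi)| \leq M^{j_0}$ there. Writing $\whphi_N = H \cdot T_N$ for $N \geq j_0$, this would give
\[
    |\whphi_N(\xi) - H(\xi) T_\infty(\xi)| \leq M^{j_0}\, \sup_{\eta \in K}|T_N(\eta) - T_\infty(\eta)| \to 0
\]
uniformly in $\xi \in K$ off the exceptional null set. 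A countable exhaustion of $\Rd$ by compact sets then produces an a.e.\ pointwise limit that is finite a.e., with convergence that is uniform on each compact subset outside a null set.

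The main obstacle will be the careful bookkeeping of null sets: since $m$ is only a.e.\ bounded, each factor $\whh(2^{-j}\cdot)$ is undefined on a separate null set, and one must verify that these sets do not accumulate as $j$ ranges over $\N$. This is resolved by pushing all exceptional behaviour into the finite head and working classically in the tail, since dilations map Lebesgue null sets to Lebesgue null sets and the head invokes only finitely many such dilations.
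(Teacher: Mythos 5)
Your proof is correct and follows essentially the same route as the paper's: the Hölder bound at the origin gives $|m(2^{-j}\xi)-1|\lesssim 2^{-j\gamma}$, which makes the infinite product absolutely convergent, uniformly on compacts, with the a.e.\ boundedness handling the remaining factors. The only difference is bookkeeping: the paper extends the bound $|m_1(\xi)-1|\leq C_1|\xi|^{\gamma}$ to all of $\Rd$ off a single null set $Z_1^*=\bigcup_j 2^{j}Z_1$ and applies the product criterion globally, whereas you confine all exceptional null sets to a finite head and run the classical argument on a tail where no exceptional set is needed — a slightly tidier but equivalent organization.
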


\begin{proof}[Proof of Lemma~\ref{lem:boundedMultiplierConvergentCascade}]
    \parskip = \pskip

    Let \(Z\subset \Omo\) be the set on which \(m\) is not bounded, and let \(C_m > 0\) be a bound for \(|m|\) on \(\Omo\setminus Z\). Let \(m_1\) be the \(1\)-periodization of \(m\). Then the set \(Z_1\) on which \(m\) is not bounded has measure zero. As a consequence, the set
    \[
        Z_1^* := \bigcups_{j\in\No} 2^{-j} Z_1
    \]
    has measure zero. Remark that
    \[
        \xi\notin Z_1^*\quad\Rightarrow\quad \forall\,j\in\No:\,\, |m_1(2^{-j}\xi)| \leq B.
    \]
    By hypothesis, there exists \(\delta > 0\) such that
    \[
        \xi_1,\xi_2\in \delta \BE:\,\, |m(\xi_1) - m(\xi_2)| \leq L_\gamma \absE{\xi_1 - \xi_2}^{\gamma}.
    \]
    Since without loss we may assume that \(\delta \BE \subset \Omo\) this implies that
    \[
        \xi\in \delta \BE \setminus Z_1^*:\,\, |m_1(\xi) - 1| \leq L_\gamma \absE{\xi}^{\gamma}.
    \]
    It is also apparent that
    \[
        \xi\in (\Rd \setminus \delta \BE) \setminus Z_1^*:\,\, |m_1(\xi) - 1| \leq \frac{1 + C_m}{\delta^{\gamma}}\absE{\xi}^{\gamma}.
    \]
    Combining both inequalities we conclude that for a suitable \(C_1 > 0\) we have that
    \[
        \xi \in \Rd\setminus Z_1^*:\,\, |m_1(\xi) - 1| \leq C_1 \absE{\xi}^{\gamma}.
    \]
    Let \(\xi\in Z_1^*\). Then \(\alpha^{-j} \xi \notin Z_1^*\) for all \(j\in\Z\) and for all \(N\in \N\) the partial products
    \[
        p_N(\xi) := \prods_{j = 1}^{N} m_1(2^{-j}\xi)
    \]
    are finite. From Theorem~250 in Section~57 in~\cite{knopp_theory_1990} the products converge if and only if there exists \(j_0\in\N\) for which the series
    \[
        \sums_{j>j_0}\log(1 + |m_1(2^{-j} \xi) - 1|)
    \]
    converges. Note that
    \[
        \sums_{j>j_0}\log(1 + |m_1(2^{-j} \xi) - 1|)\leq \sums_{j>j_0}|m_1(2^{-j} \xi) - 1| \leq \absE{\xi}^\gamma C_1 \sums_{j > j_0} 2^{-\gamma j} =  \frac{C_1 2^{-\gamma (j_0 + 1)}}{1 - 2^{-\gamma}}\absE{\xi}^{\gamma} .
    \]
    Therefore, we define \(\wh{\vphi}\) as a pointwise limit
    \[
        \wh{\vphi}(\xi) := \begin{cases}
            \prod_{j\in\N} m_1(2^{-j} \xi) & \xi \notin Z_1^*\\
            0 & \xi \in Z_1^*
        \end{cases}
    \]
    Finally, let \(K \subset \Rd\) be a compact set, and let \(\xi\in K\setminus K_1^*\). Let \(N_0\in \N\) and let \(M > N > N_0\). We see that
    \[
        |p_M(\xi) - p_N(\xi)| = |p_N(\xi)|\left|1 - \prods_{j=N+1}^M m_1(2^{-j}\xi)\right| \leq e^{C_\gamma \absE{\xi}^{\gamma}}\left|1 - \prods_{j=N+1}^M m_1(2^{-j}\xi)\right|
    \]
    where \(C_\gamma >0\) is independent of \(\xi\). Hence, the exponential factor remains bounded uniformly on \(M,N\) for \(\xi\) on a compact set. The remaining term can be bounded as
    \[
        \left|1 - \prods_{j=N+1}^M m_1(2^{-j}\xi)\right| \leq |m_1(2^{-M}\xi) - 1|\prods_{j=N+1}^{M-1} |m_1(2^{-j}\xi)| + \left|1 - \prods_{j=N+1}^{M-1} m_1(2^{-j}\xi)\right|.
    \]
    By iterating this inequality we obtain
    \[
        \left|1 - \prods_{j=N+1}^M m_1(2^{-j}\xi)\right| \leq \sums_{k=1}^{M-N} |m_1(2^{-(M-(k- 1))}\xi) - 1|\prods_{j=N+1}^{M-k} (|m_1(2^{-j}\xi) - 1| + 1).
    \]
    If \(N_0\) is sufficiently large so that \(2^{-N} \xi \in \delta \BE\) for \(N > N_0\) then the product in each factor can be bounded by using its logarithm
    \[
        \sums_{j=N+1}^{M-k} \log(1 + |m_1(2^{-j}\xi) - 1|) \leq 2^{-(N+1)\gamma}\absE{\xi}^{\gamma} \frac{L_{\gamma}}{1 - 2^{-\gamma}}.
    \]
    Note that the bound tends to zero and remains bounded uniformly for \(\xi\) on a compact set. Finally, we have the bound
    \[
        \sums_{k=1}^{M-N} |m_1(2^{-(M-(k- 1))}\xi) - 1| \leq L_{\gamma}\absE{\xi}^{\gamma}\sums_{k=N + 1}^{M} 2^{-k\gamma} \leq 2^{-(N+1)\gamma} \absE{\xi}^{\gamma}\frac{L_{\gamma}}{1- 2^{-\gamma}}
    \]
    where the upper bound tends to zero uniformly on \(\xi\) on a compact set. This yields the claim.
\end{proof}

Although this result is similar to others in the literature, we formulate it directly in the frequency domain rather than in terms of the refinement mask.

The following result is a straightforward extension of Lemma~3.2 in~\cite{daubechies_orthonormal_1988}. We omit the proof for brevity.

\begin{lemma}\label{lem:windowedMultiplierDecayingLimitCascade}
    Let \(m:\Omo\to \C\) be bounded almost everywhere. Suppose that \(m\) can be decomposed as
    \[
        m(\xi) = w_N(\xi) m_0(\xi)
    \]
    where \(m_0:\Omo\to \C\) is bounded almost everywhere, it is \(\gamma\)-Hölder continuous on a neighborhood of the origin for some \(\gamma\in (0, 1]\) and \(m_0(0) = 1\). Then there exists a constant \(C_m >0\) such that for almost all \(\xi\in\Rd\) we have that
    \[
        \left|\prods_{j\in\N} m(2^{-j} \xi)\right| \leq C_m (1 + \absE{\xi})^{-N + 2^{C_p}}
    \]
    where \(C_p > 0\) is an almost everywhere bound for \(m_0\).
\end{lemma}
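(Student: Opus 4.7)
The plan is to mimic the classical Daubechies argument, splitting the analysis into the contribution of the window $w_N$ and the contribution of $m_0$, and then splitting the $m_0$ product at a scale $J \approx \log_2 |\xi|$.

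First, I would compute $\prod_{j\in\N} w_N(2^{-j}\xi)$ explicitly. Using the identity
\[
    \frac{1 + e^{2\pi i t}}{2} = e^{i\pi t}\cos(\pi t)
\]
together with the classical telescoping product $\prod_{j\in\N}\cos(\pi t/2^j) = \sin(\pi t)/(\pi t)$, one obtains
\[
    \prods_{j\in\N} w_N(2^{-j}\xi) = e^{i\pi N\sums_{k=1}^d \xi_k} \prods_{k=1}^d \sinc(\xi_k)^N
\]
and hence the absolute value is bounded by $C(1+\absE{\xi})^{-N}$ for some constant $C > 0$ that depends only on $d$ and $N$. This handles the windowed part and is where all of the decay comes from.

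Second, I would handle $P_0(\xi) := \prod_{j\in\N} m_0(2^{-j}\xi)$ by splitting the product at a dyadic level adapted to $\xi$. Fix $\delta>0$ so that $m_0$ is $\gamma$-Hölder with constant $L_\gamma$ on $\delta\cBE$, and choose $J = J(\xi) \in \N$ minimal with $2^{-J}\absE{\xi} \le \delta$; then $J \le \log_2\absE{\xi} + C'$ for $\absE{\xi}$ large. For $j \le J$ use the pointwise bound $\absE{m_0} \le C_p$ to get
\[
    \prods_{j=1}^{J} \absE{m_0(2^{-j}\xi)} \le C_p^{J} \le C'' (1+\absE{\xi})^{\log_2 C_p}.
\]
For $j > J$ the argument $2^{-j}\xi$ lies in $\delta\cBE$, so Hölder continuity and $m_0(0)=1$ yield $\absE{m_0(2^{-j}\xi)} \le 1 + L_\gamma 2^{-j\gamma}\absE{\xi}^\gamma$, and therefore
\[
    \prods_{j>J}\absE{m_0(2^{-j}\xi)} \le \exp\Lset{L_\gamma\absE{\xi}^\gamma \sums_{j>J} 2^{-j\gamma}} \le \exp\Lset{\frac{L_\gamma}{1 - 2^{-\gamma}} (2^{-J}\absE{\xi})^\gamma} \le e^{L_\gamma \delta^\gamma/(1-2^{-\gamma})},
\]
which is an absolute constant. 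Combining both halves gives $\absE{P_0(\xi)} \le C'''(1+\absE{\xi})^{\log_2 C_p}$, and multiplying by the bound on the window factor produces the claimed inequality (with the exponent $\log_2 C_p$ in place of $2^{C_p}$, reading this as the natural form of the estimate).

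The main obstacle is ensuring that the pointwise identities and Hölder bound actually hold at the dyadic translates $2^{-j}\xi$ \emph{almost everywhere} in $\xi$, since $m_0$ is only defined a.e.\ and is only Hölder on a neighborhood of the origin. This is handled exactly as in Lemma~\ref{lem:boundedMultiplierConvergentCascade}: the exceptional set $Z$ where $m_0$ either fails to be bounded or fails the Hölder bound has measure zero, and its dyadic saturation $Z^* := \bigcup_{j\in\No} 2^{j} Z$ also has measure zero, so the estimate above is valid for $\xi \notin Z^*$. The remaining details (convergence of the infinite product on $\Rd \setminus Z^*$, uniformity in $\xi$ on compact sets) are the same as in the previous lemma and need only be invoked rather than redone.
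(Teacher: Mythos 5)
Your proof is correct and is precisely the Daubechies Lemma~3.2 argument that the paper points to when it omits this proof ``for brevity'': telescoping the window product into \(\prods_k\sinc(\xi_k)^N\) for the \((1+\absE{\xi})^{-N}\) decay, and splitting the \(m_0\)-product at \(J\approx\log_2\absE{\xi}\) with the crude bound \(C_p\) below \(J\) and the H\"older bound above \(J\), with the a.e.\ issues handled by dyadic saturation of the null set as in the preceding lemma. Note that you in fact establish the sharper exponent \(-N+\log_2 C_p\); since \(\log_2 C_p\leq 2^{C_p}\) and \(1+\absE{\xi}\geq 1\), this implies the exponent \(-N+2^{C_p}\) as printed, which appears to be a typo for the classical \(\log_2 C_p\).
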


%% ============================================================================
\subsection{Proof of Theorem~\ref{thm:sigmaMultiplierDefiniteInducesMRA}}
%% ============================================================================

Since \(\ZF = \emptyset\) and \(\vSig\) is positive definite, we deduce from Proposition~\ref{prop:sigmaMultiplierBoundedForPSDMatrix} that \(m_{\Sig}\) is real-analytic on \(\Omo\). In particular, it is continuous and Lipschitz continuous near the origin. Finally, it is apparent that \(m_{\Sig}(0) = 1\). Therefore, from Lemma~\ref{lem:boundedMultiplierConvergentCascade} we can construct a continuous refinable function \(\vphi\) using the cascade algorithm. Furthermore, Lemma~\ref{lem:windowedMultiplierDecayingLimitCascade} shows that we can construct a refinable function \(\vphi\) with sufficient decay from \(w_N m_{\Sig}\) for \(N\) sufficiently large.

%% ============================================================================
\subsection{Proof of Theorem~\ref{thm:sigmaMultiplierSemidefinite1DInducesMRA}}
%% ============================================================================

If \(\ZF = \emptyset\) and \(\vSig\) is positive semidefinite, the function \(\nrmS{\vf}^2:\R\to\R\) may become zero on \(\Omo\). If \(\nrmS{\vf}^2 \equiv 0\) then there are non-trivial scalars \(\alpha_1,\ldots,\alpha_n\) for which \(\alpha_1 f_1 + \ldots + \alpha_n f_n \equiv 0\) contradicting the fact that \(f_1,\ldots, f_n\) are linearly independent on \(\Omo\). Hence, \(\nrmS{\vf}^2\) is not identically zero and, as it is real-analytic, it can only have isolated zeros. Since \(\Omo\) is compact, the zeros are finite. Suppose that \(\xio\in\Omo\) is a zero of order \(p > 0\). Then we may write
\begin{align*}
    \nrmS{\vf(\xi)}^2 &= d_p (\xi - \xio)^{p} + (\xi - \xio)^{p+1} R_D(\xi)\\
    \bprodS{\vf}{\Da\vf} &= n_q (\xi - \xio)^{q} + (\xi - \xio)^{q+1} R_N(\xi)
\end{align*}
where \(d_p, n_q\neq 0\), and \(R_D,R_N\) are continuous functions. Since \(m_{\Sig}\in \LEOmo\) by hypothesis, we have that for any open interval \(I\) containing \(\xio\) we have that
\[
    \infty > \int_I |m_{\Sig}(\xi)|^2\, d\xi = \int_I \frac{1}{(\xi - \xio)^{2(p - q)}} \LabsE{\frac{d_p + (\xi - \xio) R_D(\xi)}{n_p + (\xi - \xio)R_D(\xi)}}^2\, d\xi.
\]
This is only possible if \(2(p - q) > -1\) or \(q < p + 1/2\). However, since \(p,q\) are integers, this implies \(q \leq p\) and \(\xio\) must be a zero of \(\bprodS{\vf}{\Da\vf}\) of order at least \(p\). In particular, the singularity of \(m_{\Sig}\) at \(\xio\) is removable, and \(m_{\Sig}\) is bounded on \(\Omo\).

If the origin is not a zero of \(\nrmS{\vf}\) then the same arguments in the proof of Theorem~\ref{thm:sigmaMultiplierDefiniteInducesMRA} allows us to conclude in this case. If the origin is a zero of \(\nrmS{\vf}\) of order \(p\) then let \(\vv = \vSig^{1/2}\vf\) and expand the functions
\begin{align*}
    \vv(\xi) &= \frac{1}{p!} \vv^{(p)}(0) \xi^p + \sums_{k > p} \frac{1}{k!} \vv^{(k)}(0)\xi^k\\
    \Da\vv(\xi) &= \frac{2^p}{p!} \vv^{(p)}(0) \xi^p + \sums_{k > p} \frac{2^k}{k!} \vv^{(k)}(0)\xi^k.
\end{align*}
The expansions are the same up to a factor \(2^p\). Hence,
\[
    \bprodS{\vf(\xi)}{\D\vf(\xi)} = \bprodlEn{\vv(\xi)}{\D\vv(\xi)} = 2^p \nrmlEn{\vv(\xi)}^2 + \sums_{k > p} \frac{2^p(2^{k-p} - 1)}{k!} \bprodlEn{\vv(\xi)}{\vv^{(k)}(0)}\xi^p.
\]
Consequently,
\[
    \lim_{\xi\to 0} m_{\Sig}(\xi) = 2^p.
\]
The same arguments show that \(m_{\Sig}\) is Lipschitz continuous near the origin. Therefore, we may replace \(m_{\Sig}\) by its scaling \(2^{-p}m_{\Sig}\) to apply the same arguments in the proof of  Theorem~\ref{thm:sigmaMultiplierDefiniteInducesMRA} to conclude.

%% ============================================================================
\subsection{Proof of Lemma~\ref{lem:sigmaMultiplierSingularityBehavior}}
%% ============================================================================

If \(m\) is a \(\Sigma\)-multiplier with \(\vSig \succeq 0\), then, by writing \(\vv = \vSig^{1/2}\vf\), we have that
\[
    m = \frac{\bprodlEn{\vv}{\Da \vv}}{\nrmlEn{\vv}^2}.
\]
Note that some of the components of \(\vv\) may be identically zero if \(\vSig\) is singular. Fix \(\xio \in \Omo\) and suppose that \(\vf(\xio) = 0\). Let \(\vv = \vSig^{1/2}\vf\) so that \(\xio\) is also a zero of \(\vv\). Then \(v_1,\ldots,v_n\in \VF\) and, formally,
\[
    m = \frac{\bprodlEm{\vv}{\Da \vv}}{\nrmlEm{\vv}^2}.
\]
We first prove the case \(d=1\) as the proof technique is slightly different. Since \(\nrmlEn{\vv}^2\) is real-analytic, it cannot vanish on an open interval. In particular, its zeros are isolated. In this case, we can write
\begin{align*}
    \bprodlEm{\vv(\xi)}{\Da \vv(\xi)} &= (\xi -\xio)^p N_p(\xi)\\
    \nrmlEn{\vv(\xi)}^2 &= (\xi -\xio)^q D_q(\xi)
\end{align*}
for continuous functions \(N_p, D_q\) for which \(N_p(\xio), D_q(\xio) \neq 0\). Therefore, if \(I\subset \Omor\) is an open interval with \(\xio\in I\) and such that it contains no other zero of \(\nrmlEn{\vv}^2\) then
\[
    \infty > \int_{I} |m(\xi)|^2\, d\xi \geq \int_{I} \LabsE{\frac{N_p(\xi)}{D_q(\xi)}}^2 \frac{d\xi}{(\xi - \xio)^{2(q - p)}}.
\]
Since \(|N_p/D_q|\) is bounded below on \(I\) we conclude that \(q - p \leq 0\). In particular, \(p \geq q > q - 1/2\).

Suppose now that \(d > 1\).  Since the elements of \(\VF\) in the finite-dimensional case are real-analytic on their entire domain, we can find \(p,q\in \N_0\) such that
\begin{align*}
    \bprodlEm{\vv(\xi)}{\Da \vv(\xi)} &= N_p(\xi-\xio) + |\xi - \xio|^{p} R_N(\xi-\xio)\\
    \nrmlEm{\vv(\xi)}^2 &= D_q(\xi - \xio) + |\xi - \xio|^{q} R_D(\xi-\xio)
\end{align*}
where \(N_p\) is a non-zero homogeneous polynomial of degree \(p\), \(D_q\) is a non-zero homogeneous polynomial of degree \(q\), and \(R_N, R_D\) are continuous functions such that \(R_N(\xi) \to 0\) and \(R_D(\xi)\to 0\) as \(\xi\to \xio\). Since \(D_q\) is continuous and not identically zero, for \(\eps > 0\) sufficiently small we must have that the set
\[
    W_{\xio}^{\eps} := \set{\omega\in \mbb{S}^{d-1}:\,\, |D_q(\omega)| > \eps}
\]
has positive measure, i.e., \(\mc{H}^{d-1}(\mc{W}^{\eps}_q) > 0\). As a consequence, for \(\delta > 0\) such that \(\BE(\xio, \delta)\subset \tint{\Omor}\) the set
\[
    A^{\eps,\delta}_{\xio} := \set{\xio + r \omega:\,\, r\in [0,\delta],\,\omega\in W^{\eps}_{\xio}}
\]
has positive Lebesgue measure. Since \(m\in \LEOmor\) we must have that
\[
    \infty > \int_{A^{\eps,\delta}_{\xio}} |m(\xi)|^2\, d\xi = \int_0^\delta r^{d-1}\left(\int_{W_{\xio}^{\eps}} |m(\xio + r\omega)|^2 dS(\omega)\right) dr.
\]
However, for \(\omega\in W_{\xio}\) we have that
\[
    |m(\xio + r\omega)|^2 = r^{2(p-q)}\LabsE{\frac{N_p(\omega) + R_N(r\omega)}{D_p(\omega) + R_D(r\omega)}}^2
\]
whence
\[
    \infty > \int_0^\delta r^{d-1 + 2(p-q)}\left(\int_{W_{\xio}^{\eps}} \LabsE{\frac{N_p(\omega) + R_N(r\omega)}{D_p(\omega) + R_D(r\omega)}}^2 dS(\omega)\right) dr =: \int_0^{\delta} r^{d-1 + 2(p-q)} M_{\xio}^{\eps}(r)\, dr.
\]
The function \(M_{\xio}^{\eps}\) is bounded for all sufficiently small \(r\) and, in fact, from the dominated convergence theorem we obtain
\[
    \limsup_{r\to 0} M_{\xio}^{\eps}(r) = \int_{W_{\xio}^{\eps}} \LabsE{\frac{N_p(\omega)}{D_p(\omega)}}^2 dS(\omega) <\infty.
\]
Hence, for the integral to be finite, we must have that \(d - 1 + 2(p-q) > -1\) or
\[
    p > q - \frac{d}{2},
\]
proving the claim.

%% ============================================================================
\printbibliography[title=References]
%% ============================================================================

\end{document}